\documentclass [12pt,a4paper]{article}
 \usepackage{amssymb}
 \usepackage{amsmath,amsthm}
 \usepackage{mathrsfs}
\usepackage[numbers,sort&compress]{natbib}
 \usepackage{graphicx}
\usepackage{tikz}
 \usepackage[colorlinks=true,citecolor=blue]{hyperref}
 \usepackage{bbm}

\setlength{\textwidth}{167mm}
\setlength{\textheight}{230mm}
\setlength{\topmargin}{0pt}
\setlength{\headsep}{0pt}
\setlength{\headheight}{0pt}
\setlength{\oddsidemargin}{0pt}
\setlength{\evensidemargin}{0pt}
\setlength{\parskip}{0.5ex plus 0.3ex minus 0.3ex}
\normalsize

\renewcommand{\dfrac}[2]{\lower0.15ex\hbox{\large$\textstyle\frac{#1}{#2}$}}

\newtheorem{theorem}{Theorem}[section]
\newtheorem{lemma}[theorem]{Lemma}

\newtheorem{corollary}[theorem]{Corollary}
\newtheorem{definition}[theorem]{Definition}

\newtheorem{remark}[theorem]{Remark}

\numberwithin{equation}{section}

\begin{document}
\title 
{\bf On the number of linear uniform hypergraphs with linear girth constraint}
\author
{{Fang Tian$^1$\quad Yiting Yang$^2$\quad Xiying Yuan$^3$}\\ 
{\small $^1$Department of Applied Mathematics}\\%
{\small Shanghai University of Finance and Economics, Shanghai, 200433, China} \\
{\small\tt tianf@mail.shufe.edu.cn}\\[1ex]
{\small $^2$School of Mathematical Sciences}\\
{\small Tongji University, Shanghai, 200092, China}\\
{\small\tt ytyang@tongji.edu.cn}\\
{\small $^3$Department of Mathematics}\\
{\small Shanghai University, Shanghai, 200444, China}\\
{\small\tt xiyingyuan@shu.edu.cn}
}
\date{}
 \maketitle


\begin{abstract}
For an integer $r\geqslant 3$, a hypergraph on vertex set $[n]$ is $r$-uniform
 if each edge is a set of $r$ vertices, and is said to be linear if every 
 two distinct edges share at most one vertex. Given a family 
 $\mathcal{H}$ of linear $r$-uniform hypergraphs,
 let $\textmd{Forb}_r^L(n,\mathcal{H})$ be the set of linear $r$-uniform hypergraphs
  on vertex set $[n]$, which does not contain any member from $\mathcal{H}$ as a subgraph.
  An $r$-uniform linear cycle of length $\ell$, denoted by  $C_\ell^r$, is a 
  linear $r$-uniform hypergraph on $(r-1)\ell$ vertices whose edges can be ordered as 
 $\boldsymbol{e}_1,\ldots,\boldsymbol{e}_\ell$ such that 
 $|\boldsymbol{e}_i\cap \boldsymbol{e}_j|=1$ if $j=i\pm 1$ (indices taken modulo $\ell$)
  and $|\boldsymbol{e}_i\cap \boldsymbol{e}_j|=0$ otherwise. The 
  \textit{linear girth} of a linear $r$-uniform hypergraph
is the smallest integer $\ell$ such that it contains a $C_\ell^r$.
Let $\textmd{Forb}_L(n,r,\ell)=\textmd{Forb}_r^L(n,\mathcal{H})$  when
$\mathcal{H}=\{C_i^r:\, 3\leqslant i\leqslant \ell\}$, that is, 
$\textmd{Forb}_L(n,r,\ell)$ is the set of all linear $r$-uniform hypergraphs on $[n]$
with linear girth greater than $\ell$. For integers $r\geqslant 3$
and $\ell\geqslant 4$, Balogh and Li [On the number of linear hypergraphs
 of large girth, J. Graph Theory, {\bf 93}(1) (2020), 113-141] 
showed that
$|\textmd{Forb}_L(n,r,\ell)|= 2^{O(n^{1+1/\lfloor \ell/2\rfloor})}$
 based on the graph container method. 
It is natural to obtain 
$|\textmd{Forb}_L(n,r,\ell)|\geqslant 2^{c\cdot n^{1+1/\ell}}$
for some constant $c$ by probabilistic deletion method. 
Combined with the known results that
$|\textmd{Forb}_L(n,r,3)|= 2^{o (n^{2})}$
and $|\textmd{Forb}_L(n,3,4)|= 2^{\Theta (n^{3/2})}$, 
by analyzing the  random greedy high {linear} girth linear $r$-uniform hypergraph process,
we show 
$|\textmd{Forb}_L(n,r,\ell)|\geqslant 2^{n^{1+1/(\ell-1)-O(\log\log n/\log n)}}$
for every pair of fixed integers $r,\ell\geqslant 4$, or $r= 3$ and $\ell\geqslant 5$.
\end{abstract}

\hskip 10pt{\bf Keywords:}\ linear hypergraphs, linear Turán number, 
linear cycles, linear girth, random greedy graph process.

\hskip 10pt {\bf Mathematics Subject Classifications:}\ 05C35,05C65,05C80


\section{Introduction}

For an integer $r\geqslant 3$,
 a hypergraph $H=(V_H,E_H)$ on vertex set $[n]$ is an
 \textit{$r$-uniform hypergraph} (\textit{$r$-graph} for short) if each edge 
  is an $r$-element subset (\textit{$r$-set} for short) of $[n]$,
 and is said to be \textit{linear} if any two distinct edges share at most
 one vertex. Write ${v}_H$ and ${e}_H$ to denote the number of vertices and 
 edges of $H$. An $r$-graph $H$ is called a partial Steiner $(n,r,s)$-system
 for an integer $s$ with $2\leqslant s\leqslant r-1$,
 if every $s$-set lies in at most one 
 edge of it. In particular,  partial Steiner $(n,r,2)$-systems are
 linear $r$-graphs, partial Steiner $(n,3,2)$-systems are partial 
 Steiner triple systems. 
Given a family $\mathcal{H}$ of linear $r$-graphs, define the 
\textit{linear Turán number} of $\mathcal{H}$, denoted by 
 $\textmd{ex}_r^L(n,\mathcal{H})$, is the maximum 
number of edges among  linear $r$-graphs on vertex set $[n]$ that 
 contain no $r$-graphs of $\mathcal{H}$ as subgraphs ($\mathcal{H}$-\textit{free} for short). 
 Recently, the research on the Turán problem in linear hypergraphs has attracted 
 considerable attention. Let
  $\textmd{Forb}_r^L(n,\mathcal{H})$ be the set 
 of all  linear $\mathcal{H}$-free $r$-graphs on
vertex set $[n]$. 
When $\mathcal{H}$ consists of a single linear
 $r$-graph $H$, we simply write them
as $\textmd{ex}_r^L(n,{H})$  and 
$\textmd{Forb}_r^L(n,{H})$ instead. The Tur\'{a}n problem and the enumeration problem 
for a family $\mathcal{H}$ of $r$-graphs are closely related. Every subgraph of a $\mathcal{H}$-free graph
is also $\mathcal{H}$-free, then a trivial bound on the size of  $\textmd{Forb}_r^L(n,\mathcal{H})$
is given in~\cite{balgoh17} as follows
\begin{align}
2^{\textmd{ex}_r^L(n,\mathcal{H})}\leqslant |\textmd{Forb}_r^L(n,\mathcal{H})|\leqslant 
\sum_{i\leqslant \textmd{ex}_r^L(n,\mathcal{H})}\binom{\binom{n}{r}}{i}\leqslant 2n^{r\cdot \textmd{ex}_r^L(n,\mathcal{H})}.
\end{align}

 There are several different notions of hypergraph cycles.
  For an integer $\ell\geqslant 3$, a Berge cycle $B_\ell^r$
  of length $\ell$ is an alternating sequence of distinct vertices and distinct 
  edges of the form ${v}_1,\boldsymbol{e}_1,{v}_2,\boldsymbol{e}_2,\ldots,{v}_\ell,\boldsymbol{e}_\ell$,
  where ${v}_i,{v}_{i+1}\in \boldsymbol{e}_i$ for each $i\in[\ell-1]$ and ${v}_\ell,{v}_1\in \boldsymbol{e}_\ell$.
 This definition
  of a hypergraph cycle is the classical definition due to Berge.
    Note that forbidding a Berge cycle $B_\ell^r$ actually forbids a family of hypergraphs.
    Another notion of hypergraph cycles that has been actively investigated 
  is that of a linear cycle, which is unique up to isomorphism. An $r$-uniform linear cycle  $C_\ell^r$ of length $\ell$ 
 is an alternating sequence ${v}_1,\boldsymbol{e}_1,{v}_2,\boldsymbol{e}_2,\ldots,{v}_\ell,\boldsymbol{e}_\ell$ of distinct vertices and distinct 
  edges such that $\boldsymbol{e}_i\cap \boldsymbol{e}_{i+1}=\{{v}_{i+1}\}$ for each $i\in[\ell-1]$,
   $\boldsymbol{e}_1\cap \boldsymbol{e}_\ell=\{{v}_1\}$ and $\boldsymbol{e}_i\cap \boldsymbol{e}_j=\emptyset$ if $1<|j-i|<\ell-1$. 
The \textit{linear girth} of a linear $r$-graph is the smallest integer $\ell$ 
such that it contains a $C_\ell^r$. If no such $\ell$ exists, 
the {linear} girth of the linear $r$-graph is defined to be infinite.

Determining $\textmd{ex}_3^L(n,C_3^3)$ is equivalent to the 
famous $(6,3)$-problem, which aims at determining the maximum number of edges
of triple systems not carrying three edges on six vertices. The $(6,3)$-problem is studied 
by Brown, Erd\H{o}s and S\'{o}s~\cite{brown73}, and it is known that for some constant $c>0$,
\begin{align*}
n^{2- {c}{\sqrt{\log n}}}<\textmd{ex}_3^L(n,C_3^3)=o(n^2),
\end{align*}
where the lower bound is given by Behrend~\cite{behrend46} and the upper bound is given by 
Rusza and Szemer\'{e}di~\cite{ruz78}. 
Erd\H{o}s, Frankl and R\"{o}dl~\cite{erd86}
showed that for every $r\geqslant 3$, $\textmd{ex}_r^L(n,C_3^r)=o(n^2)$ and 
$\textmd{ex}_r^L(n,C_3^r)>n^{2-o(1)}$. 
Using the so-called $2$-fold Sidon sets,
Lazebnik and  Verstra\"{e}te~\cite{laz03} constructed linear $3$-graphs with girth $5$
and showed that  
\begin{align*}
\textmd{ex}_3^L\bigl(n,\{B_3^3, B_4^3\}\bigr)= \frac{1}{6}n^{  \frac{3}{2}}+O(n).
\end{align*}
Kostochka, Mubayi and Verstra\"{e}te asked if for integers $r\geqslant 3$ and $\ell\geqslant 4$,
\begin{align*}
\textmd{ex}_r^L\bigl(n,C_\ell^r\bigr)=\Theta\bigl(n^{1+ \frac{1}{\lfloor \ell/2\rfloor}}\bigr).
\end{align*} 
Later, Collier-Cartaino, Graber and Jiang~\cite{coll14} 
proved that $\textmd{ex}_r^L(n,C_\ell^r)=O(n^{1+ \frac{1}{\lfloor \ell/2\rfloor}})$.
 In detail,
they obtained there exist positive constants $c(r, k)$ and $d(r, k)$
 such that 
 \begin{align*}
 \textmd{ex}_r^L(n,C_{2k}^r)\leqslant c(r,k)n^{1+ \frac{1}{k}}\ \text{and }
 \textmd{ex}_r^L(n,C_{2k+1}^r)\leqslant d(r,k)n^{1+ \frac{1}{k}},
 \end{align*}
where the integer $k\geqslant 2$, the constants $c(r, k)$ and $d(r, k)$ are 
exponential in $k$ for fixed $r$.
As an immediate corollary of the main results in~\cite{jiang30}, Jiang, Ma and Yepremyan 
further improved the coefficient $c(r, k)$ to be linear in $k$. 
Ergemlidze et al.~\cite{ergem2019} strengthened some results
in $3$-graphs to be
$\textmd{ex}_3^L(n, B_4^3)= \frac{1}{6}n^{  3/2}+O(n)$, $\textmd{ex}_3^L(n, B_5^3)= \frac{1}{3\sqrt{3}}n^{  3/2}+O(n)$,
and for $k=2,3,4$ and $6$,
\begin{align*}
\textmd{ex}_3^L\bigl(n,\{C_3^3,C_5^3,\ldots,C_{2k+1}^3\}\bigr)= \Theta\bigl(n^{1+ \frac{1}{k}}\bigr).
\end{align*}
In general,  for $k\geqslant 2$, they also showed that 
\begin{align*}
\textmd{ex}_3^L\bigl(n,\{C_3^3,C_5^3,\ldots,C_{2k+1}^3\}\bigr)= \Omega\bigl(n^{1+ \frac{2}{3k-4+\epsilon}}\bigr),
\end{align*}
where $\epsilon=0$ if $k$ is odd and $\epsilon=1$ if $k$ is even.
As a special case of the linear hypergraph extension of K\H{o}v\'{a}ri–S\'{o}s–Tur\'{a}n's theorem in~\cite{gao2021},
Gao and Chang showed that $\textmd{ex}_3^L(n, C_4^3)= \frac{1}{6}n^{  3/2}+O(n)$, 
and Gao et al. in~\cite{gao2023} further proved that $\textmd{ex}_3^L(n,C_5^3)= \frac{1}{3\sqrt{3}}n^{  3/2}+O(n)$.
For the lower bound of $\textmd{ex}_r^L(n,C_\ell^r)$, the best known lower bound  for $r=3$ is 
$\textmd{ex}_3^L\bigl(n,C_\ell^3\bigr)=\Omega(n^{1+ \frac{1}{\ell-1}})$
that was mentioned in~\cite{coll14,ergem2019} due to Verstra\"{e}te, by taking a random subgraph of a Steiner
triple system. 
The lower bound on the linear Tur\'{a}n number of linear cycles is still
far from what is conjectured, especially in a higher uniformity $r\geqslant 4$,
 which is an area worth some exploration~\cite{ergem2019}.
Sometimes to explore the lower bound of the corresponding
Tur\'{a}n problem is more difficult than the upper one.

Following the same logic with the Tur\'{a}n problems of cycles, 
for integers $r\geqslant 3$ and $\ell\geqslant 4$, Balogh and Li~\cite{balgoh17} conjectured that 
$ |\textmd{Forb}_r^L(n,C_\ell^r)|=2^{\Theta(n^{1+1/\lfloor \ell/2\rfloor})}$.
For $\ell=3$, the work of Erd\H{o}s, Frankl and R\"{o}dl~\cite{erd86} 
could be extended to show that $|\textmd{Forb}_r^L(n,C_3^r)|=2^{o(n^{2})}$. 
Balogh and Li~\cite{balgoh17} confirmed the conjecture in the case of $\ell=4$. In
fact, they proved that for every $r\geqslant 3$ there exists $C=C(r)>0$ such
that $|\textmd{Forb}_r^L(n,C_4^r)|\leqslant 2^{C\cdot n^{3/2}}$. The upper bound for $C_4^3$ is sharp
in order of magnitude given by $\textmd{ex}_r^L(n,C_4^3)=\Theta(n^{3/2})$ and (1.1).
In general, for $r\geqslant 3$ and $\ell\geqslant 4$, they provided a result on the girth version, that is,
there exists a constant $c=c(r,\ell)$ such that 
the number of linear $r$-graphs with {linear} girth greater than $\ell$ is at most 
$2^{c\cdot n^{1+1/\lfloor \ell/2\rfloor}}$. 
Define $\textmd{ex}_L(n,r,\ell)$ and $\textmd{Forb}_L(n,r,\ell)$
as  $\textmd{ex}_r^L(n,\mathcal{H})$ and $\textmd{Forb}_r^L(n,\mathcal{H})$ of
$\mathcal{H}=\{C_i^r:\, 3\leqslant i\leqslant \ell\}$, respectively.
Hence, it implies that 
$|\textmd{Forb}_L(n,r,\ell)|=2^{O(n^{1+1/\lfloor \ell/2\rfloor})}$. 
They stated that
the upper bounds are possibly sharp, while they are not able to confirm it now.
The idea of the proof in~\cite{balgoh17} is to reduce the 
 hypergraph enumeration problems to some graph enumeration problems, and 
then the graph container method is implemented in them.
Balogh, Narayanan and Skokan~\cite{bal18} made use of 
the hypergraph container method and provided a balanced supersaturation theorem
for linear cycles to  resolve a conjecture on the 
enumeration of $r$-uniform hypergraphs with a forbidden $C_\ell^r$.

In order to solve a conjecture of Erd\H{o}s about the existence
of sparse Steiner triple systems,
Glock, K\"{u}hn, Lo and Osthus~\cite{glock20}, and 
Bohman and Warnke~\cite{boh19}, 
studied a constrained triple process along 
similar lines, which can be
seen as a generalization of the \textit{random greedy triangle removal process}~\cite{bohman15},
to iteratively build a sparse partial Steiner triple system,
that is,
adding the sparseness constraint does not affect 
the evolution of the process significantly.
To be precise, a Steiner triple system is called $\ell$-sparse,
or girth greater than $\ell$, if it does not
contain any $(i+2,i)$-configurations for integers $2\leqslant i\leqslant \ell$,
where a $(i+2,i)$-configuration 
is a set of $i$ triples  which span at most $i+2$ vertices.
Erd\H{o}s made his conjecture
in Steiner triple systems.
 Kwan, Sah, Sawhney and Simkin~\cite{kwan2022} proved Erd\H{o}s’s conjecture with proof
   built upon the approximate results of~\cite{boh19,glock20} 
  and the iterative absorption method, combined with many new ideas.
Erd\H{o}s's conjecture for Steiner systems in higher uniformities 
was conjectured in several papers, such as
~\cite{fure13,glock20,kee20}.
Delcourt and Postle~\cite{delcourt2022} and independently Glock,
Joos, Kim, K\"{u}hn and Lichev~\cite{glock2024}
settled approximate versions of the conjecture
made in~\cite{fure13,glock20,kee20}. 
Both approaches offer different benefits, and each has led to substantial subsequent work. 
In a very recent breakthrough,
   Delcourt and Postle~\cite{delcourt2024} proved Erd\H{o}s’s conjecture for general Steiner systems.

The
 \textit{random greedy $r$-clique removal process} starts with a complete graph
 on vertex set $[n]$, denoted as $\mathbb{G}(0)$,
 and $\mathbb{G}(i+1)$ is the remaining graph from $\mathbb{G}(i)$ by
 selecting one $r$-clique uniformly at random out of all $r$-cliques in $\mathbb{G}(i)$
 and deleting all its edges from the edge set ${E}(i)$ of $\mathbb{G}(i)$.
 The process terminates once the remaining graph contains no $r$-cliques,
 and along the way produces a linear $r$-graph, where it starts with the 
 empty $r$-graph on vertex set $[n]$, iteratively adding the vertex set of each 
 chosen $r$-clique as a new edge in the hypergraph.  
 Let $M=\min\{i\,:\mathbb{G}(i)\ {\mbox{is\ } r{\mbox {-clique free}}}\}$,
 and  ${E}(M)$ be the set of edges left unsaturated 
 by the produced $r$-cliques, 
 which are related via
 \begin{align*}
 |E(M)|= \binom{n}{2}- \binom{r}{2}M.
 \end{align*}
 In particular, this problem attracted much 
 attention when $r=3$ and it is also called the random greedy triangle removal process~\cite{bohman15}.  
Fewer results are known on $r\geqslant 4$.
Bennett and Bohman~\cite{bennett15} considered the random greedy
hypergraph matching process, which can be seen as a generalization
of the $r$-clique removal process. A special case of the conjecture proposed
in~\cite{bennett15} is \textit{w.h.p.} $|E(M)|=n^{2- \frac{2}{r+1}+o(1)}$.
 Their upper bound still
has not the correct order of magnitude. Without the appropriate substructures,
it seems impossible to rely on self-correcting behavior to determine
the order of $|E(M)|$.
Tian et al.~\cite{tian23} directly discussed the structure 
of the random greedy $r$-clique removal process to the same natural barrier
with the one in~\cite{bennett15}, that is, $|E(M)|\leqslant n^{2- \frac{1}{r(r-1)-2}+o(1)}$.
 This upper bound on $|E(M)|$ equals $n^{ \frac{7}{4}+o(1)}$
when $k=3$, coinciding with an upper bound obtained  in~\cite{grable97,bohman101}
for random triangle packing.
Joos and K\"{u}hn~\cite{joos2025} finally solved
this conjecture in a significantly strong form. 
Therefore, the maximum number of edges in a linear $r$-graph on $n$ vertices lies
between $\binom{n}{2}/ \binom{r}{2}-O(n^{2- \frac{2}{r+1}+o(1)})$ and 
$ \binom{n}{2}/ \binom{r}{2}$. 

By classical probabilistic deletion method, 
for every pair of integers $r,\ell\geqslant 3$, it is natural to prove that there exists a linear $r$-graph on vertex set $[n]$
that has at least $c\cdot n^{1+ \frac{1}{\ell}}$ edges for some constant $c$, such as $c= \frac{1}{2r(r-1)}$, and the linear girth
greater than $\ell$, which implies that
$\textmd{ex}_L(n,r,\ell)\geqslant c\cdot n^{1+ \frac{1}{\ell}}$ and 
$|\textmd{Forb}_L(n,r,\ell)|\geqslant 2^{c\cdot n^{1+ \frac{1}{\ell}}}$. 
In fact, it also shows that $\textmd{ex}_r^L(n,C_\ell^r)\geqslant c\cdot n^{1+ \frac{1}{\ell}}$
and $ |\textmd{Forb}_r^L(n,C_\ell^r)|
\geqslant 2^{c\cdot n^{1+ \frac{1}{\ell}}}$.
We choose a random subgraph of a linear
$r$-graph, where each edge is generated independently with probability
$p=n^{-\alpha}$ with $\alpha= 1- \frac{1}{\ell}$. The expected number of edges and
the expected number of linear cycles of length
no more than $\ell$ are approximately $ \frac{1}{r(r-1)}n^{2-\alpha}$
and $\Theta(n^{\ell(1-\alpha)})$, respectively. 
We delete one edge from each of these linear cycles to obtain one new linear $r$-graph such that
 the linear girth of it is greater than $\ell$,
and the number of edges in it is at least $\frac{1}{2r(r-1)}n^{1+ \frac{1}{\ell}}$, 
while we can not have the number of edges is $o(n^{1+ \frac{1}{\ell-1}})$ 
by taking $p=o(n^{- \frac{\ell-2}{\ell-1}})$ in this way.

Inspired by the ideas in addressing Erd\H{o}s conjecture for Steiner systems, especially the ones in~\cite{boh19,glock20},
we considered the constrained random greedy $r$-clique removal process  to
enumerate $|\textmd{Forb}_L(n,r,\ell)|$. 
As stated in~\cite{joos2025}, such random processes are easy to formulate, in many cases however,
a precise analysis is challenging. The central questions often concern structural 
properties that typically hold for the objects generated at termination.
For integers $r\geqslant 3$ and $\ell\geqslant 3$,
 the constrained process
does not just produce a linear $r$-graph but with linear girth greater than $\ell$,
 and the corresponding process is called  the \textit{random greedy high linear-girth $r$-clique removal process}
 or \textit{high linear-girth linear $r$-uniform hypergraph process}.
Compared with their analysis in~\cite{glock20,boh19} discussing sparse Steiner triple systems, 
the challenge 
arising in the analysis of random greedy high linear-girth $r$-clique removal process is that 
when each individual 
forbidden configuration $C_i^r$ with $3\leqslant i\leqslant \ell$ is considered,
it is more sensitive and stricter to establish 
 the trajectories and error functions of an ensemble
 of random variables which we track in  a higher uniformity,
such that the one-step
 changes, trend hypotheses and boundedness hypotheses of these tracked variables can 
 support the proof about the concentration of all these variables to the end.  
 Finally, we show the following theorem.
 \begin{theorem}
For every pair of fixed integers $r,\ell\geqslant 3$, consider the random 
greedy high linear-girth $r$-clique
removal process on vertex set $[n]$. Let $M$  be the number of edges in the
generated linear $r$-graph with linear girth greater than $\ell$ when the process terminates. 
With high probability, there exists some positive constant $\lambda=\lambda(\ell)$ 
 such that
\begin{align*}
M\geqslant n^{1+ \frac{1}{\ell-1}-\lambda \frac{\log\log n}{\log n}}.
\end{align*}
 \end{theorem}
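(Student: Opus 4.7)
The plan is to analyze the random greedy high girth $r$-clique removal process by the differential equation method (DEM), following the framework developed for the constrained triple process by Bohman and Warnke~\cite{boh19} and by Glock, K\"{u}hn, Lo and Osthus~\cite{glock20}. For each $i$ I track the set of \emph{open} $r$-cliques, namely those $r$-cliques $K$ in $\mathbb{G}(i)$ whose vertex set can be added as a new hyperedge of the current hypergraph $H(i)$ without creating any $C_j^r$ with $3 \leqslant j \leqslant \ell$. Let $Q(i)$ be their number. The process terminates at the random step $M$ when $Q(i) = 0$, and the aim is to show $M \geqslant n^{1+1/(\ell-1) - \lambda \log\log n / \log n}$ w.h.p.\ by rescaling time as $t = i / n^{1+1/(\ell-1)}$ and identifying deterministic trajectories $\widehat{Q}(t)$, $\widehat{e}(t)$, together with $\widehat{X}_F(t)$ for a family of extension variables $X_F$.

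The core of the argument is to write self-consistent ODEs for a hierarchy of auxiliary variables. Besides $Q$ and $e(\mathbb{G}(i))$, for every rooted ``near-cycle'' hypergraph $F$ obtained from some $C_j^r$ with $3 \leqslant j \leqslant \ell$ by removing one edge and designating its vertex set as roots, I define $X_F(i)$ to be the number of copies of $F$ in $H(i)$ whose roots form an $r$-clique of $\mathbb{G}(i)$. Each such embedding is precisely an $r$-clique that is \emph{closed} by template $F$, so $Q(i)$ is lower-bounded by the total $r$-clique count minus $\sum_{j=3}^{\ell} \sum_F X_F(i)$. A single step changes each $X_F$ in two ways: some copies are destroyed because the edges of the chosen $r$-clique are removed from $\mathbb{G}$, and new copies are created because the newly added hyperedge plays the role of one of the hyperedges of $F$. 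Computing expected one-step changes and matching them with derivatives of the trajectories yields a coupled ODE system that can be solved inductively by the order of vertex count of $F$.

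For concentration I would deploy the standard two-sided supermartingale scheme. For each tracked variable $Y$ let $\widehat{Y}(t)$ be its predicted trajectory and $\epsilon_Y(t) = \widehat{Y}(t) \cdot n^{-\delta_Y} g(t)$ a self-chosen error envelope, and consider the processes
\begin{align*}
Y^{\pm}(i) = \pm\bigl(Y(i) - \widehat{Y}(i/n^{1+1/(\ell-1)})\bigr) - \epsilon_Y(i/n^{1+1/(\ell-1)}).
\end{align*}
Trend hypotheses, derived from the expected one-step changes together with the inductive assumption that all previously-ordered variables are still on-trajectory, show that $Y^{\pm}$ is a supermartingale up to the stopping time at which some variable leaves its envelope. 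Boundedness hypotheses follow from deterministic codegree bounds in the near-linear hypergraph under construction, since any two hyperedges share at most one vertex. Freedman's inequality then closes the induction, with the union bound over steps and the $O(1)$ templates easily absorbed. The interval on which the argument is kept alive is $t \in [0,\,n^{-\lambda \log\log n / \log n}]$, which translates precisely to the advertised lower bound on $M$.

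The dominant technical obstacle, as in~\cite{boh19, glock20}, is simultaneously handling \emph{all} rooted extension templates coming from $C_3^r, \ldots, C_\ell^r$: trend hypotheses for a high-complexity extension variable rely on trend hypotheses for lower-complexity ones, and the error envelopes must be chosen so that error propagation does not defeat self-consistency at the density barrier $n^{1+1/(\ell-1)}$. In contrast to the $r = 3$ constrained triple process of~\cite{boh19, glock20}, which only needs one family of configurations (the $(j+2, j)$-configurations), here every $j \in \{3, \ldots, \ell\}$ contributes a distinct tower of templates, and the $r$-clique removal mechanism couples edge-removal and hyperedge-addition dynamics in a non-trivial way. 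Once this hierarchy is set up and the DEM machine runs, the lower bound $M \geqslant n^{1+1/(\ell-1) - O(\log\log n / \log n)}$ follows from the fact that $\widehat{Q}(t) > 0$ throughout the stated interval.
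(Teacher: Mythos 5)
Your overall strategy coincides with the paper's: run the constrained removal process, track the number of available (``open'') $r$-cliques together with a hierarchy of partial copies of the forbidden cycles rooted at available cliques, guess trajectories by the pseudo-random heuristic, and close the argument with two-sided supermartingales and Freedman's inequality on the interval where the survival factor stays bounded away from $0$; the different time rescaling is immaterial. Two steps, however, are not merely underspecified but would fail as written. First, your tracked ensemble on the graph side consists only of $Q(i)$ and $e(\mathbb{G}(i))$. The drift of $Q$ (and of every $X_F$) contains the term counting available cliques that stop being cliques of $\mathbb{G}(i+1)$ because they share at least one pair with the selected clique; evaluating this requires concentration of the number of available $r$-cliques extending every $m$-set for all $2\leqslant m\leqslant r-1$ (the paper's codegree variables $Y_{\boldsymbol{f}_m}$ and the inclusion--exclusion of Claims 4.1.1--4.1.2). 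The global edge count does not yield these local quantities, so for $r\geqslant 4$ your ODE system does not close. Relatedly, your templates $X_F$ record only that the roots form a clique of $\mathbb{G}(i)$ and that the remaining edges lie in $H(i)$; the paper's $W_{\boldsymbol{f},L,k}$ additionally require the $e_L-k$ unfilled slots to be \emph{available} cliques. Since only available cliques can be selected, the creation rate of a level-$k$ template is proportional to the level-$(k-1)$ count only under that availability condition, so the hierarchy must carry it.

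Second, the claim that the boundedness hypotheses ``follow from deterministic codegree bounds \ldots since any two hyperedges share at most one vertex'' is wrong. The worst-case one-step changes, and the overcounting corrections in the drift (one selection closing a clique via two distinct configurations, or completing two templates at once --- the paper's $\Upsilon$, $\Psi$, $\Lambda$, $\Pi$, $\Phi$), are controlled by a separate probabilistic extension-counting lemma (Lemma 5.1): a conditional moment/Markov argument giving, with probability $1-o(n^{-\mu})$, bounds of the shape $n^{\epsilon}\max_{\widetilde H} n^{[v_H-(r-1)e_H]+[(r-1)e_{\widetilde H}-v_{\widetilde H}]}(nt)^{e_H-e_{\widetilde H}}$ for the number of rooted copies of any small configuration. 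Linearity alone gives bounds that are polynomially too large, and without the resulting $n^{\alpha'}$-type control on $V$ in Freedman's inequality the exponential tail does not beat the union bound over the polynomially many tracked variables and steps. This is the most delicate part of the argument and cannot be dismissed as deterministic.
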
 
\noindent 
According to the proof of Theorem~1.1 in Section~4, we will show the term $O({\log\log n}/{\log n})$ 
 can not be removed from the exponent of
 the lower bound of
 $M$ and it is sufficient in our proof to show the one-step
 changes, trend hypotheses and boundedness hypotheses of these tracked variables
 satisfy the conditions in Freedman’s
martingale concentration inequality. Furthermore, we will show that applying the
random greedy process analysis to obtain a better lower bound of $M$ 
than $\Omega(n^{1+ \frac{1}{\ell-1}})$ is impossible. 
 We believe removing the term $O({\log\log n}/{\log n})$ in
 Theorem~1.1 entails some new substructures and analysis.

Combined with the known results that
$|\textmd{Forb}_L(n,r,3)|= 2^{o (n^{2})}$ 
 and 
 $|\textmd{Forb}_L(n,3,4)|= 2^{\Theta (n^{3/2})}$ extended from~\cite{balgoh17,erd86}, in this paper,
as a corollary of Theorem 1.1, it is natural to  obtain
that 
\begin{corollary} 
For every pair of fixed integer $r,\ell\geqslant 4$, or $r=3$ and $\ell\geqslant 5$,
there exists $\lambda=\lambda(\ell)>0$ such that as $n\rightarrow\infty$,  
\begin{align*}
\textmd{ex}_L(n,r,\ell)\geqslant n^{1+ \frac{1}{\ell-1}- \frac{\lambda\log\log n}{\log n}}\quad \text{and}\quad 
|\textmd{Forb}_L(n,r,\ell)|\geqslant 2^{n^{1+ \frac{1}{\ell-1}- \frac{\lambda\log\log n}{\log n}}}.
\end{align*}
\end{corollary}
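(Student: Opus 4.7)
The plan is to deduce Corollary~1.2 directly from Theorem~1.1, combined with the trivial lower bound $|\textmd{Forb}_r^L(n,\mathcal{H})|\geqslant 2^{\textmd{ex}_r^L(n,\mathcal{H})}$ already recorded in the introduction. No additional probabilistic or combinatorial input is required beyond the high-probability existence statement for the random greedy high girth $r$-clique removal process; the corollary is essentially a bookkeeping step that translates a probabilistic construction into two deterministic lower bounds.

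First, I would apply Theorem~1.1 with $\mathcal{H}=\{C_i^r:3\leqslant i\leqslant \ell\}$. Since with high probability the process terminates with a linear $r$-graph $H$ on vertex set $[n]$ of girth larger than $\ell$ whose edge count $M$ satisfies $M\geqslant n^{1+1/(\ell-1)-\lambda\log\log n/\log n}$, such an $H$ exists for all sufficiently large $n$. As $H$ is linear and contains no $C_i^r$ for $3\leqslant i\leqslant \ell$, it lies in $\textmd{Forb}_L(n,r,\ell)$, and hence
\begin{align*}
\textmd{ex}_L(n,r,\ell)\,\geqslant\, e_H\,\geqslant\, n^{1+\frac{1}{\ell-1}-\frac{\lambda\log\log n}{\log n}},
\end{align*}
which is the first claimed inequality.

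Second, for the counting bound I would exploit that both properties ``linear'' and ``girth larger than $\ell$'' are monotone under deleting edges. Thus, fixing any extremal $H^\ast\in\textmd{Forb}_L(n,r,\ell)$ with $e_{H^\ast}=\textmd{ex}_L(n,r,\ell)$, every one of the $2^{e_{H^\ast}}$ spanning sub-hypergraphs of $H^\ast$ obtained by choosing an arbitrary subset of $E(H^\ast)$ also belongs to $\textmd{Forb}_L(n,r,\ell)$, and these are pairwise distinct as labelled hypergraphs on $[n]$. Substituting the first bound then gives
\begin{align*}
|\textmd{Forb}_L(n,r,\ell)|\,\geqslant\, 2^{\textmd{ex}_L(n,r,\ell)}\,>\, 2^{n^{1+\frac{1}{\ell-1}-\frac{\lambda\log\log n}{\log n}}},
\end{align*}
which is the second claimed inequality.

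There is no substantial obstacle in the deduction itself: the entire technical weight of Corollary~1.2 is absorbed into the analysis of the random greedy high girth $r$-clique removal process in Theorem~1.1, whose proof must track a delicate ensemble of random variables through the process, with trend and boundedness hypotheses strong enough to force concentration down to a near-optimal stopping time. Once that theorem is in hand, the corollary follows in two elementary lines.
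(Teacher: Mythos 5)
Your proposal is correct and is exactly the argument the paper intends: Theorem~1.1 produces, for large $n$, a linear $r$-graph on $[n]$ of girth greater than $\ell$ with at least $n^{1+\frac{1}{\ell-1}-\frac{\lambda\log\log n}{\log n}}$ edges, which gives the bound on $\textmd{ex}_L(n,r,\ell)$, and the second inequality then follows from the trivial subgraph-counting bound $2^{\textmd{ex}_r^L(n,\mathcal{H})}\leqslant |\textmd{Forb}_r^L(n,\mathcal{H})|$ already recorded in the introduction. The only cosmetic point is passing from $\geqslant$ to the strict $>$ in the corollary, which is absorbed by a negligible adjustment of $\lambda$.
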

\noindent

The remainder of the paper is structured as follows. 
Notation and  auxiliary
results used throughout the paper are presented in Section 2.
We define the random greedy high linear-girth $r$-clique
removal process in Section 3, introducing some key random variables of the process that we wish to track,  
estimating the corresponding expected trajectory and 
 choosing error function for each tracked variable.
 We formally prove the concentration of all these variables in Section 4,
 where the required one-step changes, trend hypotheses and boundedness hypotheses for 
these tracked variables are analyzed in higher uniformities.
    Two important claims to bound the overcounting and boundedness parameters
    are discussed in detail in  the final section.

\section{Preliminaries}

We adopt the standard asymptotic Landau notation and expressions from~\cite{boh19} throughout. 
All asymptotics in this paper are with respect to $n\to\infty$. 
For two positive-valued functions $a(n)$, $b(n)$ on the variable $n$,
write $a(n)=O(b(n))$ if there exists a constant $C>0$ such that for all
$n$, $|a(n)|\leqslant C\cdot |b(n)|$. Write $a(n)=o(b(n))$ or $a(n)\ll b(n)$ if
eventually $b(n)>0$ and $\lim_{n\rightarrow \infty}a(n)/b(n)$ exists and 
equals $0$; $a(n)=\omega(b(n))$ if eventually $a(n)>0$ and $b(n)=o(a(n))$;
$a(n)=b(n)(1+o(1))$ or $a(n)\sim b(n)$ means $\lim_{n\rightarrow \infty}a(n)/b(n)=1$.
Denote $a(n)=\Omega(b(n))$ if $b(n)=O(a(n))$.
Further, $a(n)=\Theta(b(n))$ if both $a(n)=O(b(n))$ and $b(n)=\Omega(a(n))$. 
Let $a=b\pm c$ be short for $a\in [b-c,b+c]$.
Equations containing $\pm$ are always to be interpreted from left to right,
i.e. $b_1\pm c_1=b_2\pm c_2$ means that $b_1-c_1\geqslant b_2-c_2$ and 
$b_1+c_1\leqslant b_2+c_2$. 
Moreover, given a set ${S}$ and integers $a, b\geqslant 0$,
we write ${{S}\choose b}$ for the collection of all $b$-sets of ${S}$;
in particular, ${{S}\choose b}=\emptyset$ and ${a\choose b}=0$ if $b>|{S}|$ and $b>a$;
$a\wedge b$ denotes the minimum of $a$ and $b$. 
All logarithms are natural. 
The floor and ceiling signs are omitted whenever they are not crucial.
 
Let $(\Omega,\mathcal{F},\mathbb{P})$ be an arbitrary probability space,
and $(\mathcal{F}_i)_{i\geqslant 0}$ be the filtration given by the evolution of
the process. For an event $\mathcal{A}$ or a random variable ${Z}$ in $(\Omega,\mathcal{F},\mathbb{P})$, 
let $\mathbb{P}[\mathcal{A}]$, $\mathbbm{1}_{\mathcal{A}}$ and $\bar{\mathcal{A}}$ denote the probability of
$\mathcal{A}$, the indicator function of $\mathcal{A}$ and the complement of $\mathcal{A}$; $\mathbb{E}[{Z}]$ 
and $\mathbb{V}[{Z}]$ denote the expectation and variance of ${Z}$.
Let ${X}(i)$ be a set which contains all elements with a 
certain characteristics at the step $i$ in the process. 
For a sequence of random variables $\{|{X}(i)|\}_{i\geqslant 0}$,
 let $\Delta {X}(i)=|{X}(i+1)|-|{X}(i)|$ denote the one-step change
of the random variable $|{X}(i)|$. The pair $\{|{X}(i)|,\mathcal{F}_i\}_{i\geqslant 0}$
 is then called a submartingale or a supermartingale
if $|{X}(i)|$ is $\mathcal{F}_i$-measurable  and $\mathbb{E}[\Delta {X}(i)|{\mathcal{F}}_i]\geqslant 0$
or $\mathbb{E}[\Delta {X}(i)|{\mathcal{F}}_i]\leqslant 0$ for all $i\geqslant 0$, respectively.
As usual, an event 
is said to occur  with high probability (\textit{w.h.p.}  for short), 
if the probability that it holds tends to 1 when $n\rightarrow\infty$.

Rescale the number of steps $i$
  to be $t=t_i= \frac{i}{n^2}$ such that  $t_{i+1}= t+ \frac{1}{n^2}$
  and every function of the discrete variable $i$ 
  can be viewed as  a continuous function
of $t$. It is sometimes more convenient to think of our random variables as depending
on a continuous variable $t$ rather than the discrete variable $i$.
We pass between these interpretations without comments.
The main tool in the proof of our main results is the differential equations method,
which applies a pseudo-random heuristic to divine the trajectories 
of a group of graph parameters that govern 
the evolution of the process. It
  plays a central role 
in the understanding of several other constrained random processes to produce 
interesting combinatorial objects~\cite{bennett15,bohman09,bohman15,boh19,glock20,glock2024,joos2025,kuhn16,rw92,rw97,tian23}.
The key in applying the method is to find a collection of random variables  
containing $|{X}(i)|$ whose one step change can be expressed in terms of 
other variables in the collection. These expressions form a system of difference 
equations and the solution to the initial value problem for the corresponding
system of differential equations gives a natural guess for the 
expected trajectories of the variables. The one-step change and boundedness hypotheses for
each tracked variable are analyzed, then the last step is to apply 
a martingale concentration inequality and a union bound to prove 
that all of the variables in the collection are concentrated
around their expected trajectories. 

Let the event $\mathcal{E}_{{X}}$ be of the form $|{X}(i)|= 
{x}(t)\pm \epsilon_{x}(t)$ for all $i\leqslant {M}$,
where ${x}(t)$ is the expected trajectory of $|{X}(i)|$, 
$\epsilon_{x}(t)$ is the error function
to control the deviation of $|{X}(i)|$ from ${x}(t)$. 
Let the stopping time $\tau$ be the smallest index $i$ such that any one of the random variables 
that we are tracking has strayed far from its expected trajectory
at or before the $i$th step,
 making it possible to establish the martingale condition.
We show that the event $\{\tau={M}\}$ holds by means of
$\{\tau={M}\}=\cap_{{X}\in\mathcal{I}}\mathcal{E}_{{X}}$, where $\mathcal{I}$ 
is the family of all key variables at every step and $| \mathcal{I}|$ is polynomial in $n$. 
We define a pair of sequences
of auxiliary random variables for each $|{X}(i)|$ as
\begin{align}
{X}^{\pm}(i)&=\pm\Bigl[|{X}(i)|-{x}(t)\Bigr]-\epsilon_{x}(t).
\end{align}
Note that the desired estimate $|{X}(i)|={x}(t)\pm \epsilon_{x}(t)$ 
follows if the estimate ${X}^{\pm}(i)\leqslant 0$ holds. 
To prove ${X}^{\pm}(i)\leqslant 0$ holds \textit{w.h.p.}, 
we first establish that the sequence ${X}^{\pm}(i)$ is a supermartingale, and
then provide bounds on the one-step change $\Delta {X}^{\pm}(i)={X}^{\pm}(i+1)-{X}^{\pm}(i)$
for each variable.
From~(2.1), for $0\leqslant i\leqslant {M}$, observe that 
\begin{align}
\Delta {X}^{\pm}(i)&=\pm \Delta {X}(i)\mp \Bigl[{x}\bigl(t+ \frac{1}{n^2}\bigr)-{x}(t)\Bigr]
- \Bigl[\epsilon_{x}\bigl(t+ \frac{1}{n^2}\bigr)-\epsilon_{x}(t)\Bigr]\notag\\
&=\pm \biggl[\Delta {X}(i)- \frac{{x}'(t)}{n^2}\biggr]- \frac{\epsilon_{x}'(t)}{n^2}+O\biggl(\sup_{\varsigma\in[t, t+ \frac{1}{n^2}]} \frac{|{x}''(\varsigma)|+|\epsilon_{x}''(\varsigma)|}{n^4}\biggr),
\end{align}
where the second equality is derived 
by applying Taylor's theorem with remainder in Lagrange form on  
${x}(t+ \frac{1}{n^2})-{x}(t)$ and $\epsilon_{x}(t+ \frac{1}{n^2})-\epsilon_{x}(t)$ until
the second order. Based on the estimated expression $x(t)$
and error function $\epsilon_{x}(t)$ into the equations~(2.1) and (2.2),
 we will show that the terms $\mathbb{E}[\Delta {X}(i)\,| \mathcal{F}_i]$ and
$ \frac{{x}{'}(t)}{n^2}$ cancel each other out; $ \frac{\epsilon_{x}'(t)}{n^2}$
dominates the remaining terms such that
$\mathbb{E}[\Delta {X}^\pm(i)\,|{\mathcal{F}}_i]\leqslant 0$ and 
$\mathbb{E}[|\Delta {X}^\pm(i)|\,|{\mathcal{F}}_i]=O( \frac{|\epsilon_{x}'(t)|}{n^2})$.
We also show that $|\Delta {X}^{\pm}(i)|\leqslant |\Delta {X}(i)|+ 
O(\max\{ \frac{|{x}'(t)|}{n^2}, \frac{|\epsilon_{x}'(t)|}{n^2}\})$ 
and the initial value ${X}^{\pm}(0)\leqslant- \frac{1}{2}\epsilon_{x}(0)$.
Applying  Freedman's martingale 
concentration inequality~\cite{freed75} (see also~\cite{warnke16}) and a union bound
of the events ${X}^{\pm}(i)\geqslant 0$ for all tracked variables at every step $i\leqslant M$, 
we prove 
that the designated variable ${X}^{\pm}(i)\geqslant 0$ 
from some $i\leqslant {M}$ has extremely low probability.
 For this to work, it is essential to strictly characterize 
$x(t)$ and $\epsilon_{x}(t)$ for each tracked variable,
 where
 $\epsilon_{x}(t)$  has a large enough growth rate throughout 
 the process, but $\epsilon_{x}(t)$ must not grow too fast, otherwise we would lose
 control of ${X}(i)$.

\section{High linear-girth $r$-clique removal process}

\subsection{Definitions and Variables}

\begin{definition}
For any given integer $i$ with $3\leqslant i\leqslant \ell$, 
define a Type-$i$ configuration to be a linear $r$-graph $L$ with ${e}_L=i$ 
which spans at most  ${v}_L\leqslant (r-1)\cdot i$ vertices, with the property that $L$ contains no 
subhypergraph $G\subsetneq L$ with $3\leqslant {e}_{G}< i$ edges on 
${v}_{G}\leqslant(r-1)\cdot {e}_{G}$ vertices. Let $\mathcal{L}=\mathcal{L}_\ell$ 
 denote the collection of all  Type-$i$ configurations $L$ with
$3\leqslant i\leqslant \ell$.
\end{definition} 
\begin{remark}It is observed that 
the structure of Type-$i$ configuration is a linear cycle $C_i^r$, 
because it is linear with linear girth exactly $\ell$, which
 implies that ${v}_L=(r-1)\cdot {e}_L$ for any 
$L\in\mathcal{L}$, and
${v}_{G}\geqslant (r-1)\cdot {e}_{G}+1$ for any $G\subsetneq L$ with $L\in\mathcal{L}$. 
For any fixed integer $\ell$,  we have $|\mathcal{L}|=O(1)$.
The definition here is helpful for analysing  
the probability of availability for each vertex set of an $r$-clique in~(3.8),
and the ideal expected values of these 
tracked variables in~(3.10), (3.13) and (3.14). The form in this definition
also facilitates the analysis of two groups
of auxiliary enumerative parameters in Section 5. 
\end{remark}

The \textit{random greedy high linear-girth $r$-clique removal process} starts with a complete graph
 on vertex set $[n]$, denoted by $\mathbb{G}(0)$,
 and $\mathbb{G}(i+1)$ is the remaining graph from $\mathbb{G}(i)$ by
 selecting one $r$-clique uniformly at random out of all \textit{available} $r$-cliques in $\mathbb{G}(i)$
 and deleting all its edges from the edge set ${E}(i)$ of $\mathbb{G}(i)$,
 where an \textit{available} $r$-clique in $\mathbb{G}(i)$ means that adding
 the vertex set of this $r$-clique as an edge
 does not produce an $r$-graph in $\mathcal{L}$  with the edges generated by
 the vertices of previously chosen available $r$-cliques.
 Let ${M}=\min\{i:\mathbb{G}(i) {\text{ is} \textit{ available } r{\text{-}\textit{clique} \text{ free}}}\}$.
 This process is equivalently viewed as creating
 a random linear $r$-graph with linear girth greater than $\ell$, that is, the 
 \textit{random greedy high linear-girth linear $r$-uniform hypergraph process}.
 Beginning with the empty $r$-graph $\mathbb{H}(0)$ on the same vertex set $[n]$ we 
sequentially set $\mathbb{H}({i+1})=\mathbb{H}(i)+\boldsymbol{e}_{i+1}$, where the added $r$-set $\boldsymbol{e}_{i+1}$
is the vertex set of the chosen available $r$-clique in $\mathbb{G}(i)$, and 
$\boldsymbol{e}_{i+1}$ is also called to be an available $r$-set or available edge for $\mathbb{H}(i)$. 
  The process terminates at a maximal linear $r$-graph $\mathbb{H}({M})$ with linear girth greater than $\ell$
  and  no available $r$-sets or available edges. At time $i$, graphs are defined with 
  respect to edge sets on the vertex set $[n]$, where
  both ${\mathbb{G}}(i)$ and $\mathbb{H}(i)$ depend on the outcomes of the process up
  to this point, and ${\mathbb{G}}(i)$ is the graph given by the pairs of vertices 
  that do not appear in any edge of $\mathbb{H}(i)$. 
It will be convenient to study the relation 
  between $\mathbb{G}(i)$ and $\mathbb{H}(i)$ via
\begin{align}
{E}(i)=\binom{[n]}{2}\Bigl\backslash \bigcup_{\boldsymbol{f}\in \mathbb{H}(i)}
\biggl\{\bigl\{x,y\bigr\}\in \binom{\boldsymbol{f}}{2}\biggr\}. 
\end{align}

For $2\leqslant m\leqslant r$, ${u}\in[n]$ and $\boldsymbol{f}_m=\{{u}_1,\cdots,{u}_m\}\in \binom{[n]}{m}$,
let ${N}_{{u}}(i)=\{{x}\in [n]:\{x,u\}\in {E}(i)\}$,
${N}_{\boldsymbol{f}_m}(i)=\cap_{j=1}^m {N}_{{u}_j}(i)$ and 
 ${K}_m(i)$ be the set of vertex sets of $m$-cliques in ${\mathbb{G}}(i)$. 
Due to the nature of the random greedy high linear-girth $r$-clique removal process, it should
come as no surprise that the most important variable for us to track is the number of 
available $r$-cliques in $\mathbb{G}(i)$. To this end, define
 ${Q}(i)$ to be the set of vertex sets of available $r$-cliques in ${\mathbb{G}}(i)$.
Our goal is to estimate the random variable $|{Q}(i)|$.
Note that $|{Q}(0)|= \binom{n}{r}$, and at termination
of the process $|{Q}({M})|=0$. 
Fix one $\boldsymbol{f}_m\in {K}_m(i)$, and define the set
${Y}_{\boldsymbol{f}_m}(i)$ to be 
\begin{align}
{Y}_{\boldsymbol{f}_m}(i)=\Bigl\{ \boldsymbol{f}'_{r-m}\in {{N}_{\boldsymbol{f}_m(i)}\choose r-m}\,:
\,\boldsymbol{f}_m \cup \boldsymbol{f}'_{r-m}\in {Q}(i)\Bigr\}.
\end{align}
Thus, $|{Y}_{\boldsymbol{f}_m}(i)|$ counts the number of available $r$-cliques in $\mathbb{G}(i)$ extended from 
the vertex set $\boldsymbol{f}_m\in {K}_m(i)$, and it is also called to be the 
\textit{available $(r-m)$-codegree} of $\boldsymbol{f}_m$.
Particularly, $|{Y}_{\boldsymbol{f}}(i)|=\mathbbm{1}_{\boldsymbol{f}}$, where $\mathbbm{1}_{\boldsymbol{f}}$ is 
the indicator random variable with $\mathbbm{1}_{\boldsymbol{f}}=1$ if $\boldsymbol{f}\in {Q}(i)$,
instead $\mathbbm{1}_{\boldsymbol{f}}=0$ otherwise. 

 We also track all possible routes 
to generate any copy of $L\in \mathcal{L}$ in $\mathbb{H}(i)$.
For any  $L\in \mathcal{L}$,  
let $\mathcal{F}_L$ be the set of all copies of $L$ in the complete $n$-vertex
$r$-graph. Given $L\in \mathcal{L}$, $\boldsymbol{f}\in {Q}(i)$ and $0\leqslant k\leqslant e_L-2$, 
define ${W}_{\boldsymbol{f},L,k}(i)$ to 
be the set of  $L'\in \mathcal{F}_L$ 
which is extended from the $r$-set $\boldsymbol{f}\in {Q}(i)$ such that $|L'\cap \mathbb{H}(i)|=k$
and $|L'\cap {Q}(i)|=e_L-k$, that is,
\begin{align}
{W}_{\boldsymbol{f},L,k}(i)&=\Bigl\{L'\in \mathcal{F}_L:\, \boldsymbol{f}\in L'\cap {Q}(i),\  
|L'\cap {Q}(i)|=e_L-k \text{\ and }|L'\cap \mathbb{H}(i)|=k\Bigr\}.
\end{align}

\subsection{Estimates on the variables}

A pseudo-random heuristic for divining the evolution of variables plays
a central role in the understanding of graph processes.
The following idea of estimating
on the key variables  has been applied in 
the environments of some interesting combinatorial objects~\cite{bennett15,bohman09,bohman15,boh19,glock20,joos2025,kuhn16, tian23}.
We extend the ideas in~\cite{glock20,boh19} to show the expected trajectories
for each tracked variable of the high linear-girth linear uniform hypergraph process
in a higher uniformity.

Suppose that the events that each available $r$-set, denoted by $\boldsymbol{f}$, is chosen 
to become an edge in $\mathbb{H}(i)$ are 
independent with probability  $\pi_i=\pi(t)$ without considering the forbidden configurations in $\mathcal{L}$
defined in Definition 3.1,
that is, $\mathbb{P}[\boldsymbol{f}\in \mathbb{H}(i)]=\pi_i$.
It implies that  $\mathbb{H}(i)$ 
is regarded as the binomial random $r$-graph
$\mathcal{H}_r(n,\pi_i)$, where every $r$-set appears independently with probability~$\pi_i$.
Recalling that $t=t_i= \frac{i}{n^2}$ and in view of $|\mathbb{H}(i)|=i\sim \binom{n}{r}\cdot r!t/n^{r-2}$, we have
\begin{align}
\pi_i\sim \frac{r!t}{n^{r-2}}.
\end{align}
From another viewpoint, 
 it means that the graph ${\mathbb{G}}(i)$ is the random graph whose variables are roughly
 the same as they are in the binomial random graph $\mathcal{G}(n,p_i)$,
 where every edge appears independently with probability~$p_i$ in $\mathcal{G}(n,p_i)$. 
Hence, we have\begin{align}
p_i&=p(t)=\mathbb{P}\bigl[\{x,y\}\in {E}(i)\bigr]=\mathbb{P}\Bigl[\bigcap_{\{\boldsymbol{f}\,|\{x,y\}\subseteq \boldsymbol{f}\}}
\Bigl\{\boldsymbol{f} \notin \mathbb{H}(i)\Bigr\}\Bigr]  
=1-\sum_{\{\boldsymbol{f}\,|\{x,y\}\subseteq \boldsymbol{f}\}}\mathbb{P}\bigl[
\boldsymbol{f} \in \mathbb{H}(i)\bigr]\notag\\
&\sim 1- \frac{n^{r-2}}{(r-2)!} \pi_i
\sim 1-r(r-1)t,
\end{align}
where the last approximate equality is derived from~(3.4).

Under the assumption that $\mathbb{H}(i)$ and ${\mathbb{G}}(i)$ respectively resemble $\mathcal{H}_r(n,\pi_i)$
and $\mathcal{G}(n,p_i)$, for any $\boldsymbol{f}_m\in {K}_m(i)$ with $2\leqslant m\leqslant r-1$, $\boldsymbol{f}\in {Q}(i)$, 
$L\in \mathcal{L}$ and $0\leqslant k\leqslant e_L-2$,
we try to estimate the expected trajectory of 
$|{Q}(i)|$ as $|{Q}(i)|={q}(t)\pm\epsilon_{q}(t)$, 
the likely values of  auxiliary random variables
$|{Y}_{\boldsymbol{f}_m}(i)|$ and $|{W}_{\boldsymbol{f},L,k}(i)|$ as 
$|{Y}_{\boldsymbol{f}_m}(i)|={y}_m(t)\pm\epsilon_{y_m}(t)$
and $|{W}_{\boldsymbol{f},L,k}(i)|={w}_{L,k}(t)\pm\epsilon_{w_{L,k}}(t)$.


Given
$\mathbb{H}(i)$, an $r$-set $\boldsymbol{f}\in {Q}(i)$
 means that
  $\{\{x,y\}\in \binom{\boldsymbol{f}}{2}\}\subseteq {E}(i)$
and $\mathbb{H}(i)+\boldsymbol{f}$ contains no $L'\in\cup_{L\in \mathcal{L}}\mathcal{F}_L$
satisfying $\boldsymbol{f}\in L'$,
then it follows that 
\begin{align}
\mathbb{E}[|{Q}(i)|]=\sum_{\boldsymbol{f}\in \binom{[n]}{r}}p_i^{ \binom{r}{2}}
\mathbb{P}\Bigl[\bigcap_{L\in \mathcal{L}}\bigl\{\mathbb{H}(i)+\boldsymbol{f}\text{ contains no }
L'\in {F}_L\text{ with }\boldsymbol{f}\in L'\bigr\}\Bigr].
\end{align}

Let 
\begin{align*}
\mathcal{N}_{\boldsymbol{f},L}=\bigl\{L'\in \mathcal{F}_L: \boldsymbol{f}\in L'\bigr\}
\end{align*}
denote the set of $L'\in \mathcal{F}_L$ satisfying $\boldsymbol{f}\in L'$ for 
any fixed $r$-set $\boldsymbol{f}\in \binom{[n]}{r}$ and  $L\in \mathcal{L}$.
We have
\begin{align}
|\mathcal{N}_{\boldsymbol{f},L}|&= \frac{r!e_L}{|\texttt{Aut}(L)|}n^{v_L-r}+O(n^{v_L-r-1})
\end{align}
because $|\mathcal{N}_{\boldsymbol{f},L}|\cdot \binom{n}{r} = |\mathcal{F}_L|\cdot e_L$
and $|\mathcal{F}_L|= {n(n-1)\cdots(n-v_L+1)}/{|\texttt{Aut}(L)|}=
 {n^{v_L}}/{|\texttt{Aut}(L)|}+O(n^{v_L-1})$,
where $\texttt{Aut}(L)$ is the automorphism group of $L$. 
The main term $ \frac{r!e_L}{|\texttt{Aut}(L)|} n^{v_L-r}$ in~(3.7) counts 
the number of $L'\in \mathcal{F}_L$ in $\mathcal{N}_{\boldsymbol{f},L}$
in which the intersection of all of them is exactly
$\boldsymbol{f}$.
Recall that $\mathbb{H}(i)$ 
is regarded as $\mathcal{H}_r(n,\pi_i)$ and
 $v_L=(r-1)\cdot e_L$ for any $L\in \mathcal{L}$ by Remark~3.2,
then we estimate 
\begin{align}
&\mathbb{P}\Bigl[\bigcap_{L\in \mathcal{L}}\bigl\{\mathbb{H}(i)+\boldsymbol{f}\text{ contains no }
L'\in \mathcal{F}_L\text{ with }\boldsymbol{f}\in L'\bigr\}\Bigr]\notag\\
&\sim
\prod_{L\in \mathcal{L}}\bigl(1-\pi_i^{e_L-1}\bigr)^{ \frac{r!e_L}{|\texttt{Aut}(L)|}n^{(r-1)e_L-r}}\notag\\
&\sim \xi_i,
\end{align}
where
\begin{align}
\xi_i&=\xi(t)=\exp\biggl[-\sum_{L\in \mathcal{L}} \frac{r!e_L}{|\texttt{Aut}(L)|}(r!t)^{e_L-1}n^{e_L-2}\biggr]
\end{align}
is the probability of availability for each vertex set of an
$r$-clique in $\mathbb{G}(i)$. 
As the equations shown in~(3.4),~(3.8) and~(3.9),
 we anticipate the expected value of $\mathbb{E}[|{Q}(i)|]$ in~(3.6) as
\begin{align}
\mathbb{E}[|{Q}(i)|]
&\sim \frac{n^r}{r!}p_i^{ \binom{r}{2}}\xi_i.
\end{align}

\begin{remark}
As the equation of $\xi_i$ shown in~(3.9), we also have
\begin{align}
 \xi_i'=-\xi_i\cdot \tilde{\xi}_i
 \end{align}
with 
\begin{align}
\tilde{\xi}_i=\tilde{\xi}(t)= \sum_{L\in \mathcal{L}} \frac{e_L(e_L-1)}{|\texttt{Aut}(L)|}(r!)^{e_L}(nt)^{e_L-2}.
\end{align}
Hence, $\tilde{\xi}_i=O((nt_M)^{\ell-2})$ with $t_M$ in~(3.26) and $|\mathcal{L}|=O(1)$ in Remark~3.2.
\end{remark}

Similarly,
for any $\boldsymbol{f}_m\in {K}_m(i)$ with $2\leqslant m\leqslant r-1$,
as the definitions of ${Y}_{\boldsymbol{f}_m}(i)$ 
in~(3.2), 
we estimate the expected value of
$|{Y}_{\boldsymbol{f}_m}(i)|$ to be 
\begin{align}
\mathbb{E}[|{Y}_{\boldsymbol{f}_m}(i)|]&\sim \frac{n^{r-m}}{(r-m)!}p_i^{\binom{r}{2}-\binom{m}{2}} \xi_i,
\end{align}
in which $\binom{n-m}{r-m}p_i^{\binom{r}{2}- \binom{m}{2}}\sim \frac{n^{r-m}}{(r-m)!}p_i^{\binom{r}{2}-\binom{m}{2}}$
counts the expected number of $r$-cliques extended from $\boldsymbol{f}_m$ in $\mathbb{G}(i)$;
$\xi_i$ is the probability of availability for each vertex set of an
$r$-clique in $\mathbb{G}(i)$ shown in~(3.9). 
For any $\boldsymbol{f}\in {Q}(i)$,  $L\in \mathcal{L}$ 
and $0\leqslant k\leqslant e_L-2$, as the definition of ${W}_{\boldsymbol{f},L,k}(i)$ in~(3.3),
we also approximate the expected value of $|{W}_{\boldsymbol{f},L,k}(i)|$ as
 \begin{align}
\mathbb{E}[|{W}_{\boldsymbol{f},L,k}(i)|]&\sim |\mathcal{N}_{\boldsymbol{f},L}|\cdot
\binom{e_L-1}{k}\cdot \pi_i^k \cdot (p_i^{\binom{r}{2}} \xi_i)^{e_L-k-1},
\end{align}
 where $|\mathcal{N}_{\boldsymbol{f},L}|\cdot\binom{e_L-1}{k}$ counts the ways to choose a $L'\in \mathcal{F}_L$ 
satisfying $\boldsymbol{f}\in L'$, and then choose $k$ edges in $L'-\boldsymbol{f}$;
$\pi_i^k (p_i^{\binom{r}{2}} \xi_i)^{e_L-k-1}$ represents 
the probability that these chosen $k$ edges are in $\mathbb{H}(i)$
and the remaining $e_L-k-1$ edges are 
available to $\mathbb{H}(i)$.

 Our main result below 
 shows that 
these random variables are concentrated around
the trajectories we have heuristically divined  in~(3.10), (3.13) and (3.14)
to verify our pseudo-random intuitions.

\begin{theorem}
Given any fixed $\mu>0$ and integers $r,\ell\geqslant 3$, there exist $\lambda>0$ and $\alpha\in (0,1)$
such that, for any ${\boldsymbol{f}}_m\in {K}_m(i)$ with $2\leqslant m\leqslant r-1$,
${\boldsymbol{f}}\in {Q}(i)$, $L\in \mathcal{L}$ and  $0\leqslant k\leqslant e_L-2$, 
with probability at least $1-n^{-\mu}$, 
\begin{align}
|{Q}(i)|&={q}(t)\pm\epsilon_{q}(t),\\
|{Y}_{{\boldsymbol{f}}_m}(i)|&={y}_m(t)\pm \epsilon_{{y}_m}(t),\\ 
|{W}_{{\boldsymbol{f}},L,k}(i)|&={w}_{L,k}(t)\pm \epsilon_{w_{L,k}}(t),
\end{align}
holding when $i$ satisfying $0\leqslant i\leqslant {M}$ 
with ${M}$ defined as
\begin{align}%
{M}=n^{1+ \frac{1}{\ell-1}-\lambda \frac{\log\log n}{\log n}}.
\end{align}
Here, for $t\in [0,t_M]$,
\begin{align}
{q}(t)&= \frac{n^r}{r!}p_i^{\binom{r}{2}}\xi_i,\\
{y}_m(t)&= \frac{n^{r-m}}{(r-m)!}p_i^{ \binom{r}{2}- \binom{m}{2}}\xi_i,\\
{w}_{L,k}(t)&= \frac{r!e_L}{|\texttt{Aut}(L)|}\binom{e_L-1}{k}(r!t)^{k}(p_i^{\binom{r}{2}}\xi_i)^{e_L-1-k}n^{(r-1)(e_L-k)+k-r},
\end{align}
and 
\begin{align}
\epsilon_{q}(t)&=\sigma_i n^{\alpha+r-1},\\
\epsilon_{y_m}(t)&=\sigma_i n^{\alpha+r-m-1},\\
\epsilon_{w_{L,k}}(t)&=\beta_k\sigma_i^{k+2} n^{\alpha+(r-1)(e_L-k)+k-r-1}t_{\textmd{M}}^k,
\end{align}
in which  
\begin{align}
\sigma_i=\sigma(t)=\log(n^\alpha+n^2t),
\end{align} $\beta_k$ is a constant depending on $k$ and
\begin{align}
t_{{M}}= \frac{{M}}{n^2}=n^{-1+ \frac{1}{\ell-1}-\lambda \frac{\log\log n}{\log n}}.
\end{align}
\end{theorem}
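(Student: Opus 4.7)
The plan is to prove Theorem~3.4 by the differential equations method, following exactly the template sketched in Section~2 around equations~(2.1)--(2.2). Let $\mathcal{I}$ be the polynomial-sized collection of all tracked variables, namely $\{Q\}$ together with $\{Y_{\boldsymbol{f}_m}:2\leqslant m\leqslant r-1,\ \boldsymbol{f}_m\in K_m(i)\}$ and $\{W_{\boldsymbol{f},L,k}:L\in\mathcal{L},\ 0\leqslant k\leqslant e_L-2,\ \boldsymbol{f}\in Q(i)\}$. For each $X\in\mathcal{I}$ I form the pair of auxiliary sequences $X^{\pm}(i)$ from~(2.1) with $x(t)$ and $\epsilon_x(t)$ as in~(3.19)--(3.24), and let $\tau$ be the least $i\leqslant M$ at which some tracked variable first leaves its prescribed window. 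The aim is to show that each $\{X^{\pm}(i\wedge\tau)\}_{i\geqslant 0}$ is a supermartingale with tightly bounded one-step change, after which Freedman's inequality~\cite{freed75,warnke16} together with a union bound over the $|\mathcal{I}|\cdot M=\operatorname{poly}(n)$ relevant events yields $\mathbb{P}[\tau<M]\leqslant n^{-\mu}$.

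The first substantive step is, for each $X\in\mathcal{I}$ and each $i<\tau$, to write $\mathbb{E}[\Delta X(i)\mid\mathcal{F}_i]$ in closed form in terms of the other tracked variables, and then to check that the trajectories~(3.19)--(3.21) make it agree with $x'(t)/n^2$ up to terms dominated by $\epsilon_x'(t)/n^2$. For $|Q(i)|$, the $r$-cliques lost at step $i+1$ split into three contributions: (a)~the chosen clique $\boldsymbol{e}_{i+1}$ itself; (b)~all remaining available $r$-cliques of $\mathbb{G}(i)$ that share a pair in $\binom{\boldsymbol{e}_{i+1}}{2}$, which are enumerated (after inclusion--exclusion) by $\sum_{\boldsymbol{f}_2\subseteq\boldsymbol{e}_{i+1}}|Y_{\boldsymbol{f}_2}(i)|$; and (c)~all $\boldsymbol{f}\in Q(i)$ whose addition together with $\boldsymbol{e}_{i+1}$ and $e_L-2$ previously selected edges would close a copy of some $L\in\mathcal{L}$, enumerated by $\sum_{L\in\mathcal{L}}|W_{\boldsymbol{e}_{i+1},L,e_L-2}(i)|$. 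Substituting the trajectories and using the derivative identity~(3.11)--(3.12) for $\xi'_i$, one checks the cancellation $\mathbb{E}[\Delta Q(i)\mid\mathcal{F}_i]=q'(t)/n^2+O(\text{lower order})$ demanded by~(2.2). The same template works for $|Y_{\boldsymbol{f}_m}(i)|$ and, with more care, for $|W_{\boldsymbol{f},L,k}(i)|$: the latter can decrease when an available $r$-set of a partial $L$-configuration is destroyed in one of the three ways above, and can also change by $\pm 1$ when a previously available edge of the partial configuration is itself selected, incrementing the $k$-parameter.

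With the trend hypothesis $\mathbb{E}[\Delta X^{\pm}(i)\mid\mathcal{F}_i]\leqslant 0$ verified, the boundedness hypothesis requires a deterministic bound on $|\Delta X(i)|$ valid throughout $i<\tau$. This reduces to bounding the maximum codegree $|Y_{\boldsymbol{f}_2}(i)|$ and the maximum number of partial $L$-configurations through a fixed $r$-set, both of which are sandwiched by their own trajectories plus tolerances, i.e., by the very estimates we are proving; these are the two overcounting/boundedness claims the author defers to the final section. Once they are in hand, Freedman's inequality applied to each $X^{\pm}(i\wedge\tau)$, using $X^{\pm}(0)\leqslant-\epsilon_x(0)/2$ and the variance estimate $\mathbb{E}[|\Delta X^{\pm}(i)|\mid\mathcal{F}_i]=O(|\epsilon_x'(t)|/n^2)$, produces a failure probability of $\exp(-\Omega(\sigma_i^2))=n^{-\omega(1)}$ per variable and per step, so the polynomial union bound closes the argument. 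The main obstacle, and the reason for the specific calibration $\epsilon_{w_{L,k}}(t)=\beta_k\sigma_i^{k+2}n^{\alpha+(r-1)(e_L-k)+k-r-1}t_M^k$, lies precisely in controlling the $W$-variables: a single added edge can simultaneously participate in many partial copies of $L$, and the error compounds as $k$ grows. One must show that the factors $\sigma_i^{k+2}$ and $t_M^k$ leave enough slack in $\epsilon_{w_{L,k}}'(t)/n^2$ to absorb the residuals inherited from $\epsilon_q$, $\epsilon_{y_m}$, and from lower indices $\epsilon_{w_{L,k'}}$ with $k'<k$. This in turn forces the cutoff $M=n^{1+1/(\ell-1)-\lambda\log\log n/\log n}$ from~(3.18), and accounts for the necessity of the $\log\log n/\log n$ correction highlighted after Theorem~1.1.
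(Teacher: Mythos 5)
Your proposal follows the same differential-equations template as the paper — the same trajectories $q, y_m, w_{L,k}$, the same error calibration involving $\sigma_i$ and $t_M^k$, the supermartingale construction $X^\pm$ from (2.1)–(2.2), and the Freedman-plus-union-bound closure — and it correctly flags the overcounting/boundedness estimates as the technical crux. One small structural difference: the paper does not run $|Q(i)|$ through Freedman as a separate supermartingale; it deduces (3.15) directly from (3.16) with $m=2$ via $|Q(i)|=\binom{r}{2}^{-1}\sum_{\boldsymbol{f}_2\in E(i)}|Y_{\boldsymbol{f}_2}(i)|$ (see (3.29)), so only the $Y$- and $W$-variables are tracked as martingales. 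Be aware that your sketch leaves undone exactly where the paper's content lies: the inclusion–exclusion identities that rewrite $|Q_{\boldsymbol{f}_m^c}(i)|$ and $|Q_{\boldsymbol{g}}(i)|$ in terms of $|Y_{\cdot}(i)|$ (Claims 4.1.1–4.1.2), the precise overcounting sets $\Upsilon_{\boldsymbol{f}},\Psi_{L'},\Lambda_{\boldsymbol{f},L,k-1}$ and their bounds (Claim 4.1.3), and the injection-counting Lemma 5.1 — a moment/Markov argument over all small $(\hat H,H,\theta)$ — which simultaneously yields Claim 4.1.3 and the boundedness parameters of Claim 4.1.6.
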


Theorem~3.4 is proved in Section 4 and we 
always tacitly assume $0\leqslant i\leqslant {M}$ with ${M}$ defined in~(3.18).
 It implies that the process produces
a linear $r$-graph of size at least ${M}$ with its linear girth greater than $\ell$.
Theorem~3.4 verifies Theorem~1.2 
with room to spare in the power of the logarithmic factor.
We make no attempts to optimize the constants $\lambda$, $\alpha$ and 
the coefficients $\beta_k$ for any $L\in \mathcal{L}$
 and  $0\leqslant k\leqslant e_L-2$. 
There are many choices of
them that can be balanced to satisfy certain inequalities. 
For example, for given positive integers $r$, $\ell$ and $k$,  we
choose these constants to satisfy the equations %
\begin{align}
\lambda> \frac{\ell}{\ell-1},\quad\alpha_0<\alpha<1 \quad\text{and}
\quad\beta_k= \Bigl(\frac{3\ell r!}{p_{{M}}^{ \binom{r}{2}}\xi_{{M}}}\Bigr)^{k},
\end{align}
where $\alpha_0= \frac{\ell-2}{\ell-1}$, $p_{{M}}$ and $\xi_{{M}}$ are constants 
when $i=M$ defined in~(3.5) and~(3.9). 
We will see that these choices are sufficient for our proof of Theorem 3.4 in next section.
 We do not replace them  with their actual values, which is for the interest of
understanding the role of these constants played in the calculations.

The equation of $|{Q}(i)|$ shown in~(3.15) follows 
from the equation of $|{Y}_{{\boldsymbol{f}}_2}(i)|$ in~(3.16) for any ${\boldsymbol{f}}_2\in {E}(i)$. 
Indeed, 
any $r$-set ${\boldsymbol{f}}\in {Q}(i)$ is counted
$ \binom{r}{2}$ times in $\sum_{{\boldsymbol{f}}_2\in {E}(i)}|{Y}_{{\boldsymbol{f}}_2}(i)|$ 
for any ${\boldsymbol{f}}_2\in {E}(i)$.
Since $|{E}(i)|=\binom{n}{2}- \binom{r}{2}i$ and $p_i=1-r(r-1)t$ in~(3.5),
it follows that  
\begin{align*}
|{E}(i)|&= \frac{n^2p_i}{2}\cdot (1- \frac{1}{np_i})
\end{align*} and
\begin{align}
|{Q}(i)|&=  \frac {1}{ \binom{r}{2}}\sum_{{\boldsymbol{f}}_2\in {E}(i)}|{Y}_{{\boldsymbol{f}}_2}(i)|\notag\\
&=\frac{n^2p_i}{[r]_2}\Bigl(1- \frac{1}{np_i}\Bigr)
 \bigl({y}_2(t)\pm \epsilon_{y_2}(t)\bigr) \notag\\
&={q}(t)\pm \epsilon_{q}(t), 
\end{align}
where $[r]_i=\prod_{j=0}^{i-1}(r-j)$ denotes the falling factorial, 
$ \frac{n^2p_i}{[r]_2}\cdot{y}_2(t)={q}(t)$ by~(3.19) and~(3.20), 
$ \frac{n^2p_i}{[r]_2}\cdot\epsilon_{y_2}(t)<\epsilon_{q}(t)$ and $ \frac{n}{[r]_2}\cdot({y}_2(t)\pm \epsilon_{y_2}(t))=
o(\epsilon_{q}(t))$ by (3.22) and 
(3.23). 
It thus suffices to establish the bounds in Theorem~3.4 of 
$|{Y}_{\boldsymbol{f}_m}(i)|$ for any $\boldsymbol{f}_m\in {K}_m(i)$ 
with $2\leqslant m\leqslant r-1$ and of $|{W}_{\boldsymbol{f},L,k}(i)|$
for any $\boldsymbol{f}\in {Q}(i)$, 
$L\in \mathcal{L}$ and $0\leqslant k\leqslant e_L-2$ in next section.

\vskip 0.3cm 
\begin{remark}
According to (3.5) and (3.9), note that the decreasing functions $p_i$
and $\xi_i$ satisfy
 $p_i,\xi_i=\Theta(1)$ when $t\in [0, t_{{M}}]$ 
 with $t_{{M}}$ defined in~(3.26). 
 Unfortunately, it is not allowed to choose $t_M=n^{-1+ \frac{1}{\ell-1}}$
 in the proof of our main results in Section~4 because
 we can not remove $\sigma_i$ or change the power of $\sigma_i$ in 
(3.22)-(3.24), where the factor $\sigma_i$ and its power in (3.23) and (3.24)
are sufficient to show the sequence of 
 each tracked variable is a supermartingales 
 based on the analysis and calculations in Section~4.2.1 and 4.2.2,
 and the factor $\sigma_i$ is sufficient to apply 
 Lemma~4.3.1 in Section~4.3.
\end{remark}

\vskip 0.3cm
\begin{remark}
It is impossible to apply the random greedy analysis to show 
a better lower bound for the step $i$ of the process
than $\Omega(n^{1+ \frac{1}{\ell-1}})$ for any pair of fixed integers $r,\ell\geqslant 3$. 
In fact, $\xi_i\rightarrow 0$ as  $t= 
n^{-1+ \frac{1}{\ell-1}+\varepsilon}$ for any arbitrarily small $\varepsilon>0$. 
Hence, $\mathbb{E}[|{Q}(i)|]$ in~(3.10),
$\mathbb{E}[|{Y}_{\boldsymbol{f}_m}(i)|]$ in~(3.13) and 
$\mathbb{E}[|{W}_{\boldsymbol{f},L,k}(i)|]$ in~(3.14)
all tend to zero such that the analysis here does not work.
\end{remark}

\vskip 0.3cm    
\begin{remark} 
Set $\hat{\epsilon}_{q}(t)=\epsilon_{q}(t)/{q}(t)$ and 
$\hat{\epsilon}_{y_m}(t)=\epsilon_{y_m}(t)/{y}_m(t)$ for any $2\leqslant m\leqslant r-1$. 
We have 
\begin{align}
\hat{\epsilon}_{q}(t)=\hat{\epsilon}_{y_m}(t)=
O(\sigma_i n^{\alpha-1})
\end{align} as the equations shown in Theorem~3.4.
\end{remark}

\vskip 0.3cm    
 \begin{remark}
In the following sections, in order to express the formulas concise and clear, 
unless otherwise specified, we often suppress
the dependence on $t$
 in notation $q(t)$, $y_m(t)$, $w_{L,k}(t)$, $\epsilon_q(t)$, $\epsilon_{y_m}(t)$
 and $\epsilon_{w_{L,k}}(t)$  in Section~4. 
\end{remark}

\section{Analysis of the high linear-girth process}

For any integer $0\leqslant i< {M}$,
let $\mathcal{G}(i)$ denote the event that the
estimates in Theorem~3.4 hold for any
 $\boldsymbol{f}_m\in {K}_m(j)$ with $2\leqslant m\leqslant r-1$,
 $\boldsymbol{f}\in {Q}(j)$, 
$L\in \mathcal{L}$ and $0\leqslant k\leqslant e_L-2$
when $0\leqslant j\leqslant i$.
The initial conditions in $\mathbb{G}(0)$ satisfy the equations in~(3.15)-(3.17), 
which implies that the event $\mathcal{G}(0)$ always holds.
For any fixed constant $\mu>0$, assume that the event $\mathcal{G}(i)$
holds with probability  at least $1-n^{-\mu}$. 
Following the approach in~\cite{glock20,boh19} to consider sparse Steiner triple systems,  
we analyze the random greedy high linear-girth linear $r$-uniform hypergraph process in this section
and finally show that the event $\mathcal{G}({{M}})$ 
 holds with high probability to complete the proof of Theorem~3.4. 

Compared with the analysis of triple systems, 
conditioned on the event $\mathcal{G}(i)$
holds, it is challenging to establish 
$\mathbb{E}[\Delta {Y}_{\boldsymbol{f}_m}^\pm(i)\,|{\mathcal{F}}_i]$ and 
$\mathbb{E}[\Delta {W}_{\boldsymbol{f},L,k}^\pm(i)\,|{\mathcal{F}}_i]$,
 $|\Delta {Y}_{\boldsymbol{f}_m}^{\pm}(i)|$ and $|\Delta {W}_{\boldsymbol{f},L,k}^{\pm}(i)|$
 in higher uniformities such that they comply with martingale 
concentration inequalities.
 We bound the probability
of the event 
that the tracked variable $ {Y}^{\pm}_{\boldsymbol{f}_m}(i)\geqslant 0$ 
for some $\boldsymbol{f}_m\in {K}_m(i)$ 
with $2\leqslant m\leqslant r-1$,   or $ {W}^{\pm}_{\boldsymbol{f},L,k}(i)\geqslant 0$
for some $\boldsymbol{f}\in {Q}(i)$, 
$L\in \mathcal{L}$ and $0\leqslant k\leqslant e_L-2$ such that
an application of the union bound for  all variables over all steps $i\leqslant M$ shows
that the probability of the occurrence of the event $\mathcal{G}(i+1)$
is extremely high.

For technical reasons we regulate $\Delta  {Y}_{\boldsymbol{f}_m}(i)=0$ as soon as
$\boldsymbol{f}_m\notin {K}_m(i+1)$ and $\Delta {W}_{\boldsymbol{f},L,k}(i)=0$
as soon as $\boldsymbol{f}\notin {Q}(i+1)$ since the relevant
structural constraints are violated.
In the following, in order to analyze $\Delta {Y}_{\boldsymbol{f}_m}(i)$
(resp. $\Delta {Y}_{\boldsymbol{f}_m}^{\pm}(i)$) and $\Delta {W}_{\boldsymbol{f},L,k}(i)$
(resp. $\Delta {W}_{\boldsymbol{f},L,k}^{\pm}(i)$),
we always assume $\boldsymbol{f}_m\in {K}_m(i+1)$
and $\boldsymbol{f}\in {Q}(i+1)$. 

\subsection{The one-step (expected) changes of 
$|{Y}_{\boldsymbol{f}_{m}}(i)|$ 
and $|{W}_{\boldsymbol{f},L,k}(i)|$}%

\subsubsection{$\mathbb{E}[\Delta {Y}_{\boldsymbol{f}_{m}}(i)\,| \mathcal{F}_i]$ 
}

Given one $\boldsymbol{f}_{m}\in {K}_{m}(i)$ with $2\leqslant m\leqslant r-1$,
 consider $\mathbb{E}[\Delta {Y}_{\boldsymbol{f}_{m}}(i)\,| \mathcal{F}_i]$.
Choosing one element from ${Y}_{\boldsymbol{f}_{m}}(i)$, denoted  as
 $\boldsymbol{f}_{m}^{c}$, let $\boldsymbol{f}=\boldsymbol{f}_{m}\cup \boldsymbol{f}_{m}^{c}$.
We enumerate all possibilities to remove  an
 $r$-set from ${Q}(i)$ such  that 
$\boldsymbol{f}_{m}^{c}$ is removed from ${Y}_{\boldsymbol{f}_{m}}(i)$. 
The first way is that the chosen $r$-set makes $\boldsymbol{f}$ not the vertex set 
of an $r$-clique. Let ${Q}_{\boldsymbol{f}_{m}^{c}}(i)$
denote the collection of such available $r$-sets in ${Q}(i)$.
The second way is that
the chosen $r$-set makes
$\boldsymbol{f}$ still the vertex set of an $r$-clique, while
$\boldsymbol{f}$ is not available anymore. 
Note that a bijection is established between such $r$-sets and 
the elements in $\bigcup_{L\in \mathcal{L}}{W}_{\boldsymbol{f},L,e_L-2}(i)$,
where no $r$-sets are associated with these two ways because $L\in \mathcal{L}$ is linear.
 Combining these two ways together,
it follows that
 \begin{align}
\mathbb{E}[\Delta {Y}_{\boldsymbol{f}_{m}}(i)\,| \mathcal{F}_i]
&=-\sum_{{\boldsymbol{f}_{m}^{c}}\in {Y}_{\boldsymbol{f}_{m}}(i)}
\frac{|{Q}_{\boldsymbol{f}_{m}^{c}}(i)|+|\bigcup_{L\in \mathcal{L}}{W}_{\boldsymbol{f},L,e_L-2}(i)|}{|{Q}(i)|}.
\end{align}

For $\boldsymbol{f}$, $\boldsymbol{g}\in {Q}(i)$ with  $\boldsymbol{f}\neq \boldsymbol{g}$,
since $\mathbb{H}(i)+\{\boldsymbol{f},\boldsymbol{g}\}$ may 
contain multiple linear $r$-graphs $L\in\mathcal{L}$, it might not be true to estimate 
$|\bigcup_{L\in \mathcal{L}}{W}_{\boldsymbol{f},L,e_L-2}(i)|$ 
as $\sum_{L\in \mathcal{L}}|{W}_{\boldsymbol{f},L,e_L-2}(i)|$.
To overcome this kind of possibility for overcounting, for any $\boldsymbol{f}\in {Q}(i)$, 
define $\Upsilon_{\boldsymbol{f}}(i)$ as
\begin{align}
\Upsilon_{\boldsymbol{f}}(i)&=\Bigl\{\boldsymbol{g}\in {Q}(i)\backslash\{\boldsymbol{f}\}: 
\mathbb{H}(i)+\{\boldsymbol{f},\boldsymbol{g}\}\text{\ contains multiple }L\in \mathcal{L}\text{ containing }\{\boldsymbol{f},\boldsymbol{g}\}\Bigr\},
\end{align}
which means that the selection of $\boldsymbol{g}\in {Q}(i)\backslash\{\boldsymbol{f}\}$
into $\mathbb{H}(i)$ makes $\boldsymbol{f}$ unavailable more than one time.
Rearrange the equation in~(4.1) to be
\begin{align}
\mathbb{E}[\Delta {Y}_{\boldsymbol{f}_{m}}(i)\,| \mathcal{F}_i]
&=-\sum_{{\boldsymbol{f}_{m}^{c}}\in {Y}_{\boldsymbol{f}_{m}}(i)}
\frac{|{Q}_{\boldsymbol{f}_{m}^{c}}(i)|+\sum_{L\in \mathcal{L}}|{W}_{\boldsymbol{f},L,e_L-2}(i)|+O(|\Upsilon_{\boldsymbol{f}}(i)|)}{|{Q}(i)|}.
\end{align}

We  analyze the
enumerative formula of $|{Q}_{\boldsymbol{f}_{m}^{c}}(i)|$ in~(4.3). Recall that ${Q}_{\boldsymbol{f}_{m}^{c}}(i)$
is the collection of $r$-sets in ${Q}(i)$ such that its choice into $\mathbb{H}(i)$
as an edge makes $\boldsymbol{f}=\boldsymbol{f}_{m}\cup \boldsymbol{f}_{m}^{c}$ not the vertex set 
of an $r$-clique in $\mathbb{G}(i+1)$. 

\vskip 0.3cm 
\noindent\textbf{Claim 4.1.1}\ 
\begin{align*}
|{Q}_{\boldsymbol{f}_{m}^{c}}(i)|
=\biggl[{r\choose 2}-{m\choose 2}\biggr]\cdot\bigl( {y}_2
\pm \epsilon_{y_2}\bigr)+O\bigl(n^{r-3}\bigr).
\end{align*}

\begin{proof}[Proof of Claim 4.1.1]\ 
Let $\bar{\boldsymbol{f}}_s\subseteq \boldsymbol{f}$ with 
$|\bar{\boldsymbol{f}}_s|=s$. 
Denote ${Q}_{\boldsymbol{f}_{m}^{c}}^{\bar{\boldsymbol{f}}_s}(i)\subseteq {Q}_{\boldsymbol{f}_{m}^{c}}(i)$
as the collection of available $r$-sets 
such that every $r$-set in ${Q}_{\boldsymbol{f}_{m}^{c}}^{\bar{\boldsymbol{f}}_s}(i)$
exactly contains the vertices $\bar{\boldsymbol{f}}_s$ in $\boldsymbol{f}$.
Since $\boldsymbol{f}_{m}\in {K}_m(i+1)$, to ensure 
that the removal of the edges of any one of these corresponding $r$-cliques 
in ${Q}_{\boldsymbol{f}_{m}^{c}}^{\bar{\boldsymbol{f}}_s}(i)$
results in $\boldsymbol{f}$  not the vertex set of an $r$-clique, 
we have $2\leqslant s\leqslant r$ and $|\bar{\boldsymbol{f}}_s\cap \boldsymbol{f}_{m}|\leqslant 1$.

Let ${\boldsymbol{f}_{m}\choose j}\oplus{\boldsymbol{f}_{m}^{c}\choose s-j}$
denote the collection of $s$-sets consisting of
the union of $j$ vertices in $\boldsymbol{f}_{m}$ and $s-j$ vertices in $\boldsymbol{f}_{m}^{c}$. 
We have
$\bar{\boldsymbol{f}}_s\in \cup_{j=0}^1{\boldsymbol{f}_{m}\choose j}\oplus{\boldsymbol{f}_{m}^{c}\choose s-j}$
and $|{Q}_{\boldsymbol{f}_{m}^{c}}(i)|$ is decomposed into
\begin{align}
|{Q}_{\boldsymbol{f}_{m}^{c}}(i)|
=\sum_{s=2}^{r-m}
\sum_{\bar{\boldsymbol{f}}_s\in {{\boldsymbol{f}_{m}^{c}\choose s}}}|{Q}_{\boldsymbol{f}_{m}^{c}}^{\bar{\boldsymbol{f}}_s}(i)|+
\sum_{s=2}^{r-m+1}
\sum_{\bar{\boldsymbol{f}}_s\in {{\boldsymbol{f}_{m}\choose 1}\oplus{\boldsymbol{f}_{m}^{c}\choose s-1}}}
|{Q}_{\boldsymbol{f}_{m}^{c}}^{\bar{\boldsymbol{f}}_s}(i)|.
\end{align}
Apply inclusion-exclusion to obtain $|{Q}_{\boldsymbol{f}_{m}^{c}}^{\bar{\boldsymbol{f}}_s}(i)|$ as
\begin{align*}
|{Q}_{\boldsymbol{f}_{m}^{c}}^{\bar{\boldsymbol{f}}_s}(i)|
&=|{Y}_{\bar{\boldsymbol{f}}_s}(i)|-\sum_{\boldsymbol{t}_1\in{\boldsymbol{f}
\backslash \bar{\boldsymbol{f}}_s\choose 1}}|{Y}_{\bar{\boldsymbol{f}}_s\cup \boldsymbol{t}_1}(i)|+\dots+ 
\sum_{\boldsymbol{t}_{r-s}\in{\boldsymbol{f}
\backslash \bar{\boldsymbol{f}}_s\choose r-s}}(-1)^{r-s}|{Y}_{\bar{\boldsymbol{f}}_s\cup \boldsymbol{t}_{r-s}}(i)|\\
&=\sum_{j=0}^{r-s}\sum_{\boldsymbol{t}_j\in {\boldsymbol{f}
\backslash \bar{\boldsymbol{f}}_s\choose j}}(-1)^j|{Y}_{\bar{\boldsymbol{f}}_s\cup \boldsymbol{t}_j}(i)|.
\end{align*}
 Combining with the equation in (4.4), it follows that
\begin{align}
|{Q}_{\boldsymbol{f}_{m}^{c}}(i)|
&=\sum_{s=2}^{r-m}\sum_{j=0}^{r-s}\sum_{\bar{\boldsymbol{f}}_s\in {{\boldsymbol{f}_{m}^{c}\choose s}}}
\sum_{\boldsymbol{t}_j\in {\boldsymbol{f}
\backslash \bar{\boldsymbol{f}}_s\choose j}}(-1)^j|{Y}_{\bar{\boldsymbol{f}}_s\cup \boldsymbol{t}_j}(i)|\notag\\
&\quad+\sum_{s=2}^{r-m+1}\sum_{j=0}^{r-s}\sum_{\bar{\boldsymbol{f}}_s\in {{\boldsymbol{f}_{m}\choose 1}
\oplus{{\boldsymbol{f}_{m}^{c}\choose s-1}}}}\sum_{\boldsymbol{t}_j\in {\boldsymbol{f}
\backslash \bar{\boldsymbol{f}}_s\choose j}}(-1)^j|{Y}_{\bar{\boldsymbol{f}}_s\cup \boldsymbol{t}_j}(i)|.
\end{align}

As to the equation of $|{Y}_{\boldsymbol{f}_m}(i)|$ for any $2\leqslant m\leqslant r-1$
in (3.16), $|{Y}_{\bar{\boldsymbol{f}}_s\cup \boldsymbol{t}_j}(i)|$ with $(s,j)=(2,0)$
dominates the sum on the right side of (4.5), then 
 $|{Q}_{\boldsymbol{f}_{m}^{c}}(i)|$ satisfies
\begin{align*}
|{Q}_{\boldsymbol{f}_{m}^{c}}(i)|
&=\biggl[{r-m\choose 2}+m(r-m)\biggl]\cdot\bigl( {y}_2
\pm \epsilon_{y_2}\bigr)+O\bigl(n^{r-3}\bigr),
\end{align*}
where 
${r-m\choose 2}$ enumerates $|{Y}_{\bar{\boldsymbol{f}}_s}(i)|$ when $\bar{\boldsymbol{f}}_s\in {
{\boldsymbol{f}_{m}^{c}\choose 2}}$,
$m(r-m)$ enumerates $|{Y}_{\bar{\boldsymbol{f}}_s}(i)|$  when $\bar{\boldsymbol{f}}_s\in {{\boldsymbol{f}_{m}\choose 1}
\oplus{\boldsymbol{f}_{m}^{c}\choose 1}}$, and 
$O(n^{r-3})$ is derived from the terms when $(s,j)\neq(2,0)$ in (4.5).
Note that ${r-m\choose 2}+m(r-m)={r\choose 2}-{m\choose 2}$, we complete the proof
of Claim 4.1.1.
\end{proof}

\subsubsection{$\mathbb{E}[\Delta {W}_{\boldsymbol{f},L,k}(i)\,|\mathcal{F}_i]$ 
}

Given $\boldsymbol{f}\in {Q}(i)$, $L\in \mathcal{L}$ and $0\leqslant k\leqslant e_L-2$,
we now consider $\mathbb{E}[\Delta {W}_{\boldsymbol{f},L,k}(i)\,|\mathcal{F}_i]$.
We consider all possibilities to make $|{W}_{\boldsymbol{f},L,k}(i)|$ changed by
adding an available $r$-set into $\mathbb{H}(i)$. 

Choose one $L'\in {W}_{\boldsymbol{f},L,k}(i)$, we firstly 
enumerate ways to remove $L'$ from ${W}_{\boldsymbol{f},L,k}(i)$ 
by deleting an $r$-set in ${Q}(i)\backslash \{\boldsymbol{f}\}$, 
which implies that some $\boldsymbol{g}\in (L'\cap {Q}(i))\backslash \{\boldsymbol{f}\}$ becomes unavailable.
Given one $\boldsymbol{g}\in (L'\cap {Q}(i))\backslash \{\boldsymbol{f}\}$,
similarly, the ways to 
make $\boldsymbol{g}$ unavailable are in two cases. %
One case is that $\boldsymbol{g}$ is not the vertex set 
of an $r$-clique. 
Let ${Q}_{\boldsymbol{g}}(i)$ 
denote the collection of such available $r$-sets in ${Q}(i)$.
 The other case is that the chosen $r$-set makes 
$\boldsymbol{g}$ still the vertex set of an $r$-clique, while 
$\boldsymbol{g}$ is unavailable. 
The number of such $r$-sets is counted as $\sum_{K\in \mathcal{L}}
|{W}_{\boldsymbol{g},K,e_K-2}(i)|+O(|\Upsilon_{\boldsymbol{g}}(i)|)$
as the analysis of $\mathbb{E}[\Delta {Y}_{\boldsymbol{f}_{m}}(i)\,| \mathcal{F}_i]$ in
Subsection~4.1.1, where $\Upsilon_{\boldsymbol{g}}(i)$ is defined in~(4.2).  
Define $\Psi_{L'}(i)$ as another
overcounting collection of $r$-sets to be 
\begin{align}
\Psi_{L'}(i)&=\Bigl\{\boldsymbol{h}\in {Q}(i)\backslash\{\boldsymbol{f}\}: 
\text{two }r\text{-sets in }(L'\cap{Q}(i))\backslash \{\boldsymbol{h}\}\notag\\
&\qquad\qquad\qquad\qquad\qquad\qquad\qquad\qquad\qquad\text{ are  unavailable in }
\mathbb{H}(i)+\{\boldsymbol{h}\}\Bigr\},
\end{align}
which implies that the selection of $\boldsymbol{h}\in {Q}(i)\backslash\{\boldsymbol{f}\}$ 
into $\mathbb{H}(i)$ 
makes two different $r$-sets 
in $(L'\cap{Q}(i))\backslash \{\boldsymbol{h}\}$ unavailable. 
Hence, for any given $L'\in {W}_{\boldsymbol{f},L,k}(i)$,
the number of ways to remove $L'$ from ${W}_{\boldsymbol{f},L,k}(i)$ is counted by
\begin{align}
\sum_{\boldsymbol{g}\in (L'\cap {Q}(i))\backslash\{\boldsymbol{f}\}}
\Bigl(|{Q}_{\boldsymbol{g}}(i)|+\sum_{K\in \mathcal{L}}|{W}_{\boldsymbol{g},K,e_K-2}(i)|
+O(|\Upsilon_{\boldsymbol{g}}(i)|)\Bigr)+O\bigl(|\Psi_{L'}(i)|\bigr).
\end{align}

Secondly, 
for $k\geqslant 1$, the ways to make one $L'\in {W}_{\boldsymbol{f},L,k-1}(i)$ move into 
${W}_{\boldsymbol{f},L,k}(i+1)$ 
are counted by 
\begin{align}
\sum_{L'\in {W}_{\boldsymbol{f},L,k-1}(i)}(e_L-k)+O\bigl(|\Lambda_{\boldsymbol{f},L,k-1}(i)|\bigr),
\end{align}
where $\Lambda_{\boldsymbol{f},L,k-1}(i)$ is the last overcounting 
collection  of  $r$-sets to be
\begin{align}
\Lambda_{\boldsymbol{f},L,k-1}(i)&=\Bigl\{\boldsymbol{h}\in L'\cap {Q}(i)\backslash\{\boldsymbol{f}\}
:
\text{an }r\text{-set in }(L'\cap{Q}(i))\backslash \{\boldsymbol{h}\}\notag\\
&\qquad\qquad\qquad\text{ is  unavailable in }
\mathbb{H}(i)+\{\boldsymbol{h}\},\text{ where }L'\in {W}_{\boldsymbol{f},L,k-1}(i)\Bigr\},
\end{align}
that is, the selection of $\boldsymbol{h}\in L'\cap {Q}(i)\backslash\{\boldsymbol{f}\}$
for some $L'\in {W}_{\boldsymbol{f},L,k-1}(i)$
 into $\mathbb{H}(i)$ as an edge makes another $r$-set 
of $(L'\cap {Q}(i))\backslash\{\boldsymbol{h}\}$ unavailable. 

Considering all these effects shown in~(4.7) and~(4.8) together, 
it follows that
\begin{align}
&\mathbb{E}[\Delta {W}_{\boldsymbol{f},L,k}(i)\,|\mathcal{F}_i]\notag\\&=- \frac{1}{|{Q}(i)|}\sum_{L'\in {W}_{\boldsymbol{f},L,k}(i)}
\biggl[\sum_{\boldsymbol{g}\in (L'\cap {Q}(i))\backslash\{\boldsymbol{f}\}}
\Bigl(|{Q}_{\boldsymbol{g}}(i)|+\sum_{K\in \mathcal{L}}|{W}_{\boldsymbol{g},K,e_K-2}(i)|
+O(|\Upsilon_{\boldsymbol{g}}(i)|)\Bigr)+O\bigl(|\Psi_{L'}(i)|\bigr)\biggr]\notag\\&
\qquad+ \frac{\mathbbm{1}_{\{k\geqslant 1\}}}{|{Q}(i)|}\biggl[\sum_{L'\in {W}_{\boldsymbol{f},L,k-1}(i)}(e_L-k)+O\bigl(|\Lambda_{\boldsymbol{f},L,k-1}(i)|\bigr)\biggr]. 
\end{align}

We analyze the enumerative formula of $|{Q}_{\boldsymbol{g}}(i)|$ in~(4.10). 
Recall that ${Q}_{\boldsymbol{g}}(i)$
is the collection of $r$-sets in ${Q}(i)$
such that its selection into $\mathbb{H}(i)$ as an edge 
makes $\boldsymbol{g}$ not the vertex set 
of an $r$-clique in $\mathbb{G}(i+1)$, 
where $\boldsymbol{g}\in (L'\cap {Q}(i))\backslash\{\boldsymbol{f}\}$
for some $L'\in {W}_{\boldsymbol{f},L,k}(i)$.

\vskip 0.3cm
\noindent\textbf{Claim 4.1.2}\  
\begin{align*}
|{Q}_{\boldsymbol{g}}(i)|
={r\choose 2}\cdot\bigl( {y}_2
\pm \epsilon_{y_2}\bigr)+O\bigl(n^{r-3}\bigr).
\end{align*}

\begin{proof}[Proof of Claim 4.1.2] 
Let $\bar{\boldsymbol{g}}_s\subseteq \boldsymbol{g}$ with $|\bar{\boldsymbol{g}}_s|=s$
for an integer $s$ satisfying $2\leqslant s\leqslant r$. 
Denote ${Q}_{\boldsymbol{g}}^{\bar{\boldsymbol{g}}_s}(i)\subseteq {Q}_{\boldsymbol{g}}(i)$
to be the collection of available $r$-sets such that every $r$-set in 
${Q}_{\boldsymbol{g}}^{\bar{\boldsymbol{g}}_s}(i)$
exactly contains 
 the vertices $\bar{\boldsymbol{g}}_s$ in $\boldsymbol{g}$.
 Hence, we rewrite
$|{Q}_{\boldsymbol{g}}(i)|$ into 
\begin{align}
|{Q}_{\boldsymbol{g}}(i)|=\sum_{s=2}^{r}
\sum_{\bar{\boldsymbol{g}}_s\in {\boldsymbol{g}\choose s}}|{Q}_{\boldsymbol{g}}^{\bar{\boldsymbol{g}}_s}(i)|.
\end{align}
We also apply the inclusion-exclusion to obtain $|{Q}_{\boldsymbol{g}}^{\bar{\boldsymbol{g}}_s}(i)|$ as
\begin{align*}
|{Q}_{\boldsymbol{g}}^{\bar{\boldsymbol{g}}_s}(i)|
&=|{Y}_{\bar{\boldsymbol{g}}_s}(i)|-\sum_{\boldsymbol{t}_1\in{\boldsymbol{g}
\backslash \bar{\boldsymbol{g}}_s\choose 1}}|{Y}_{\bar{\boldsymbol{g}}_s\cup \boldsymbol{t}_1}(i)|+\dots+
\sum_{\boldsymbol{t}_{r-s}\in{\boldsymbol{g}\backslash \bar{\boldsymbol{g}}_s\choose r-s}}(-1)^{r-s}|{Y}_{\bar{\boldsymbol{g}}_s\cup \boldsymbol{t}_{r-s}}(i)|\\
&=\sum_{j=0}^{r-s}\sum_{\boldsymbol{t}_j\in{\boldsymbol{g}\backslash \bar{\boldsymbol{g}}_s\choose j}}(-1)^j|{Y}_{\bar{\boldsymbol{g}}_s\cup \boldsymbol{t}_j}(i)|.
\end{align*}
 Combining with the equation in (4.11), it follows that
\begin{align*}
|{Q}_{\boldsymbol{g}}(i)|
&=\sum_{s=2}^{r}\sum_{j=0}^{r-s}\sum_{\bar{\boldsymbol{g}}_s\in {\boldsymbol{g}\choose s}}
\sum_{\boldsymbol{t}_j\in{\boldsymbol{g}\backslash \bar{\boldsymbol{g}}_s\choose j}}(-1)^j|{Y}_{\bar{\boldsymbol{g}}_s\cup \boldsymbol{t}_j}(i)|,
\end{align*}
in which
\begin{align*}
\sum_{\bar{\boldsymbol{g}}_s\in {\boldsymbol{g}\choose s}}
\sum_{\boldsymbol{t}_j\in{\boldsymbol{g}\backslash \bar{\boldsymbol{g}}_s\choose j}}|{Y}_{\bar{\boldsymbol{g}}_s\cup \boldsymbol{t}_j}(i)|
&=\sum_{\bar{\boldsymbol{g}}_{s+j}\in {\boldsymbol{g}\choose s+j}}{s+j\choose s}
|{Y}_{\bar{\boldsymbol{g}}_{s+j}}(i)|
\end{align*}
because each $\bar{\boldsymbol{g}}_{s+j}\in  {\boldsymbol{g}\choose s+j}$ on the right side
is counted ${s+j\choose s}$ times to be $\bar{\boldsymbol{g}}_s\cup \boldsymbol{t}_j$ on the left side.
We further have 
\begin{align}
|{Q}_{\boldsymbol{g}}(i)|
&=\sum_{s=2}^{r}\sum_{j=0}^{r-s}\sum_{\bar{\boldsymbol{g}}_{s+j}\in {\boldsymbol{g}\choose s+j}}(-1)^j{s+j\choose s}
|{Y}_{\bar{\boldsymbol{g}}_{s+j}}(i)|. 
\end{align} 
Hence, $|{Q}_{\boldsymbol{g}}(i)|$ can be regarded as the sum of all elements in the $(r-1)\times (r-1)$
upper triangular matrix below
$$\small{
\left(\begin{array}{ccc}
\sum\limits_{\bar{\boldsymbol{g}}_{2}\in {\boldsymbol{g}\choose 2}}(-1)^0{2\choose 2}|{Y}_{\bar{\boldsymbol{g}}_{2}}(i)|&\cdots&\sum\limits_{\bar{\boldsymbol{g}}_{r}\in {\boldsymbol{g}\choose r}}(-1)^{r-2}{r\choose 2}|{Y}_{\bar{\boldsymbol{g}}_{r}}(i)|\\
\vdots&\ddots
&\vdots\\
\sum\limits_{\bar{\boldsymbol{g}}_{r}\in {\boldsymbol{g}\choose r}}(-1)^{0}{r\choose r}|{Y}_{\bar{\boldsymbol{g}}_{r}}(i)|&\cdots&0
\end{array}\right)},
$$
where the row corresponds to the index $s$ and the column corresponds to the index $j$ in (4.12), respectively.
Summing these elements again according to all back diagonal lines, 
\begin{align*}
|{Q}_{\boldsymbol{g}}(i)|&=\sum_{s=2}^{r}\sum_{\bar{\boldsymbol{g}}_{s}\in {\boldsymbol{g}\choose s}}
\sum_{j=2}^{s}(-1)^{s-j}{s\choose j}|{Y}_{\bar{\boldsymbol{g}}_{s}}(i)|
=\sum_{s=2}^{r}\sum_{\bar{\boldsymbol{g}}_{s}\in {\boldsymbol{g}\choose s}}
(-1)^s(s-1)|{Y}_{\bar{\boldsymbol{g}}_{s}}(i)|,
\end{align*}
where $\sum_{j=2}^{s}(-1)^{s-j}{s\choose j}=(-1)^s(s-1)$.
According to the expressions of $|{Y}_{\bar{\boldsymbol{g}}_{s}}(i)|$ in~(3.16) for 
$2\leqslant s\leqslant r$, the term $|{Y}_{\bar{\boldsymbol{g}}_{2}}(i)|$ dominates the sum on 
the right side of~(4.12). As $s=2$, as the equations shown in~(3.20) and~(3.23), 
we complete the proof of Claim~4.1.2, where the last term $O(n^{r-3})$
comes from those terms when $s\neq 2$. 
\end{proof}

\subsubsection{$\mathbb{E}[\Delta {Y}_{\boldsymbol{f}_m}(i)\,|\mathcal{F}_i]$ 
and $\mathbb{E}[\Delta {W}_{\boldsymbol{f},L,k}(i)\,|\mathcal{F}_i]$ again
}

On the basis of the equations 
in~(4.3) and~(4.10), the enumerative formulas in Claim~4.1.1 and Claim~4.1.2,
in order to obtain $\mathbb{E}[\Delta {Y}_{\boldsymbol{f}_m}(i)\,|\mathcal{F}_i]$
for any $\boldsymbol{f}_m\in {K}_m(i)$ 
with $2\leqslant m\leqslant r-1$,
and $\mathbb{E}[\Delta {W}_{\boldsymbol{f},L,k}(i)\,|\mathcal{F}_i]$ for any 
 $\boldsymbol{f}\in {Q}(i)$, 
$L\in \mathcal{L}$
and  $0\leqslant k\leqslant e_L-2$,
we also require the following estimates hold on
$|\Upsilon_{\boldsymbol{f}}(i)|$ in~(4.2), 
 $|\Psi_{L'}(i)|$ in~(4.6) and $|\Lambda_{\boldsymbol{f},L,k-1}(i)|$ in~(4.9).

\vskip 0.3cm
\noindent\textbf{Claim 4.1.3} \  
Let $\mathcal{A}(i)$ denote the event that
\begin{align}
|\Upsilon_{\boldsymbol{f}}(i)|&=O\bigl(n^{\alpha'+r-3}(nt_{{M}})^{\ell-2}\bigr),\\
|\Psi_{L'}(i)|&=O\bigl(n^{\alpha'+r-3}(nt_{{M}})^{\ell-2}\bigr),\\
\mathbbm{1}_{\{k\geqslant 1\}}\cdot|\Lambda_{\boldsymbol{f},L,k-1}(i)|&=O\bigl(n^{\alpha'+(r-1)(e_L-k)-2}(nt_{{M}})^{k-1}\bigr),
\end{align}
hold for any $\boldsymbol{f}\in {Q}(i)$, $L\in \mathcal{L}$
and $L'\in {W}_{\boldsymbol{f},L,k}(i)$ with $0\leqslant k\leqslant e_L-2$, 
where $\alpha_0= \frac{\ell-2}{\ell-1}$ and $\alpha'\in (\alpha_0,\alpha)$.
On the condition of  the event $\mathcal{G}(i)$, 
the event $\mathcal{A}(i)$ holds with probability $1-o(n^{-\mu})$, 
that is,
\begin{align*}
\mathbb{P}\bigl[ \bar{\mathcal{A}}(i)\,|\mathcal{G}(i)\bigr]=o(n^{-\mu}).
\end{align*}
The proof of each equation in~(4.13)-(4.15) is left to be analyzed in Section 5.2.

Assuming the event $\mathcal{G}(i)\cap \mathcal{A}(i)$ holds, we now have 
$\mathbb{E}[\Delta {Y}_{\boldsymbol{f}_m}(i)\,|\mathcal{F}_i]$
and $\mathbb{E}[\Delta {W}_{\boldsymbol{f},L,k}(i)\,|\mathcal{F}_i]$ as follows.

\vskip 0.3cm
\noindent\textbf{Claim 4.1.4}\ On the condition of the event $\mathcal{G}(i)\cap \mathcal{A}(i)$,
for any $\boldsymbol{f}_m\in {K}_m(i)$ 
with $2\leqslant m\leqslant r-1$,
\begin{align*}
\mathbb{E}[\Delta {Y}_{\boldsymbol{f}_m}(i)\,|\mathcal{F}_i]
&= \frac{{y}_m'}{n^2}+ {O\Bigl(\sigma_i^\ell n^{\alpha+r-m-3}(nt_{{M}})^{\ell-2}\Bigr)}.
\end{align*}

\begin{proof}[Proof of Claim~4.1.4]
Note that
${\epsilon}_{y_2}=O(\sigma_i n^{\alpha+r-3})$ in~(3.23), 
\begin{align*}
\sum_{L\in \mathcal{L}}\epsilon_{w_{L,e_L-2}}=O\Bigl(\sigma_i^\ell n^{\alpha+r-3}(nt_{{M}})^{\ell-2}\Bigr)
\end{align*} in~(3.24), and
the equation of $|\Upsilon_{\boldsymbol{f}}(i)|$ for any $\boldsymbol{f}\in {Q}(i)$ shown in~(4.13),
\begin{align*}
\sum_{L\in \mathcal{L}}\epsilon_{w_{L,e_L-2}}\gg {\epsilon}_{y_2}\ \text{and}\ 
\sum_{L\in \mathcal{L}}\epsilon_{w_{L,e_L-2}}\gg |\Upsilon_{\boldsymbol{f}}(i)|.
\end{align*}

As the equation of $\mathbb{E}[\Delta {Y}_{\boldsymbol{f}_m}(i)\,|\mathcal{F}_i]$ 
in~(4.3), $|{Q}_{\boldsymbol{f}_m^{c}}(i)|$ in Claim~4.1.1,
and $\hat{\epsilon}_{q}=\hat{\epsilon}_{y_m}$ for any $2\leqslant m\leqslant r-1$ in~(3.28),
$\mathbb{E}[\Delta {Y}_{\boldsymbol{f}_m}(i)\,|\mathcal{F}_i]$ is rearranged as 
\begin{align*}
&\mathbb{E}[\Delta {Y}_{\boldsymbol{f}_m}(i)\,|\mathcal{F}_i]\notag\\
&=-  \frac{(1+O(\hat{\epsilon}_{y_m})){y}_m}{{q}}\biggl[
\biggl({r\choose 2}-
{m\choose 2}\biggr){y}_2+\sum_{L\in \mathcal{L}}{w}_{L,e_L-2}+O\Bigl(\sigma_i^\ell n^{\alpha+r-3}(nt_{{M}})^{\ell-2}\Bigr)\biggr].
\end{align*} 
Since  $\tilde{\xi}_i=O((nt_M)^{\ell-2})$ in~(3.12), ${q}$ in~(3.19), ${y}_m$
in~(3.20), ${w}_{L,k}$
in~(3.21) and $\hat{\epsilon}_{y_m}$ in~(3.28), we have 
 \begin{align}
\sum_{L\in \mathcal{L}} \frac{{w}_{L,e_L-2}}{{q}}&= \frac{\tilde{\xi}_i}{n^2},\notag\\
\frac{\hat{\epsilon}_{y_m} {y}_m}{{q}}\Bigl( {y}_2+ \sum_{L\in \mathcal{L}} {w}_{L,e_L-2}\Bigr)
&=O\Bigl(\sigma_i n^{\alpha+r-m-3}(nt_{{M}})^{\ell-2}\Bigr),\notag
\end{align}
and then 
\begin{align*}
\mathbb{E}[\Delta {Y}_{\boldsymbol{f}_m}(i)\,|\mathcal{F}_i]
&=-\biggl[\biggl({r\choose 2}-
{m\choose 2}\biggr) \frac{{y}_m{y}_2}{{q}}+ \frac{{y}_m\tilde{\xi}_i}{n^2}\biggr]+ 
O\Bigl(\sigma_i^\ell n^{\alpha+r-m-3}(nt_{{M}})^{\ell-2}\Bigr).
\end{align*}
Here, it is checked that 
\begin{align*}
\frac{{y}_m'}{n^2}=-\biggl[\biggl({r\choose 2}-
{m\choose 2}\biggr) \frac{{y}_m{y}_2}{{q}}+ \frac{{y}_m\tilde{\xi}_i}{n^2}\biggr],
\end{align*}
by differentiating $y_m$  in~(3.20) with respect to $t$,
the equations of $p_i$, $\xi_i$ and $\tilde{\xi}_i$ in~(3.5),~(3.9) and~(3.12),
to complete the proof of Claim~4.1.4.
\end{proof}

\vskip 0.3cm
\noindent\textbf{Claim 4.1.5}\ On the condition of the event 
$\mathcal{G}(i)\cap \mathcal{A}(i)$, 
\begin{align*}
\mathbb{E}[\Delta {W}_{\boldsymbol{f},L,k}(i)\,|\mathcal{F}_i]
&=\begin{cases} \frac{{w}_{L,0}'}{n^2}+O\bigl(\sigma_i^\ell n^{\alpha+(r-1)e_L-r-3}(nt_{{M}})^{\ell-2}\bigr),\quad \text{if }k=0;\\
 \frac{{w}_{L,k}'}{n^2}\pm \beta_k\sigma_i^{k+1} n^{\alpha+(r-1)(e_L-k)-r-2}(nt_{{M}})^{k-1},\quad \text{if\ }k\geqslant 1.
\end{cases}
\end{align*}

\begin{proof}[Proof of Claim~4.1.5]
In the case of $k\geqslant 1$, we consider $\mathbb{E}[\Delta {W}_{\boldsymbol{f},L,k}(i)|\mathcal{F}_i]$ 
by~(4.10). 
By the equations shown in~(3.23) and~(3.24),
\begin{align*}
\epsilon_{y_{2}}&=\sigma_i n^{\alpha+r-3},\\
\epsilon_{w_{L,k-1}}&=O\bigl(\sigma_i^{k+1} n^{\alpha+(r-1)(e_L-k)-2}(nt_{{M}})^{k-1}\bigr),\\
\sum_{L\in \mathcal{L}}\epsilon_{w_{L,e_L-2}}&=O\bigl(\sigma_i^\ell n^{\alpha+r-3}(nt_{{M}})^{\ell-2}\bigr).
\end{align*}
By the equations of $|\Upsilon_{\boldsymbol{g}}(i)|$, $|\Psi_{L'}(i)|$ 
and $|\Lambda_{\boldsymbol{f},L,k-1}(i)|$ in~(4.13)-(4.15), 
we further have 
\begin{align*}
\sum_{L\in \mathcal{L}}\epsilon_{w_{L,e_L-2}}\gg \epsilon_{y_{2}},&\qquad \sum_{L\in \mathcal{L}}\epsilon_{w_{L,e_L-2}}\gg |\Upsilon_{\boldsymbol{g}}(i)|,\\ 
\sum_{L\in \mathcal{L}}\epsilon_{w_{L,e_L-2}}\gg |\Psi_{L'}(i)|&,\qquad \epsilon_{w_{L,k-1}}\gg |\Lambda_{\boldsymbol{f},L,k-1}(i)|.
\end{align*}  
Hence, as the equation of $\mathbb{E}[\Delta {W}_{\boldsymbol{f},L,k}(i)\,|\mathcal{F}_i]$ in~(4.10),
 $|{Q}_{\boldsymbol{g}}(i)|$ in Claim~4.1.2, $\mathbb{E}[\Delta {W}_{\boldsymbol{f},L,k}(i)|\mathcal{F}_i]$ 
 is rearranged as 
\begin{align}
&\mathbb{E}[\Delta {W}_{\boldsymbol{f},L,k}(i)\,|\mathcal{F}_i]\notag\\
&=- \frac{({w}_{L,k}+O(\epsilon_{w_{L,k}}))}{(1+O(\hat{\epsilon}_{q})){q}}
\biggl[\bigl(e_L-k-1\bigr)\biggl(\binom{r}{2}{y}_2+\sum_{L\in \mathcal{L}}{w}_{L,e_L-2}\biggr)
+O\Bigl(\sigma_i^\ell n^{\alpha+r-3}(nt_{{M}})^{\ell-2}\Bigr)\biggr]\notag\\
&\qquad+\frac{1}{(1+O(\hat{\epsilon}_{q})){q}}
\biggl[\bigl(e_L-k\bigr){w}_{L,k-1}\pm
2(e_L-k)\epsilon_{w_{L,k-1}}\biggr].
\end{align}

By the equations of $\tilde{\xi}_i$, ${q}$, ${y}_2$ and ${w}_{L,k}$
  shown in~(3.12), (3.19)-(3.21), we have
\begin{align*}
 \frac{\binom{r}{2}{y}_2+\sum_{L\in \mathcal{L}}{w}_{L,e_L-2}}{{q}}&= 
 \frac{r^2(r-1)^2}{2n^2p_i}+ \frac{\tilde{\xi}_i}{n^2}.
\end{align*}
The first term on the right side of the equation in~(4.16) is reduced to
\begin{align}
&- \bigl(w_{L,k}+O(\epsilon_{w_{L,k}})\bigr)
\biggl[\bigl(e_L-k-1\bigr)\biggl( \frac{r^2(r-1)^2}{2n^2p_i}+ \frac{\tilde{\xi}_i}{n^2}\biggr)\bigl(1-O(\hat{\epsilon}_{q})\bigr)
+ O\Bigl(\sigma_i^\ell n^{\alpha-3}(nt_{{M}})^{\ell-2}\Bigr)\biggr]\notag\\
=&- w_{L,k}\bigl(e_L-k-1\bigr)\biggl( \frac{r^2(r-1)^2}{2n^2p_i}+ \frac{\tilde{\xi}_i}{n^2}\biggr)
+O\Bigl(\sigma_i^\ell n^{\alpha+(r-1)(e_L-k)-r-3}(nt_{{M}})^{k+\ell-2}\Bigr),
\end{align}
where the term $\hat{\epsilon}_{q}\cdot ( \frac{r^2(r-1)^2}{2n^2p_i}+ \frac{\tilde{\xi}_i}{n^2})$
is absorbed into $O(\sigma_i^\ell n^{\alpha-3}(nt_{{M}})^{\ell-2})$ by $\tilde{\xi}_i=O((nt_M)^{\ell-2})$ in~(3.12)
and $\hat{\epsilon}_{q}$ in~(3.29),
and then the term $\epsilon_{w_{L,k}}\cdot ( \frac{r^2(r-1)^2}{2n^2p_i}+ \frac{\tilde{\xi}_i}{n^2})$
is absorbed into 
\begin{align*}
O\bigl(\sigma_i^\ell n^{\alpha+(r-1)(e_L-k)-r-3}(nt_{{M}})^{k+\ell-2}\bigr)
\end{align*} by $\tilde{\xi}_i$ in~(3.12) and $\epsilon_{w_{L,k}}$ in~(3.24).
Specially, 
\begin{align*}
\epsilon_{w_{L,\ell-2}}\cdot \Bigl( \frac{r^2(r-1)^2}{2n^2p_i}+ \frac{\tilde{\xi}_i}{n^2}\Bigr)=
\Theta\bigl(\sigma_i^\ell n^{\alpha+(r-1)(e_L-k)-r-3}(nt_{{M}})^{k+\ell-2}\bigr)
\end{align*}
based on $\epsilon_{w_{L,k}}$ in~(3.24).

By the equations of ${w}_{L,k-1}$, $\epsilon_{w_{L,k-1}}$ and $\beta_k$
in~(3.21),~(3.24) and~(3.27), we have
\begin{align*}
{w}_{L,k-1}\cdot \hat{\epsilon}_{q}\ll \epsilon_{w_{L,k-1}}\ \text{and}\ 
\frac{2r!(e_L-k)\beta_{k-1}}{p_i^{\binom{r}{2}}\xi_i}<\beta_k.
\end{align*}
Hence, 
\begin{align*}
\frac{(e_L-k)\cdot {w}_{L,k-1}\cdot \hat{\epsilon}_{q}}{q}+\frac{2(e_L-k)\cdot\epsilon_{w_{L,k-1}}}{q}<\beta_{k}\sigma_i^{k+1} n^{\alpha+(r-1)(e_L-k)-r-2}(nt_{\textsc{M}})^{k-1}.
\end{align*}It follows that
the second term on the right side of the equation in~(4.16) is reduced to
\begin{align}
\bigl(e_L-k\bigr) \frac{{w}_{L,k-1}}{{q}}\pm
\beta_{k}\sigma_i^{k+1} n^{\alpha+(r-1)(e_L-k)-r-2}(nt_{\textsc{M}})^{k-1}.
\end{align}
Here, it is also checked that
\begin{align*}
 \frac{{w}_{L,k}'}{n^2}&=-{w}_{L,k}\bigl(e_L-k-1\bigr)\biggl( \frac{r^2(r-1)^2}{2n^2p_i}+ \frac{\tilde{\xi}_i}{n^2}\biggr)
 +\bigl(e_L-k\bigr) \frac{{w}_{L,k-1}}{{q}}
\end{align*}
by differentiating ${w}_{L,k}$ in (3.21) with respect to $t$. Combining the equations shown in (4.16)-(4.18), 
we complete the proof of Claim~4.2.4 in the case of $k\geqslant 1$,
where the error term in~(4.17) is negligible compared to the one
in~(4.18) because
\begin{align*}
\beta_k\sigma_i^{k+1} n^{\alpha+(r-1)(e_L-k)-r-2}(nt_{{M}})^{k-1}
\gg \sigma_i^\ell n^{\alpha+(r-1)(e_L-k)-r-3}(nt_{{M}})^{k+\ell-2}
\end{align*}
is reduced to $\beta_k\sigma_i^{k+1}\gg \sigma_i^\ell(\log n)^{-\lambda(\ell-1)}$
by $t_{{M}}$ in~(3.26)
that is clearly true
as $\sigma_i=\Theta(\log n)$ in~(3.25) and $\lambda> \frac{\ell}{\ell-1}$ in~(3.27).

For $k=0$, we have  the only error term in~(4.17), which is $O(\sigma_i^\ell n^{\alpha+(r-1)e_L-r-3}(nt_{{M}})^{\ell-2})$.
\end{proof}

\subsubsection{$|\Delta {Y}_{\boldsymbol{f}_{m}}(i)|$ 
and $|\Delta {W}_{\boldsymbol{f},L,k}(i)|$}

Next consider $|\Delta {Y}_{\boldsymbol{f}_{m}}(i)|$ for any $\boldsymbol{f}_m\in {K}_m(i)$ 
with $2\leqslant m\leqslant r-1$, and  $|\Delta {W}_{\boldsymbol{f},L,k}(i)|$
for any $\boldsymbol{f}\in {Q}(i)$, 
$L\in \mathcal{L}$ and $0\leqslant k\leqslant e_L-2$.
Let $\boldsymbol{e}_{i+1}\in {Q}(i)$ denote the available $r$-set that is added to $\mathbb{H}(i)$.
In fact, based on the analysis in Section~4.1.1 and Section~4.1.2,
we have the ways  to bound the one-step changes of $\Delta {Y}_{\boldsymbol{f}_{m}}(i)$ 
and  $\Delta {W}_{\boldsymbol{f},L,k}(i)$.
We formally characterize these ways into three boundedness parameters in higher
uniformities below.

Define  $\Pi_{\boldsymbol{f}_{m},\boldsymbol{e}_{i+1}}(i)$ 
as the collection of $(r-m)$-sets 
in ${Y}_{\boldsymbol{f}_{m}}(i)$ 
that are removed due to the addition of the available edge $\boldsymbol{e}_{i+1}$
into $\mathbb{H}(i)$, where $|\boldsymbol{e}_{i+1}\cap \boldsymbol{f}_{m}|\leqslant 1$
because $\boldsymbol{f}_{m}\in K_m(i+1)$.
Thus, we have
\begin{align}
|\Delta {Y}_{\boldsymbol{f}_{m}}(i)|&= O\bigl(|\Pi_{\boldsymbol{f}_{m},\boldsymbol{e}_{i+1}}(i)|\bigr).
\end{align}

To bound $\Delta {W}_{\boldsymbol{f},L,k}(i)$ for any $\boldsymbol{f}\in {Q}(i)$, 
$L\in \mathcal{L}$ and $0\leqslant k\leqslant e_L-2$,
 we firstly define  $\Pi_{\boldsymbol{f},L,k,\boldsymbol{e}_{i+1}}(i)$ 
as the collection of $L'\in {W}_{\boldsymbol{f},L,k}(i)$
with  $\boldsymbol{e}_{i+1}\in(L'\cap {Q}(i))\backslash \{\boldsymbol{f}\}$,
that is, $|\Pi_{\boldsymbol{f},L,k,\boldsymbol{e}_{i+1}}(i)|$
is the number of $L'$ that is removed from ${W}_{\boldsymbol{f},L,k}(i)$
due to $\boldsymbol{e}_{i+1}$ equals one available $r$-set in $L'\backslash \{\boldsymbol{f}\}$.
Specially, in
the case of $k\geqslant 1$, 
 $|\Pi_{\boldsymbol{f},L,k-1,\boldsymbol{e}_{i+1}}(i)|$ 
 refers to the growth in $\Delta {W}_{\boldsymbol{f},L,k}(i)$.
We also define 
$\Phi_{\boldsymbol{f},L,k,\boldsymbol{e}_{i+1}}(i)$ 
 as the collection of $L'\in {W}_{\boldsymbol{f},L,k}(i)$
that is removed from ${W}_{\boldsymbol{f},L,k}(i)$ 
due to the addition of $\boldsymbol{e}_{i+1}\notin L'$ to $\mathbb{H}(i)$.
Thus, we have
\begin{align}
|\Delta {W}_{\boldsymbol{f},L,k}(i)|&= 
O\Bigl(|\Pi_{\boldsymbol{f},L,k,\boldsymbol{e}_{i+1}}(i)|+|\Phi_{\boldsymbol{f},L,k,\boldsymbol{e}_{i+1}}(i)|+\mathbbm{1}_{\{k\geqslant 1\}}\cdot|\Pi_{\boldsymbol{f},L,k-1,\boldsymbol{e}_{i+1}}(i)|\Bigr).
\end{align}

On the basis of the equations in~(4.19) and~(4.20), 
in order to bound $|\Delta {Y}_{\boldsymbol{f}_{m}}(i)|$ 
 and $|\Delta {W}_{\boldsymbol{f},L,k}(i)|$,
we also require the following estimates to hold on
$|\Pi_{\boldsymbol{f}_{m},\boldsymbol{e}_{i+1}}(i)|$, $|\Phi_{\boldsymbol{f},L,k,\boldsymbol{e}_{i+1}}(i)|$
and $|\Pi_{\boldsymbol{f},L,k,\boldsymbol{e}_{i+1}}(i)|$. 

\vskip 0.3cm
\noindent \textbf{Claim 4.1.6}\ 
Let $\mathcal{B}(i)$ denote
the event that 
\begin{align}
\mathbbm{1}_{\{|\boldsymbol{f}_{m}\cap\boldsymbol{g}|\leqslant 1\}}\cdot|\Pi_{\boldsymbol{f}_{m},\boldsymbol{g}}(i)|&= O\bigl( n^{ \alpha'+r-m-1}\bigr),\\
\mathbbm{1}_{\{\boldsymbol{f}\neq\boldsymbol{g}\}}\cdot|\Pi_{\boldsymbol{f},L,k,\boldsymbol{g}}(i)|&= O\bigl( n^{\alpha'+(r-1)(e_L-k-2-\mathbbm{1}_{\{k<e_L-2\}})}\bigr),\\
\mathbbm{1}_{\{\boldsymbol{f}\neq\boldsymbol{g}\}}\cdot|\Phi_{\boldsymbol{f},L,k,\boldsymbol{g}}(i)|&= O\bigl( n^{\alpha'+ (r-1)(e_L-k)-r-1}(nt_{{M}})^{k}\bigr),
\end{align}
hold for any $\boldsymbol{f}_{m}\in {K}_m(i)$ with $2\leqslant m\leqslant r-1$, $\boldsymbol{f}$,
$\boldsymbol{g}\in {Q}(i)$, 
$L\in \mathcal{L}$ and $0\leqslant k\leqslant e_L-2$, where $\alpha'\in (\alpha_0,\alpha)$.
On the condition of the event $\mathcal{G}(i)$, the event $\mathcal{B}(i)$
holds with probability $1-o(n^{-\mu})$, that is, 
\begin{align*}
\mathbb{P}[\bar{\mathcal{B}}_i\,|\, \mathcal{G}_i]=o(n^{-\mu}).
\end{align*}
The proof of each equation in (4.21)-(4.23) is left to be analyzed in Section 5.2.

Assuming the event $\mathcal{G}(i)  \cap\mathcal{B}(i)$ holds, by Claim~4.1.6 and
the equations shown in~(4.19) and~(4.20), we now have 
$|\Delta {Y}_{\boldsymbol{f}_{m}}(i)|$ and $|\Delta {W}_{\boldsymbol{f},L,k}(i)|$
as follows.

\vskip 0.3cm
\noindent\textbf{Claim 4.1.7}\ On the condition of the event 
 $\mathcal{G}(i)  \cap\mathcal{B}(i)$, 
\begin{align*}
|\Delta {Y}_{\boldsymbol{f}_{m}}(i)|&=O\bigl(n^{ \alpha'+r-m-1}\bigr),\\
|\Delta {W}_{\boldsymbol{f},L,k}(i)|&=O\bigl(n^{ \alpha'+(r-1)(e_L-k)-r-1}(nt_{{M}})^k\bigr).
\end{align*}

\subsection{The one-step (expected) changes of $ {Y}^{\pm}_{{\boldsymbol{f}_{m}}}(i)$
and ${W}^{\pm}_{\boldsymbol{f},L,k}(i)$
}

\subsubsection{$\mathbb{E}\bigl[|\Delta {Y}^{\pm}_{{\boldsymbol{f}_{m}}}(i)||\mathcal{F}_i\bigr]$ 
and $|\Delta {Y}^{\pm}_{{\boldsymbol{f}_{m}}}(i)|$}

By~(2.1) and~(2.2), we have ${Y}^{\pm}_{{\boldsymbol{f}_{m}}}(i)=
\pm[|{Y}_{{\boldsymbol{f}_{m}}}(i)|-{y}_m(t)]-\epsilon_{y_m}(t)$
and 
\begin{align}
\Delta {Y}^{\pm}_{{\boldsymbol{f}_{m}}}(i)
&=\pm \biggl[\Delta {Y}_{{\boldsymbol{f}_{m}}}(i)- \frac{{y}_m'}{n^2}\biggr]- \frac{\epsilon_{y_m}'}{n^2}
+O\biggl(\sup_{\varsigma\in[t, t+ \frac{1}{n^2}]} \frac{|{y_m}''(\varsigma)|+|\epsilon_{y_m}''(\varsigma)|}{n^4}\biggr).
\end{align}

\vskip 0.3cm

\noindent\textbf{Claim 4.2.1}\  
For any $t\in [0, t_{{M}}]$ and $\boldsymbol{f}_{m}\in K_m(i)$ with $2\leqslant m\leqslant r-1$,
we have $\{{Y}^{\pm}_{{\boldsymbol{f}_{m}}}(i)\}_{i\geqslant 0}$
is a supermartingale
and 
\begin{align*}\mathbb{E}\bigl[|\Delta {Y}^{\pm}_{{\boldsymbol{f}_{m}}}(i)|\,|\mathcal{F}_i\bigr]=
O\Bigl(n^{\alpha+r-m-1}\bigl(n^\alpha+n^2t\bigr)^{-1}\Bigr).
\end{align*}
\begin{proof}[Proof of Claim~4.2.1]
In order to establish the desired supermartingale condition 
\begin{align*}
\mathbb{E}[\Delta {Y}^{\pm}_{{\boldsymbol{f}_{m}}}(i)\,|\mathcal{F}_i]\leqslant 0
\end{align*}
by Claim~4.1.4 and the equation shown in~(4.24), it is sufficient to show that the error function
$\epsilon_{y_m}$ in~(3.23) satisfies the following inequality 
\begin{align}
\epsilon_{y_m}'\gg \max\biggl\{ \sigma_i^\ell n^{\alpha+r-m-1}(nt_{{M}})^{\ell-2}, \ \sup_{\zeta\in [t,t+ \frac{1}{n^2}]} \frac{|{y}_m''(\zeta)|+|\epsilon_{y_m}''(\zeta)|}{n^2}\biggr\}.
\end{align}

 As the equations of 
 $\epsilon_{y_m}$ and $\sigma_i$ shown in~(3.23) and~(3.25), we have
\begin{align*}
\epsilon_{y_m}'= \frac{ n^{\alpha+r-m+1}}{n^\alpha+n^2t}\ 
\text{and}\ \epsilon_{y_m}''= - \frac{ n^{\alpha+r-m+3}}{(n^\alpha+n^2t)^2}.
\end{align*} 
It is clearly correct that $\epsilon_{y_m}'\gg  \sup_{\zeta\in [t,t+ \frac{1}{n^2}]} n^{-2}\cdot|\epsilon_{y_m}''(\zeta)|$.
We further check 
\begin{align*}
\epsilon_{y_m}'\gg  \sigma_i^\ell n^{\alpha+r-m-1}(nt_{{M}})^{\ell-2}
\end{align*}
in~(4.25) with $t_{{M}}$ in~(3.26), which is true if
$
1\gg \sigma_i^\ell\cdot (\log n)^{-\lambda (\ell-1)}
$
that holds as $\sigma_i=\Theta(\log n)$ in~(3.25) and $\lambda> \frac{\ell}{\ell-1}$ in~(3.27).

As the equation of ${y}_m$ shown in~(3.20), differentiate ${y}_m$ with respect to $t$, 
and it is routine to check that $|{y}_m'|=O(n^{r-m}(1+(nt)^{\ell-2}))=O(n^{r-m+\ell-2}t_M^{\ell-2})$
and $|{y}_m''|=O(n^{ r-m+\ell-2}t_M^{\ell-3})$ such that
\begin{align*}
\epsilon_{y_m}'\gg  \sup_{\zeta\in [t,t+ \frac{1}{n^2}]} n^{-2}\cdot|{y}_m''(\zeta)|.
\end{align*}

Finally, it is natural to obtain
$\mathbb{E}[|\Delta {Y}^{\pm}_{{\boldsymbol{f}_{m}}}(i)|\, |\mathcal{F}_i]=
O\bigl( \frac{|\epsilon_{y_m}'|}{n^2}\bigr)$. 
\end{proof}

\vskip 0.3cm
\noindent\textbf{Claim 4.2.2}\ Assume the event 
 $\mathcal{G}(i)  \cap\mathcal{B}(i)$ holds. Then,
\begin{align*}
|\Delta {Y}_{\boldsymbol{f}_{m}}^{\pm}|=O\bigl(n^{ \alpha'+r-m-1}\bigr)
\end{align*}
 for any $\boldsymbol{f}_{m}\in K_m(i)$ with $2\leqslant m\leqslant r-1$.
 
 \begin{proof}[Proof of Claim 4.2.2]\ 
 Following the equation in~(4.24) and the proof of Claim~4.2.1,
we  have shown
\begin{align*}
|\Delta {Y}_{\boldsymbol{f}_{m}}^{\pm}(i)|&\leqslant |\Delta {Y}_{\boldsymbol{f}_{m}}(i)| +O\Bigl(\max\Bigl\{ \frac{|y_m'|}{n^2}, \frac{|\epsilon_{y_{m}}'|}{n^2}\Bigr\}\Bigr).
\end{align*} 
As the equations of $y_{m}$ in~(3.20) and $\epsilon_{y_{m}}$ in~(3.23),
we have obtained 
\begin{align*}
 \frac{|y_{m}'|}{n^2}=O\bigl(n^{r-m-2}(1+(nt)^{\ell-2})\bigr)\quad\text{and}\quad 
 \frac{|\epsilon_{y_{m}}'|}{n^2}=O\bigl(n^{\alpha+r-m-1}(n^\alpha+n^2t)^{-1}\bigr),
 \end{align*}
 which are negligible compared with $|\Delta {Y}_{\boldsymbol{f}_{m}}|=O(n^{\alpha'+r-m-1})$
 shown in Claim~4.1.7.
 \end{proof}

\subsubsection{$\mathbb{E}\bigl[|\Delta {W}^{\pm}_{\boldsymbol{f},L,k}(i)|| \mathcal{F}_i\bigr]$
and $|\Delta {W}^{\pm}_{\boldsymbol{f},L,k}(i)|$}

By~(2.1) and~(2.2), we have ${W}^{\pm}_{\boldsymbol{f},L,k}(i)=
\pm[|{W}_{\boldsymbol{f},L,k}(i)|-{w}_{L,k}(t)]-\epsilon_{w_{L,k}}(t)$
and 
\begin{align}
\Delta {W}^{\pm}_{\boldsymbol{f},L,k}(i)
&=\pm \biggl[\Delta {W}_{\boldsymbol{f},L,k}(i)- \frac{{w}_{L,k}'}{n^2}\biggr]- \frac{\epsilon_{w_{L,k}}'}{n^2}
+O\biggl(\sup_{\varsigma\in[t, t+ \frac{1}{n^2}]} \frac{|{w}_{L,k}''(\varsigma)|+|\epsilon_{w_{L,k}}''(\varsigma)|}{n^4}\biggr).
\end{align}

\vskip 0.3cm
\noindent\textbf{Claim 4.2.3}\   
For any $t\in [0, t_{{M}}]$,  $\boldsymbol{f}\in {Q}(i)$, $L\in \mathcal{L}$ and $0\leqslant k\leqslant e_L-2$, 
we have $\{{W}^{\pm}_{\boldsymbol{f},L,k}(i)\}_{i\geqslant 0}$ is a supermartingale
and 
\begin{align*}
\mathbb{E}\bigl[|\Delta {W}_{\boldsymbol{f},L,k}^{\pm}(i)|\, |\mathcal{F}_i\bigr]&
=O\Bigl(\sigma_i^{k+1}  n^{\alpha+(r-1)(e_L-k)-r-1}(n^\alpha+n^2t)^{-1}(nt_{{M}})^k\Bigr).
\end{align*}

\begin{proof}[Proof of Claim~4.2.3] 
In order to establish the desired supermartingale condition 
\begin{align*}
\mathbb{E}\bigl[\Delta {W}_{\boldsymbol{f},L,k}^{\pm}\,| \mathcal{F}_i\bigr]\leqslant 0
\end{align*}
by Claim~4.1.5 and the equation shown in~(4.26),   we will show that the error function 
 $\epsilon_{w_{L,k}}$ in~(3.24) satisfies the following inequalities 
\begin{align}
\epsilon_{w_{L,k}}'&\begin{cases} \gg\sigma_i^\ell n^{\alpha+(r-1)e_L-r-1}(nt_{{M}})^{\ell-2},\quad \text{if }k=0; \\>\beta_k\sigma_i^{k+1} n^{\alpha+(r-1)(e_L-k)-r}(nt_{{M}})^{k-1},\quad \text{if }k\geqslant 1.\end{cases}
\end{align}
and   
 \begin{align}
\epsilon_{w_{L,k}}'&\gg 
\sup_{\zeta\in [t,t+ \frac{1}{n^2}]} \frac{|{w}_{L,k}''(\zeta)|+|\epsilon_{w_{L,k}}''(\zeta)|}{n^2},
\end{align}
Differentiating $\epsilon_{w_{L,k}}$ in~(3.24) with respect to $t$, we have
\begin{align*}
\epsilon_{w_{L,k}}'&=(k+2)\beta_k\sigma_i^{k+1} n^{\alpha+(r-1)(e_L-k)-r+1}(n^\alpha+n^2t)^{-1}(nt_{{M}})^k.
\end{align*}
Now we check that the equations shown in~(4.27) and~(4.28) are true.

Firstly, we check 
\begin{align*}
\epsilon_{w_{L,0}}'\gg 
\sigma_i^\ell n^{\alpha+(r-1)e_L-r-1}(nt_{{M}})^{\ell-2}
\end{align*} in~(4.27)
for $k=0$,  which is true if
$2\beta_0\sigma_i\gg \sigma_i^\ell (\log n)^{-\lambda(\ell-1)}$
that holds as $\sigma_i=\Theta(\log n)$ in~(3.25), $t_M$ in~(3.26)
and $\lambda> \frac{\ell}{\ell-1}$ in~(3.27).
For $k\geqslant 1$, the inequality
in~(4.27) is reduced to $(k+2)(n^2t_{{M}})>(n^\alpha+n^2t)$,
which is clearly true as $\alpha\in (0,1)$ and $t\in [0,t_M]$ with $t_M$ in~(3.26).

Taking the derivative of $\epsilon_{w_{L,k}}'$ with respect to $t$ again, we have 
\begin{align*}
\epsilon_{w_{L,k}}''&=O\bigl( \sigma_i^{k+1} n^{\alpha+(r-1)(e_L-k)-r+3}(n^\alpha+n^2t)^{-2}(nt_{\textsc{M}})^k\bigr),
\end{align*}
which implies that 
$\epsilon_{w_{L,k}}'\gg 
\sup_{\zeta\in [t,t+ \frac{1}{n^2}]} n^{-2}\cdot{|\epsilon_{w_{L,k}}''(\zeta)|}$ in~(4.28).
Note that we can not change the power of $\sigma_i$ in the expression of
$\epsilon_{w_{L,k}}$ defined in~(3.24).

At last, we check  
$\epsilon_{w_{L,k}}'\gg 
\sup_{\zeta\in [t,t+ \frac{1}{n^2}]} n^{-2}\cdot{|{w}_{L,k}''(\zeta)|}$.
Differentiating ${w}_{L,k}$ in~(3.21) with respect to $t$, 
as the equations of $p_i$, $\xi_i$ and $\tilde{\xi}_i$ in~(3.5),~(3.9) and~(3.12),
it is checked that
\begin{align*}
{w}_{L,k}'&=\mathbbm{1}_{\{k\geqslant 1\}}\cdot O\bigl(n^{(r-1)(e_L-k)+k-r}t^{k-1}\bigr)+
O\bigl(n^{(r-1)(e_L-k)+k-r}(1+(nt)^{\ell-2})t^{k}\bigr).
\end{align*}
and
\begin{align*}
{w}_{L,k}''&=O\bigl(n^{(r-1)(e_L-k)+\ell+k-r-2}t^{\ell+k-3}\bigr)\\
&\quad+\mathbbm{1}_{\{k\geqslant 1\}}\cdot O\bigl(n^{(r-1)(e_L-k)+k-r}\bigl(1+(nt)^{\ell-2}\bigr)t^{k-1}\bigr)\\
&\quad+\mathbbm{1}_{\{k\geqslant 2\}}\cdot O\bigl(n^{(r-1)(e_L-k)+k-r}t^{k-2}\bigr).
\end{align*} 
Note that all these terms $n^{-2}\cdot {w}_{L,k}''$ for $t\in [0,t_M]$ are negligible compared to $\epsilon_{w_{L,k}}'$ to complete the 
proof the inequality in~(4.28).

Furthermore, it is  natural to obtain 
$\mathbb{E}[|\Delta {W}_{\boldsymbol{f},L,k}^{\pm}(i)|\, |\mathcal{F}_i]=O\bigl( \frac{|\epsilon_{w_{L,k}}'|}{ n^2}\bigr)$.
\end{proof}

 \noindent\textbf{Claim 4.2.4}\ Assume the event 
 $\mathcal{G}(i)\cap\mathcal{B}(i)$ holds. Then,
 \begin{align*}
 |\Delta {W}_{\boldsymbol{f},L,k}^{\pm}(i)|=
 O\bigl(n^{ \alpha'+(r-1)(e_L-k)-r-1}(nt_{{M}})^k\bigr)
 \end{align*}
 for any $\boldsymbol{f}\in Q(i)$, $L\in \mathcal{L}$ and $0\leqslant k\leqslant e_L-2$.

 \begin{proof}[Proof of Claim~4.2.4]\ 
 Following the equation shown in~(4.26) and the proof of Claim~4.2.3, we have shown
 \begin{align}
|\Delta {W}_{\boldsymbol{f},L,k}^{\pm}|&\leqslant |\Delta {W}_{\boldsymbol{f},L,k}(i)| +O\Bigl(\max\Bigl\{\frac{|w_{L,k}'|}{n^2}, \frac{|\epsilon_{w_{L,k}}'|}{n^2}\Bigr\}\Bigr).
\end{align}

As the equations of $w_{L,k}$ shown in~(3.21) and $\epsilon_{w_{L,k}}$ shown in~(3.24),
we have obtained that 
\begin{align*}
 \frac{|{w}_{L,k}'|}{n^2}=\mathbbm{1}_{\{k\geqslant 1\}}\cdot O\bigl(n^{(r-1)(e_L-k)+k-r-2}t^{k-1}\bigr)+
O\bigl(n^{(r-1)(e_L-k)+k-r-2}(1+(nt)^{\ell-2})t^{k}\bigr)
\end{align*}
 and 
\begin{align*}
 \frac{|\epsilon_{w_{L,k}}'|}{n^2}&=O\bigl(\sigma_i^{k+1}  n^{\alpha+(r-1)(e_L-k)-r-1}(n^{\alpha}+n^2t)^{-1}(nt_{{M}})^k\bigr),
\end{align*} 
which are all negligible compared with $|\Delta {W}_{\boldsymbol{f},L,k}(i)|
=O(n^{\alpha'+(r-1)(e_L-k)-r-1}(nt_{{M}})^k)$ shown in Claim~4.1.7.
 \end{proof}

\subsection{Proof of Theorem~3.4}

For $i\geqslant 0$,
define the event  $\mathcal{G}^{+}(i)=\mathcal{G}(i) \cap \mathcal{A}(i) \cap \mathcal{B}(i)$. 
Recall that $|{Y}_{\boldsymbol{f}_m}(0)|= \binom{n-m}{r-m}$ for any $\boldsymbol{f}_m\in {K}_m(0)$
with $2\leqslant m\leqslant r-1$, and $|{W}_{\boldsymbol{f},L,k}(0)|=\mathbbm{1}_{\{k=0\}}\cdot\mathcal|{N}_{\boldsymbol{f},L}|$, 
where $|\mathcal{N}_{\boldsymbol{f},L}|$ is shown in~(3.7) for any $\boldsymbol{f}\in {Q}(0)$, 
$L\in \mathcal{L}$ and $0\leqslant k\leqslant e_L-2$.
Firstly, we show the event $\mathcal{G}(0)$ always holds. 
By the expressions of ${y}_m$ and ${w}_{L,k}$ 
in~(3.20) and~(3.21), and the expressions of $\epsilon_{y_m}$ and $\epsilon_{w_{L,k}}$
in~(3.23) and~(3.24) when $t=0$, we have
\begin{align}
{Y}_{\boldsymbol{f}_m}^{\pm}(0)&= \pm[|{Y}_{\boldsymbol{f}_m}(0)|-{y}_m(0)]-\epsilon_{y_{m}}(0)\notag\\
&< - \frac 12 \sigma_0 n^{\alpha+r-m-1},\\
{W}_{\boldsymbol{f},L,k}^{\pm}(0)&=\pm[|{W}_{\boldsymbol{f},L,k}(0)|-{w}_{L,k}(0)]-\epsilon_{w_{L,k}}(0)\notag\\
&= \mathbbm{1}_{\{k=0\}}\cdot O\bigl(n^{(r-1)e_L-r-1}\bigr)-\epsilon_{w_{L,k}}(0)\notag\\
&<- \frac 12 \beta_k\sigma_0^{k+2}n^{\alpha+(r-1)(e_L-k)-r-1}(nt_{{M}})^k, 
\end{align}
which implies that $|{Y}_{{\boldsymbol{f}}_m}(0)|={y}_m(0)\pm \epsilon_{{y}_m}(0)$ and 
$|{W}_{{\boldsymbol{f}},L,k}(0)|={w}_{L,k}(0)\pm \epsilon_{w_{L,k}}(0)$ hold. 
The equation of $|{Q}(0)|={q}(0)\pm\epsilon_{q}(0)$ shown in (3.15) follows from 
the analysis of the equation shown in~(3.28).

Assume 
the event $\mathcal{G}(i-1)$ holds for  $i\geqslant 1$.
By Claim~4.1.3, Claim~4.1.6 and the law of total probability, 
 $\mathcal{G}^+(i-1)$ holds with probability $1-o(n^{-\mu})$.
Hence, 
\begin{align}
\mathbb{P}[\bar{ \mathcal{G}}({{M}})]&\leqslant 
\mathbb{P}\bigl[ \bar{\mathcal{G}}({i})\cap \mathcal{G}^+({i-1})
\text{ for some }1\leqslant i\leqslant {M}\bigr]+o\bigl(n^{-\mu}\bigr),
\end{align}
where the second term $o(n^{-\mu})$ refers to 
the probability that the event $\mathcal{G}^+({i-1})$
does not hold.

Let the stopping time $\tau_{\boldsymbol{f}_m}$ be the minimum of ${M}$, the last step $i\geqslant 1$
such that $\boldsymbol{f}_m\in {K}_m(i)$, and the maximum $i\geqslant 1$ that
the event $\bar{\mathcal{G}}^{+}({i-1})$ holds. Similarly, let the stopping
time $\tau_{\boldsymbol{f}}$ be the minimum of ${M}$, the last step $i\geqslant 1$ 
such that $\boldsymbol{f}\in {Q}(i)$, and the maximum $i\geqslant 1$ that the event $ \bar{\mathcal{G}}^{+}({i-1})$ holds. 
It follows that
\begin{align}
&\mathbb{P}\Bigl[ \bar{\mathcal{G}}({i})\cap \mathcal{G}^+({i-1})\text{ for some }1\leqslant i\leqslant {M}\Bigr]\notag\\
&\leqslant \sum_{2\leqslant m\leqslant r-1}\sum_{\boldsymbol{f}_m\in {K}_m(i)}\sum_{\flat\in\{+,-\}}\mathbb{P}\Bigl[{Y}_{\boldsymbol{f}_m}^{\flat}
\bigl(i\wedge \tau_{\boldsymbol{f}_m}\bigr)\geqslant 0\text{ for some }i\geqslant 1\Bigr]\notag\\
&\qquad+\sum_{\boldsymbol{f}\in {Q}(i)}\sum_{L\in \mathcal{L}}\sum_{0\leqslant k\leqslant e_L-2}
\sum_{\flat\in\{+,-\}}\mathbb{P}\Bigl[{W}_{\boldsymbol{f},L,k}^{\flat}\bigl(i\wedge \tau_{\boldsymbol{f}}\bigr)\geqslant 0\text{ for some }i\geqslant 1\Bigr].
\end{align}

In order to bound the above probability,  the following tail probability
formula in~\cite{freed75,glock20,boh19,warnke16} is applied to  each inequality in~(4.33).
The following tail probability is due to Freedman~\cite{freed75},
which is called to be Freedman's martingale concentration inequality
and was originally stated for martingales, while the proof for supermartingales is the same.
It is also applied to study
the existence of sparse Steiner triple systems 
to approximately answer a conjecture of Erd\H{o}s in the previous works of Glock
et al.~\cite{glock20} and of Bohman and Warnke~\cite{boh19}.
The simplified version of the Freedman's martingale concentration inequality
here~\cite{boh19,glock20,warnke16} improves the Azuma-Hoeffding inequality when the expected one-step change
is much smaller than the worst case one, that is, 
$\mathbb{E}[|\Delta {S}(i)|\, |\mathcal{F}_i]\ll \max_{i\geqslant 0}|\Delta {S}(i)|$.


\vskip 0.3cm

\noindent\textbf{Lemma 4.3.1} 
Let $\{{S}(i)\}_{i\geqslant 0}$ be a supermartingale with respect to the filtration
$\mathcal{F}=(\mathcal{F}_i)_{i\geqslant 0}$. Let $\Delta {S}(i)={S}(i+1)-{S}(i)$.
Suppose that $\sum_{i\geqslant 0}\mathbb{E}[|\Delta {S}(i)|\, |\mathcal{F}_i]\leqslant C$ and 
$\max_{i\geqslant 0}|\Delta {S}(i)|\leqslant V$.
Then, for any $z>0$,
\begin{align*}
\mathbb{P}\Bigl[{S}(i)\geqslant {S}(0)+z\text{ for some }i\geqslant 1\Bigr]
\leqslant \exp\biggl[- \frac{z^2}{2V(C+z)}\biggr].
\end{align*}

As the equation shown in~(4.30), we have 
\begin{align*}
{Y}_{\boldsymbol{f}_m}^{\flat}(0)
< - \frac 12 \sigma_0 n^{\alpha+r-m-1}
\end{align*} for $\flat\in\{+,-\}$.
By Claim~4.2.1 and Claim~4.2.2, it follows that 
the sequence $\{{Y}_{\boldsymbol{f}_m}^{\flat}(i\wedge \tau_{\boldsymbol{f}_m})\}_{i\geqslant 0}$
is a supermartingale,  
\begin{align*}
\sum_{i\geqslant 0}\mathbb{E}[|\Delta {Y}_{\boldsymbol{f}_m}^{\flat}(i)|\, |\mathcal{F}_i]
&=n^2t\cdot O\bigl(n^{\alpha+r-m-1}(n^\alpha+n^2t)^{-1}\bigr)=O\bigl(n^{\alpha+r-m-1}\bigr)
\end{align*}
and $|\Delta {Y}_{\boldsymbol{f}_{m}}^{\flat}(i)|=O(n^{ \alpha'+r-m-1})$ 
with $\alpha'\in (\alpha_0,\alpha)$.
Applying Lemma~4.3.1 with
\begin{align*}
z=\frac 12 \sigma_0 n^{\alpha+r-m-1},\ 
C=O(n^{\alpha+r-m-1})\text{ and }
V=O(n^{\alpha'+r-m-1})
\end{align*}
 to  ${Y}_{\boldsymbol{f}_{m}}^{\flat}(i)$,
we have
\begin{align}
&\mathbb{P}\bigl[{Y}_{\boldsymbol{f}_m}^{\flat}(i\wedge \tau_{\boldsymbol{f}_m})\geqslant 0\text{ for some }i\geqslant 1\bigr]\notag\\
&\leqslant\mathbb{P}\bigl[{Y}_{\boldsymbol{f}_m}^{\flat}(i\wedge \tau_{\boldsymbol{f}_m})\geqslant {Y}_{\boldsymbol{f}_m}^{\flat}(0)+z\bigr]\notag\\
&\leqslant
\exp\biggl[- \Omega\biggl( \frac{\log^2 n\cdot n^{2\alpha+2r-2m-2}}{n^{\alpha'+r-m-1}\cdot n^{\alpha+r-m-1}}\biggr)\biggr]\notag\\
&= n^{-\omega(1)}.
\end{align}

Similarly, we have 
\begin{align*}
{W}_{\boldsymbol{f},L,k}^{\flat}(0)< 
- \frac 12\beta_k\sigma_0^{k+2}n^{\alpha+(r-1)(e_L-k)-r-1}(nt_{{M}})^k
\end{align*}
by~(4.31)  for $\flat\in\{+,-\}$.
By Claim~4.2.3 and Claim~4.2.4, the sequence $\{{W}_{\boldsymbol{f},L,k}^{\flat}(i\wedge \tau_{\boldsymbol{f}})\}_{i\geqslant 0}$
is also a supermartingale,
\begin{align*}
\sum_{i\geqslant 0}\mathbb{E}[|\Delta {W}_{\boldsymbol{f},L,k}^{\flat}(i)|\, |\mathcal{F}_i]
&=n^2t\cdot O\bigl(\sigma_i^{k+1}  n^{\alpha+(r-1)(e_L-k)-r-1}(n^\alpha+n^2t)^{-1}(nt_{{M}})^k\bigr)\\
&=O\bigl(\sigma_i^{k+1}n^{\alpha+(r-1)(e_L-k)-r-1}(nt_{{M}})^k\bigr)
\end{align*} 
 and $|\Delta {W}_{\boldsymbol{f},L,k}^{\pm}(i)|=
 O(n^{ \alpha'+(r-1)(e_L-k)-r-1}(nt_{{M}})^k)$
 with $\alpha'\in(\alpha_0,\alpha)$.
Applying Lemma~4.3.1 with
\begin{align*}
z&= \frac 12\beta_k\sigma_0^{k+2}n^{\alpha+(r-1)(e_L-k)-r-1}(nt_{{M}})^k,\\
C&=O\bigl(\sigma_i^{k+1}n^{\alpha+(r-1)(e_L-k)-r-1}(nt_{{M}})^k\bigr),\\
V&=O\bigl(n^{\alpha'+(r-1)(e_L-k)-r-1}(nt_{{M}})^k\bigr) 
\end{align*}
to ${W}_{\boldsymbol{f},L,k}^{\flat}(i)$, it follows that
\begin{align}
&\mathbb{P}\Bigl[{W}_{\boldsymbol{f},L,k}^{\flat}(i\wedge \tau_{\textbf{f}})\geqslant 0\text{ for some }i\geqslant 0\Bigr]\notag\\
&\leqslant\mathbb{P}\bigl[{W}_{\boldsymbol{f},L,k}^{\flat}(i\wedge \tau_{\textbf{f}})\geqslant
 {W}_{\boldsymbol{f},L,k}^{\flat}(0)+z\bigr]\notag\\
&\leqslant\exp\biggl[- \Omega\biggl(\frac{ (\log n)^{k+2}\cdot n^{2\alpha+2(r-1)(e_L-k)-2r-2}(nt_{{M}})^{2k}}{n^{\alpha'+(r-1)(e_L-k)-r-1}(nt_{{M}})^k\cdot n^{\alpha+(r-1)(e_L-k)-r-1}(nt_{{M}})^k}\biggr)\biggr]\notag\\
&= n^{-\omega(1)}.
\end{align}

Taking a union bound of the probabilities shown in~(4.34) for any 
$\boldsymbol{f}_m\in K_m(i)$ with $2\leqslant m\leqslant r-1$, and the ones in~(4.35)
for any $\boldsymbol{f}\in {Q}(i)$, $L\in \mathcal{L}$ and $0\leqslant k\leqslant e_L-2$, 
combining the equation shown in~(4.33), $|\mathcal{L}|=O(1)$, $|K_m(i)|< n^m$
and $|{Q}(i)|<n^r$, we have $\mathbb{P}\bigl[ \bar{\mathcal{G}}({i})\cap \mathcal{G}^+({i-1})
\text{ for some }1\leqslant i\leqslant {M}\bigr]=o(n^{-\mu})$.
Finally, as the equations shown in~(4.32), we complete the proof of 
Theorem~3.4 by $\mathbb{P}[ \bar{\mathcal{G}}({{M}})]=o(n^{-\mu})$.

\section{Claim~4.1.3 and Claim~4.1.6}

In this section, 
assume that the event $\mathcal{G}(i)$ holds 
for an integer $i$  with $0\leqslant i< {M}$, 
and we prove Claim~4.1.3 and Claim~4.1.6. 

\subsection{ A special  injective function}

Given an $r$-graph $H$ with 
$\max \{v_H,e_H\}\leqslant 2r\ell$, let $\hat{H}\subsetneq H$ be a subhypergraph, and 
$\theta$ be a fixed  injection from $V(\hat{H})$ to the vertex-set $[n]$ of $\mathbb{H}(i)$.

An injection $\psi$  maps an $r$-set $\boldsymbol{f}=\{x_1,\cdots,x_r\}$
as $\psi(\boldsymbol{f})=\psi(x_1)\ldots\psi(x_r)$ and maps a collection $U$ of $r$-sets as 
 $\psi(U)=\cup_{\boldsymbol{f}\in U}\psi(\boldsymbol{f})$.
 Define a set of injections $\psi$ extended from $\theta$, denoted as 
$\Psi_{\theta,\hat{H},H}(i)$, to be
\begin{align}
\Psi_{\theta,\hat{H},H}(i)=\Bigl\{\psi:\, V(H)\rightarrow [n]\text{ with }
\psi|_{V(\hat{H})}\equiv\theta\text{ and }\psi(H\backslash \hat{H})\subseteq \mathbb{H}(i)\Bigr\},
\end{align}which implies that the value $|\Psi_{\theta,\hat{H},H}(i)|$ counts copies of $H$
in $\mathbb{H}(i)\cup \theta(\hat{H})$ and these copies all have the fixed image $\theta(\hat{H})$
of $\hat{H}$ under the fixed injection $\theta$ as a subhypergraph.

 \begin{lemma}
With the above notation, let $\mathcal{C}(i)$ denote the event that, for any $\epsilon>0$, an
$r$-graph pair $(\hat{H}, H)$ and  an
injection $\theta: V(\hat{H})\rightarrow [n]$, the set $\Psi_{\theta,\hat{H},H}(i)$ 
defined in~(5.1) satisfies
\begin{align*}
\bigl|\Psi_{\theta,\hat{H},H}(i)\bigr|\leqslant n^{\epsilon}\cdot \max_{\hat{H}\subseteq \widetilde{H}\subseteq H}
n^{[v_H-(r-1)e_{H}]+[(r-1)e_{\widetilde{H}}-v_{\widetilde{H}}]}\cdot(nt)^{e_H-e_{\widetilde{H}}}.
\end{align*}  
On the condition of  the event $\mathcal{G}(i)$, 
the event $\mathcal{C}(i)$ holds with probability $1-o(n^{-\mu})$, 
that is,
\begin{align*}
\mathbb{P}\bigl[ \bar{\mathcal{C}}(i)\,|\mathcal{G}(i)\bigr]=o(n^{-\mu}).
\end{align*}
\end{lemma}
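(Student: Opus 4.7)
The plan is to bound $|\Psi_{\theta, \hat H, H}(i)|$ by an extension-counting argument that greedily processes the edges of $E(H)\setminus E(\hat H)$ one at a time, exploiting the linearity of $\mathbb{H}(i)$ together with an auxiliary max-degree bound. Since $\max\{v_H, e_H\} \leq 2r\ell = O(1)$, there are only finitely many isomorphism types of pairs $(\hat H, H)$; for each fixed type the number of injections $\theta$ is at most $n^{v_{\hat H}}$, which is polynomial in $n$. Thus, after establishing the desired bound for one fixed triple $(\hat H, H, \theta)$ with probability $1 - n^{-\omega(1)}$, a union bound over the $n^{O(1)}$ choices yields $\mathbb{P}[\bar{\mathcal{C}}(i) \mid \mathcal{G}(i)] = o(n^{-\mu})$.

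For fixed $(\hat H, H, \theta)$, I would fix an ordering $e_1, \ldots, e_s$ of the new edges and write $|\Psi_{\theta, \hat H, H}(i)| \leq \prod_{j=1}^s A_j$, where $A_j$ counts the number of ways to extend a partial embedding of $\hat H_{j-1} := \hat H + \{e_1, \ldots, e_{j-1}\}$ to $\hat H_j$. With $m_j := |V(e_j) \cap V(\hat H_{j-1})|$, the linearity of $\mathbb{H}(i)$ supplies three regimes: $A_j = O(1)$ when $m_j \geq 2$ (the already-embedded $m_j$-set lies in at most one edge of $\mathbb{H}(i)$); $A_j = O(\Delta(\mathbb{H}(i)))$ when $m_j = 1$; and $A_j = O(|\mathbb{H}(i)|) = O(n^2 t)$ when $m_j = 0$. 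Taking $\widetilde H := \hat H \cup \{e_j : m_j \geq 2\}$, a short calculation using the identity $\sum_j (r - m_j) = v_H - v_{\widetilde H}$ shows that $\prod_j A_j$ matches, up to an $n^{O(\epsilon)}$ factor, exactly one term $n^{[v_H - (r-1)e_H] + [(r-1)e_{\widetilde H} - v_{\widetilde H}]}(nt)^{e_H - e_{\widetilde H}}$ of the maximum in the lemma; since this particular $\widetilde H$ is a valid choice in the max, the bound follows.

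The main obstacle is the auxiliary max-degree bound $\Delta(\mathbb{H}(i)) \leq (nt) \cdot n^\epsilon$ with probability $1 - o(n^{-\mu - 1})$, since vertex degrees in $\mathbb{H}(i)$ are not tracked among the variables of Theorem~3.4 (those concern codegrees of sets of size at least two in $\mathbb{G}(i)$). To handle this I would introduce the auxiliary tracked variable $D_v(i) := |\{\boldsymbol{e} \in \mathbb{H}(i) : v \in \boldsymbol{e}\}|$ for each $v \in [n]$, whose one-step expected change is $|Q_v(i)|/|Q(i)| \sim r/n$ and whose pseudo-random trajectory is $d(t) \sim rnt$. Applying the same martingale concentration machinery as in Section~4 (supermartingale auxiliary variables and Freedman's inequality, with variance controlled by the trivial per-step change of $1$) yields concentration of $D_v(i)$ around $rnt$ with multiplicative error $n^\epsilon$ for each vertex, conditional on $\mathcal{G}(i)$. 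A union bound over $v \in [n]$ closes this step, and combining with the extension count above completes the argument.
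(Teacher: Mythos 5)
Your reduction to a single triple $(\hat H,H,\theta)$ plus a union bound is fine, and your auxiliary max-degree bound is plausible (it is in fact the special case of the lemma with $\hat H$ a single vertex and $H$ a single edge). But the core counting step has a genuine gap: the greedy edge-by-edge bound $\prod_j A_j$ proves a \emph{strictly weaker} statement than the lemma. Your three regimes give, up to $n^{O(\epsilon)}$, the bound $\prod_j \max\{n^{2-m_j}t,\,1\}$, whereas the lemma's bound is (along your filtration $\hat H=H_0\subset\cdots\subset H_s=H$) the maximum of the partial products $\prod_{j>k} n^{2-m_j}t$ over $k$. These coincide only when no step with $m_j\geqslant 2$ is present; at such a step the ``true'' per-edge factor is $n^{2-m_j}t\ll 1$, which your argument replaces by $O(1)$. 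Concretely, take $\hat H=\emptyset$ and $H=C_\ell^r$, processed in cyclic order: your bound is $(n^2t)(n^{1+\epsilon}t)^{\ell-2}\cdot O(1)=n^{O(\epsilon)}\,n\,(nt)^{\ell-1}$, while the right-hand side of the lemma is $n^{\epsilon}(nt)^{\ell}$ (the maximum is attained at $\widetilde H=\emptyset$). At $t=t_M$ these differ by the polynomial factor $n\,(nt_M)^{-1}\approx n^{(\ell-2)/(\ell-1)}$, so your argument does not establish the stated inequality. The discrepancy is unavoidable for a deterministic worst-case extension count: a linear $r$-graph with $i$ edges and maximum degree $O(n^{\epsilon}nt)$ \emph{can} contain $\Theta(n(nt)^{\ell-1})$ copies of $C_\ell^r$; only the randomness of the process keeps the count near its mean $(nt)^\ell$. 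Relatedly, your identity $\sum_j(r-m_j)=v_H-v_{\widetilde H}$ is wrong (the left side equals $v_H-v_{\hat H}$), and the matching with $\widetilde H=\hat H\cup\{e_j: m_j\geqslant 2\}$ fails for the same reason.

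The paper's proof is probabilistic rather than enumerative: it first shows $\mathbb{P}[U\subseteq\mathbb{H}(i)\,|\,\mathcal{G}(i)]=O((n^{2-r}t)^{|U|})$ for any fixed collection $U$ of $r$-sets, then bounds the $s$-th moment $\mathbb{E}[|\Psi_{\theta,\hat H,H}(i)|^s\,|\,\mathcal{G}(i)]$ by classifying $s$-tuples of embeddings according to their overlap patterns $\widetilde H_j$, obtaining exactly $(\max_{\widetilde H}\,n^{v_H-v_{\widetilde H}}(n^{2-r}t)^{e_H-e_{\widetilde H}})^s$, and finally applies Markov's inequality with $s=\lceil(2+2r\ell+\mu)/\epsilon\rceil$; the $n^{\epsilon}$ slack in the statement is precisely what makes this Markov step yield probability $O(n^{-(2+2r\ell+\mu)})$, surviving the union bound over $(i,\hat H,H,\theta)$. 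It is this first-moment-per-overlap-class accounting that correctly charges a ``cycle-closing'' edge with the factor $n^{2-m_j}t\ll 1$ instead of $O(1)$. To repair your approach you would need to replace the deterministic count of extensions at steps with $m_j\geqslant 2$ by a probabilistic one, which essentially forces you back to a moment or martingale argument for $|\Psi_{\theta,\hat H,H}(i)|$ itself; the degree-tracking sub-lemma, while correct, does not address this.
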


\begin{proof}[Proof of Lemma~5.1] For any $i$ with $0\leqslant i< {M}$, suppose 
that the event $\mathcal{G}({i})$  holds.
For any collection ${U}\subseteq \binom{[n]}{r}$ of $r$-sets, 
we have 
\begin{align*}
\mathbb{P}\bigl[{U}\subseteq \mathbb{H}(i)|\mathcal{G}(i)\bigr]
=O\Bigl( i^{|{U}|}\cdot \Bigl( \frac{1}{|{Q}(i)|}\Bigr)^{|{U}|}\Bigr)
\end{align*}
 because there are 
at most $i^{|{U}|}$ ways that all $r$-sets in ${U}$ appear in $\mathbb{H}(i)$,
and each $r$-set is added to $\mathbb{H}(i)$ at some step $j\leqslant i$ with probability at most $1/|{Q}(i)|$.
Since $p_i$ in~(3.5) and $\xi_i$ in~(3.9) are $\Theta(1)$ when $t\in [0,t_{{M}}]$ as shown in
Remark~3.5,
we have $|{Q}(i)|=\Theta(n^r)$ as the equation shown in~(3.19),
and it follows that
\begin{align}
\mathbb{P}\bigl[{U}\subseteq \mathbb{H}(i)\,|\mathcal{G}(i)\bigr]=O\Bigl(\bigl(n^{2-r}t\bigr)^{|{U}|}\Bigr).
\end{align}

Hence, on the definition of  $\Psi_{\theta,\hat{H},H}(i)$ in~(5.1),
for any fixed
$r$-graph pair $(\hat{H}, H)$,  an
injection $\theta: V(\hat{H})\rightarrow [n]$, 
and $s\geqslant 1$ an integer that will be defined later,
we have
\begin{align}
\mathbb{E}\Bigl[\Bigl|\bigl(\Psi_{\theta,\hat{H},H}(i)\bigr)^s\,\Bigr|\Big|\mathcal{G}(i)\Bigr]
&=\sum_{(\psi_1,\cdots,\psi_s)\in (\Psi_{\theta,\hat{H},H}(i))^s}\mathbb{E}\Bigl[
\prod_{j\in [s]}\Bigl(\mathbbm{1}_{\{\psi_j(H\backslash \hat{H})\subseteq \mathbb{H}(i)\}}\Bigr)\,\Big|\mathcal{G}(i)\Bigr],
\end{align}
where $(\Psi_{\theta,\hat{H},H}(i))^s$ is the $s$-th power set of $\Psi_{\theta,\hat{H},H}(i)$,
the sum is over all $s$-tuples $(\psi_1,\cdots,\psi_s)$ with the injection 
$\psi_j: V({H})\rightarrow [n]$ satisfying $\psi_j|_{V(\hat{H})}\equiv \theta$
 for $j\in [s]$.

 Let ${E}_j=\psi_j(H\backslash \hat{H})$ with $j\in [s]$. As the equation shown in~(5.2),
\begin{align*}
\mathbb{E}\Bigl[\mathbbm{1}_{\{\psi_j(H\backslash \hat{H})\subseteq \mathbb{H}(i)\}}\,\Big|\mathcal{G}(i)\Bigr]=O\Bigl(\bigl(n^{2-r}t\bigr)^{|{E}_j|}\Bigr).
\end{align*}\
 We rearrange the equation in~(5.3) to 
 \begin{align}
\mathbb{E}\Bigl[\Bigl|\bigl(\Psi_{\theta,\hat{H},H}(i)\bigr)^s\,\Bigr|\Big|\mathcal{G}(i)\Bigr]
&= O\Bigl(\sum_{(\psi_1,\cdots,\psi_s)\in (\Psi_{\theta,\hat{H},H}(i))^s}\bigl(n^{2-r}t\bigr)^{\bigl|\cup_{j\in [s]}|{E}_j|\bigr|}\Bigr).
\end{align}

Conversely, for any $s$-tuple of injections $(\psi_1,\cdots,\psi_s)\in (\Psi_{\theta,\hat{H},H}(i))^s$,
there must exist 
$\widetilde{H}_j$  with
$\hat{H}\subseteq \widetilde{H}_j\subseteq H$ satisfying
\begin{align}
{E}_j\cap \bigl(\cup_{k\in [j-1]}{E}_k\bigr)\cong \widetilde{H}_j\backslash \hat{H}\quad\text{and}\quad 
{E}_j\backslash \cup_{k\in [j-1]}{E}_k\cong H\backslash \widetilde{H}_j,
\end{align}
where $j\in [s]$.  By double-counting arguments,
for any $s$-tuples of graphs $(\widetilde{H}_1,\ldots,\widetilde{H}_s)$ 
with $\widetilde{H}_j$ satisfying $\hat{H}\subseteq \widetilde{H}_j\subseteq H$, 
we have at most $\prod_{j\in [s]}n^{v_H-v_{\widetilde{H}_j}}$ ways to generate 
one $(\psi_1,\cdots,\psi_s)\in (\Psi_{\theta,\hat{H},H}(i))^s$. 

Therefore, for any $s$-tuples of graphs $(\widetilde{H}_1,\ldots,\widetilde{H}_s)$ 
with $\widetilde{H}_j$ satisfying $\hat{H}\subseteq \widetilde{H}_j\subseteq H$,
we rewrite the equation in~(5.4) to 
\begin{align*}
\mathbb{E}\Bigl[\Bigl|\bigl(\Psi_{\theta,\hat{H},H}(i)\bigr)^s\Bigr|\Big|\mathcal{G}(i)\Bigr]&
=O\Bigl( \sum_{(\widetilde{H}_1,\ldots,\widetilde{H}_s)}\prod_{j\in [s]}n^{v_H-v_{\widetilde{H}_j}}\bigl(n^{2-r}t\bigr)^{e_H-e_{\widetilde{H}_j}}\Bigr)\\
&= O\biggl(\Bigl(\max_{\hat{H}\subseteq \widetilde{H}\subseteq H}n^{v_H-v_{\widetilde{H}}}\bigl(n^{2-r}t\bigr)^{e_H-e_{\widetilde{H}}}\Bigr)^s\biggr)\\
&= O\biggl(\Bigl(\max_{\hat{H}\subseteq \widetilde{H}\subseteq H}n^{[v_H-(r-1)e_H]+[(r-1)e_{\widetilde{H}}-v_{\widetilde{H}}]}\bigl(nt\bigr)^{e_H-e_{\widetilde{H}}}\Bigr)^s\biggr),
\end{align*}
where the second equality is correct because the ways to choose 
$(\widetilde{H}_1,\ldots,\widetilde{H}_s)$ is finite for the fixed
$r$-graph pair $(\hat{H}, H)$.
For any $\epsilon>0$, choosing  $s=\bigl\lceil(2+2r\ell+\mu)/\epsilon\bigr\rceil$, by Markov's inequality
of conditional probability, we have
 \begin{align*}
 &\mathbb{P}\biggl[\Bigl|\Psi_{\theta,\hat{H},H}(i)\Bigr|> 
 n^\epsilon\cdot\max_{\hat{H}\subseteq \widetilde{H}\subseteq H}n^{[v_H-(r-1)e_H]+[(r-1)e_{\widetilde{H}}-v_{\widetilde{H}}]}\cdot\bigl(nt\bigr)^{e_H-e_{\widetilde{H}}}\,\Big|\mathcal{G}(i)\biggr]\\
 &\leqslant\mathbb{P}\biggl[\Bigl|\bigl(\Psi_{\theta,\hat{H},H}(i)\bigr)^s\Bigr|> 
 n^{\epsilon s}\cdot\Bigl(\max_{\hat{H}\subseteq \widetilde{H}\subseteq H}n^{[v_H-(r-1)e_H]+[(r-1)e_{\widetilde{H}}-v_{\widetilde{H}}]}\cdot\bigl(nt\bigr)^{e_H-e_{\widetilde{H}}}\Bigr)^s\,\Big|\mathcal{G}(i)\biggr]\\
& =O\bigl( n^{-\epsilon s}\bigr)\\
&=O\bigl( n^{-(2+2r\ell+\mu)}\bigr).
 \end{align*}
Finally, by taking a  union bound  for all possible $(i, \hat{H}, H, \theta)$ of
the above inequality,
which has at most $o(n^{2+2r\ell})$ ways because there are $n^2t_M=o(n^{1+ \frac{1}{\ell-1}})$
ways to choose $i$ with $t_M$ in~(3.26), 
 and $O(n^{2r\ell})$ ways to choose $(\hat{H}, H, \theta)$
as $\max\{v_H,e_H\}\leqslant 2r\ell$, we have 
\begin{align*}
\mathbb{P}[\bar{\mathcal{C}}(i)\,|\mathcal{G}(i)]=o(n^{-\mu})
\end{align*}
to complete the proof of Lemma~5.1. 
\end{proof}

\begin{remark}
We make no efforts to 
characterize the optimal upper bound of $\max \{v_H,e_H\}$ in Section~5.1. 
Indeed, following the same discussions,
$\max \{v_H,e_H\}$ could be 
any  constant, while the value $2r\ell$ is enough
to satisfy the number of vertices in the structures counted by the overcounting and boundedness parameters 
in Claim~4.1.3 and Claim~4.1.6. 
\end{remark}

\subsection{View the parameters in a different way}
By Lemma~5.1, for $1\leqslant i<M$, assuming the event $\mathcal{G}(i)\cap\mathcal{C}(i)$ holds,
we introduce some notation to recharacterize the overcounting and boundedness parameters  in Claim~4.1.3 and Claim~4.1.6.

\begin{definition}
For any given $L\in \mathcal{L}$, $0\leqslant k\leqslant e_L-2$, 
a collection of nonempty available $r$-sets ${U}\subseteq {Q}(i)$
and $\boldsymbol{f}_m=\{x_1,\cdots,x_m\}\subseteq \boldsymbol{f}\in {Q}(i)\backslash {U}$ 
such that 
\begin{align*}
|{U}|+\mathbbm{1}_{\{m\geqslant 1\}}\leqslant e_L-k.
\end{align*}
In particular, $\boldsymbol{f}_m=\boldsymbol{f}=\emptyset$ when $m=0$.
Let $\Gamma_{{U},\boldsymbol{f}_m,L,k}(i)$ be defined as 
\begin{align}
\Gamma_{{U},\boldsymbol{f}_m,L,k}(i)=\Bigl\{L'\in \mathcal{F}_L:\, 
{U}\subseteq L'\cap {Q}(i),\ \boldsymbol{f}_m\subseteq V(L')
\text{ and }\bigl|L'\cap \mathbb{H}(i)\bigr|\geqslant k\Bigr\}.
\end{align}
\end{definition}

\begin{lemma}
Assume that  the event $\mathcal{G}(i)\cap \mathcal{C}(i)$ holds. 
With the above notation, it follows that 
\begin{align*}
\bigl|\Gamma_{{U},\boldsymbol{f}_m,L,k}(i)\bigr|=O\biggl(n^{\alpha'+(r-1)\cdot
\bigl(e_L-k-|{U}|-\mathbbm{1}_{\{|{U}|+\mathbbm{1}_{\{m\geqslant 1\}}< e_L-k\}}\bigr)
-\mathbbm{1}_{\{m\geqslant 1\}}\cdot m}\biggr),
\end{align*}where $\alpha'\in (\alpha_0,\alpha)$ and $\alpha_0= \frac{\ell-2}{\ell-1}$. 
\end{lemma}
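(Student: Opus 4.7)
My plan is to reduce the bound on $|\Gamma_{U,\boldsymbol{f}_m,L,k}(i)|$ to a finite number of applications of Lemma~5.1. First, I would enumerate the $O(1)$ position-assignments embedding the abstract data $(U,\boldsymbol{f}_m)$ inside a fixed abstract copy of $L$: each $r$-set in $U$ is sent to one of the $e_L$ edges of $L$, and each vertex of $\boldsymbol{f}_m$ to one of the $v_L$ vertices, consistently with the concrete $r$-set structure in $[n]$. For each such assignment, let $\hat{H}_0\subseteq L$ be the resulting subhypergraph with edge set $U$ and vertex set $V(\text{edges of }U)\cup\boldsymbol{f}_m$, and let $\theta:V(\hat{H}_0)\to[n]$ be the corresponding injection.

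To encode the constraint $|L'\cap\mathbb{H}(i)|\geqslant k$, I would sum over all $k$-subsets $S\subseteq E(L)\setminus E(\hat{H}_0)$, with $S$ playing the role of the ``extra'' edges forced into $\mathbb{H}(i)$, and set $H_S=\hat{H}_0\cup S$. Each $L'\in\Gamma_{U,\boldsymbol{f}_m,L,k}(i)$ is then counted by at least one term of
\[
|\Gamma_{U,\boldsymbol{f}_m,L,k}(i)|\leqslant O(1)\sum_S \bigl|\Psi_{\theta,\hat{H}_0,H_S}(i)\bigr|\cdot n^{v_L-v_{H_S}},
\]
where $n^{v_L-v_{H_S}}$ accounts for extending an embedding of $V(H_S)$ to $V(L)$. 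Substituting the bound from Lemma~5.1, using $v_L=(r-1)e_L$ from Remark~3.2 and $e_{H_S}=|U|+k$, this simplifies to
\[
|\Gamma_{U,\boldsymbol{f}_m,L,k}(i)|\leqslant n^\epsilon\max_{\hat{H}_0\subseteq\widetilde{H}\subseteq H_S} n^{(r-1)(e_L-k-|U|)+[(r-1)e_{\widetilde{H}}-v_{\widetilde{H}}]}(nt)^{|U|+k-e_{\widetilde{H}}}.
\]

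The last step is a case analysis of the optimizing $\widetilde{H}$, exploiting $nt\leqslant nt_{M}=n^{1/(\ell-1)-\lambda\log\log n/\log n}<n^{1/(\ell-1)}$. If $|U|+\mathbbm{1}_{\{m\geqslant 1\}}=e_L-k$ (so the indicator $\delta$ in the target bound equals $0$), the choice $\widetilde{H}=L$ is admissible, gives $(r-1)e_L-v_L=0$ and $(nt)^0=1$, and matches the claimed $n^{\alpha'}$ after absorbing $n^\epsilon$. Otherwise ($\delta=1$), every admissible $\widetilde{H}$ is a proper subhypergraph of $L$, so by Remark~3.2 we have $(r-1)e_{\widetilde{H}}-v_{\widetilde{H}}\leqslant -c(\widetilde{H})-m_0(\widetilde{H})$, where $c(\widetilde{H})\geqslant 1$ counts the path-components of $\widetilde{H}$ inside the linear cycle $L$ and $m_0(\widetilde{H})$ counts vertices of $\boldsymbol{f}_m$ isolated from the edges of $\widetilde{H}$. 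Combining this with the constraint $\boldsymbol{f}\in Q(i)\setminus U$ and the linearity of $\mathbb{G}(i)$ supplies the required $(r-1)+m\cdot\mathbbm{1}_{\{m\geqslant 1\}}$ saving.

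The hard part will be the uniformity of this case analysis as $e_{\widetilde{H}}$ varies over $\{|U|,\ldots,|U|+k\}$: one must verify that the trade-off between the structural saving $n^{-c(\widetilde{H})-m_0(\widetilde{H})}$ and the $(nt_M)^{|U|+k-e_{\widetilde{H}}}$ inflation uniformly delivers at least $n^{-(r-1)-m\cdot\mathbbm{1}_{\{m\geqslant 1\}}}$. The $-m$ term is the subtle one: although $\boldsymbol{f}_m\subseteq\boldsymbol{f}\in Q(i)\setminus U$ permits partial overlap with $V(\text{edges of }U)$, the linearity bound $|\boldsymbol{u}\cap\boldsymbol{f}|\leqslant 1$ for every $\boldsymbol{u}\in U$ controls this overlap, and a careful bookkeeping of the extremal configurations shows the dominant contribution comes from $m_0\geqslant m$. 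Choosing $\epsilon<\alpha'-\alpha_0$ absorbs the $n^\epsilon$ slack from Lemma~5.1, and a union bound over the $O(1)$ position-assignments and $k$-subsets $S$ finishes the proof.
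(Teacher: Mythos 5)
Your reduction to Lemma~5.1 takes a route that is genuinely different from the paper's, and the difference matters for the bookkeeping. The paper chooses $H\subset L$ with $V(H)=V(L)$ and $e_H=k$ (the $k$ forced edges), and makes $\hat{H}$ \emph{edgeless} on $V(U')\cup\boldsymbol{f}'_m$; this forces $v_H-(r-1)e_H=(r-1)(e_L-k)$ exactly, so Lemma~5.1 already produces the leading factor of the target, and no separate vertex-extension step is needed. You instead embed only $V(H_S)$ and then multiply by $n^{v_L-v_{H_S}}$. After the algebra (using $v_{H_S}-(r-1)e_{H_S}=c(H_S)$, the number of arcs) the two reductions demand equivalent structural inequalities, so this part is fine, but the paper's parametrization is the cleaner one because it keeps the ``$U$-saving'' out of the $\widetilde{H}$-term.

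The real gap is in your final case analysis. First, the assertion that ``$\widetilde{H}=L$ is admissible'' when $|U|+\mathbbm{1}_{\{m\geqslant 1\}}=e_L-k$ is only correct when $m=0$. If $m\geqslant 1$ and $|U|+1=e_L-k$, then $e_{H_S}=|U|+k=e_L-1$, so every admissible $\widetilde{H}$ is a proper sub-hypergraph of $L$, your ``$(r-1)e_L-v_L=0$'' step does not apply, and you must instead run the $\widetilde{H}=H_S$ computation (with the $-m$ to supply) and verify that it closes. Second, and more seriously, in the $\delta=1$ case your proposed estimate $(r-1)e_{\widetilde{H}}-v_{\widetilde{H}}\leqslant -c(\widetilde{H})-m_0(\widetilde{H})$ with $c\geqslant 1$ gives a saving of at most $n^{-1-m_0}$, whereas the claimed exponent requires an additional drop of $(r-1)\delta+m\,\mathbbm{1}_{\{m\geqslant 1\}}$, which is at least $2$ for $r\geqslant 3$. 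A single path-component gives you only $1$; you have not shown where the remaining $(r-2)$ orders of magnitude come from, nor controlled the case $m_0<m$ when $\boldsymbol{f}_m$ meets $V(U)$. This is exactly the content of the paper's vertex-count inequality $e_{\widetilde{H}}\leqslant\frac{v_{\widetilde{H}}+\mathbbm{1}_{\{m\geqslant 1\}}(r-m-1)}{r-1}-|U|-\mathbbm{1}_{\{m\geqslant 1\}}-\delta$, and you have flagged it as ``the hard part'' rather than proved it; the phrase ``careful bookkeeping $\ldots$ shows the dominant contribution comes from $m_0\geqslant m$'' is an assertion in place of an argument. The lemma's entire substance is in these structural inequalities on the linear cycle; the reduction to Lemma~5.1 is routine and is not enough on its own.
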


The proof of Lemma~5.4 is left to Section~5.3. Firstly, we use it 
to prove the equations shown in~(4.21) and~(4.22) in Claim~4.1.6.

\begin{proof}[Proof of (4.21) in Claim~4.1.6.]\ For any $\boldsymbol{f}_{m}\in {K}_m(i)$ with $2\leqslant m\leqslant r-1$
and $\boldsymbol{g}\in {Q}(i)$, if $|\boldsymbol{f}_{m}\cap\boldsymbol{g}|\leqslant 1$,
by the set $\Pi_{\boldsymbol{f}_m,\boldsymbol{g}}(i)$ defined in Section~4.1.4 
and  $\Gamma_{{U},\boldsymbol{f}_m,L,k}(i)$ defined in~(5.6), we have
\begin{align*}
|\Pi_{\boldsymbol{f}_m,\boldsymbol{g}}(i)|= \sum_{L\in \mathcal{L}}|\Gamma_{\{\boldsymbol{g}\},\boldsymbol{f}_m,L,e_L-2}(i)|.
\end{align*}
For any $L\in \mathcal{L}$, substituting ${U}=\{\boldsymbol{g}\}$, $2\leqslant m\leqslant r-1$
and $k=e_L-2$ in Lemma~5.4,
we have 
\begin{align*}
\mathbbm{1}_{\{|{U}|+\mathbbm{1}_{\{m\geqslant 1\}}< e_L-k\}}=0,
\end{align*}
and then $|\Gamma_{\{\boldsymbol{g}\},\boldsymbol{f}_m,L,e_L-2}(i)|=O(n^{\alpha'+r-m-1})$.
Since $|\mathcal{L}|=O(1)$, we have the equation in~(4.21) is true.
\end{proof}

\begin{proof}[Proof of (4.22) in Claim~4.1.6]\ For any  $\boldsymbol{f}$, $\boldsymbol{g}\in {Q}(i)$, 
$L\in \mathcal{L}$ and $0\leqslant k\leqslant e_L-2$, if $\boldsymbol{f}\neq\boldsymbol{g}$,
by the set $\Pi_{\boldsymbol{f},L,k,\boldsymbol{g}}(i)$ defined in Subsection~4.1.4
and $\Gamma_{{U},\boldsymbol{f}_m,L,k}(i)$ defined in~(5.6),
we have 
\begin{align*}
|\Pi_{\boldsymbol{f},L,k,\boldsymbol{g}}(i)|=|\Gamma_{\{\boldsymbol{f},\boldsymbol{g}\},\emptyset,L,k}(i)|.
\end{align*}
Substituting ${U}=\{\boldsymbol{f},\boldsymbol{g}\}$ and $m=0$ in Lemma~5.4,
it follows that
\begin{align*}
\mathbbm{1}_{\{|{U}|+\mathbbm{1}_{\{m\geqslant 1\}}< e_L-k\}}=\mathbbm{1}_{\{k< e_L-2\}},
\end{align*}
and then
\begin{align*}
|\Gamma_{\{\boldsymbol{f},\boldsymbol{g}\},\emptyset,L,k}(i)|=O\bigl(n^{\alpha'+(r-1)\cdot(e_L-k-2-\mathbbm{1}_{\{k< e_L-2\}})}\bigr)
\end{align*}
to show the equation~(4.22) is true.
\end{proof}

With the help of $\Gamma_{{U},\boldsymbol{f}_m,L,k}(i)$ defined in~(5.6), 
we further define some sets  to collect overlapping copies of
the pair $L$ and $K$ that are extended from certain $r$-sets in $Q(i)$
for any $(L,K)\in \mathcal{L}^2$, 
where $\mathcal{L}^2=\{(L,K)\,| L,K\in \mathcal{L}\}$ is the power set of $\mathcal{L}$.

\begin{definition}
Given $r$-sets $\boldsymbol{f},\boldsymbol{g}$ and $\boldsymbol{h}\in {Q}(i)$
such that  $\boldsymbol{f}\neq \boldsymbol{h}$ and 
$\boldsymbol{g}\neq \boldsymbol{h}$, $(L,K)\in \mathcal{L}^2$, define the set
${R}_{\{\boldsymbol{f},\boldsymbol{h}\},L,k,\{\boldsymbol{g},\boldsymbol{h}\},K}(i)$
to be
\begin{align}
&{R}_{\{\boldsymbol{f},\boldsymbol{h}\},L,k,\{\boldsymbol{g},\boldsymbol{h}\},K}(i)\notag\\
&\hskip 20pt=\Bigl\{(L',K'):\ (L',K')\in  \Gamma_{\{\boldsymbol{f},\boldsymbol{h}\},\emptyset,L,k}\times {\Gamma}_{\{\boldsymbol{g},\boldsymbol{h}\},\emptyset,K,e_K-2}\text{ and }   L'\neq K'\Bigr\}.
\end{align}
\end{definition}

By adding constraint on  choosing $\boldsymbol{h}$, define
${R}^{+}_{\boldsymbol{f},L,k,\boldsymbol{g},K}(i)$ as
\begin{align}
{R}^{+}_{\boldsymbol{f},L,k,\boldsymbol{g},K}(i)&=\Bigl\{(L',K'):\  (L',K')\in  \bigcup_{\{\boldsymbol{h}\}=L'\cap K'\cap {Q}(i)\backslash \{\boldsymbol{f},\boldsymbol{g}\}}{R}_{\{\boldsymbol{f},\boldsymbol{h}\},L,k,\{\boldsymbol{g},\boldsymbol{h}\},K}(i)\Bigr\}. 
\end{align}
By further adding constraint on choosing $\boldsymbol{g}$, define ${R}^{++}_{\boldsymbol{f},L,k,K}(i)$
as
\begin{align}
{R}^{++}_{\boldsymbol{f},L,k,K}&=\Bigl\{(L',K'):\  (L',K')\in \bigcup_{\boldsymbol{g}\in L'\cap {Q}(i)}{R}^{+}_{\boldsymbol{f},L,k,\boldsymbol{g},K}(i)\Bigr\}.
\end{align}

\begin{remark}As the equations shown in~(5.7)-(5.9), note that $e_L, e_K\leqslant \ell$ 
for any $(L,K)\in \mathcal{L}^2$ and $|\mathcal{L}|=O(1)$,
then it follows that
\begin{align*}
\bigl|{R}^{+}_{\boldsymbol{f},L,k,\boldsymbol{g},K}(i)\bigr|\leqslant\ell^2\cdot \bigl|{R}_{\{\boldsymbol{f},\boldsymbol{h}\},L,k,\{\boldsymbol{g},\boldsymbol{h}\},K}(i)\bigr|
\end{align*}
and
\begin{align*}
\bigl|{R}^{++}_{\boldsymbol{f},L,k,K}(i)\bigr|\leqslant \ell\cdot \bigl|{R}^{+}_{\boldsymbol{f},L,k,\boldsymbol{g},K}(i)\bigr|.
\end{align*}
\end{remark}

\begin{lemma}
Assume that  the event $\mathcal{G}(i)\cap \mathcal{C}(i)$ holds.
With the above notation, it follows that 
\begin{align*}
|{R}_{\{\boldsymbol{f},\boldsymbol{h}\},L,k,\{\boldsymbol{g},\boldsymbol{h}\},K}(i)|&=O\Bigl( n^{ \alpha' +(r-1)(e_L-k)-r-1}(nt_{{M}})^k\Bigr),
\end{align*}
where $\alpha'\in (\alpha_0,\alpha)$.
\end{lemma}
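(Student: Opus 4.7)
The plan is to apply Lemma~5.1 to the joint $r$-graph formed by gluing $L'$ and $K'$ along their common edge $\boldsymbol{h}$, treating the unconstrained edges of $L$ via a separate extension count. Since $|\mathcal{L}| = O(1)$ and each $L, K \in \mathcal{L}$ is a linear cycle with at most $\ell$ edges, there are only $O(1)$ many isomorphism types for how $L$ and $K$ can overlap beyond $\boldsymbol{h}$; we fix one such type, with abstract union $H := L \cup K$, and sum the contributions at the end.

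For each $k$-subset $S \subseteq E(L) \setminus \{\boldsymbol{f}, \boldsymbol{h}\}$ designating which $k$ edges of $L$ are required to lie in $\mathbb{H}(i)$, set $\hat{L}_S := \{\boldsymbol{f}, \boldsymbol{h}\} \cup S$, $H' := \hat{L}_S \cup K \subseteq H$, and $\hat{H} := \{\boldsymbol{f}, \boldsymbol{g}, \boldsymbol{h}\}$ (the three $Q(i)$-edges fixed by the injection $\theta$). Then
\[
|R_{\{\boldsymbol{f},\boldsymbol{h}\},L,k,\{\boldsymbol{g},\boldsymbol{h}\},K}(i)| \leq O(1) \cdot |\Psi_{\theta, \hat{H}, H'}(i)| \cdot n^{v_L - v_{\hat{L}_S}},
\]
where the factor $n^{v_L - v_{\hat{L}_S}}$ counts extensions of each copy of $\hat{L}_S$ to a full copy of $L$. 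By the linearity of $L$ (Remark~3.2), $v_{\hat{L}_S} \geq (r-1)(k+2) + 1$ when $\hat{L}_S \subsetneq L$, so $n^{v_L - v_{\hat{L}_S}} \leq n^{(r-1)(e_L - k - 2) - 1}$; and $v_{\hat{L}_S} = v_L$ when $\hat{L}_S = L$ (which occurs iff $k = e_L - 2$).

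Applying Lemma~5.1 with $\epsilon = \alpha' - \alpha_0 > 0$, we have
\[
|\Psi_{\theta, \hat{H}, H'}(i)| \leq n^{\alpha' - \alpha_0} \cdot \max_{\hat{H} \subseteq \widetilde{H} \subseteq H'} n^{[v_{H'} - (r-1)e_{H'}] + [(r-1)e_{\widetilde{H}} - v_{\widetilde{H}}]} (nt)^{e_{H'} - e_{\widetilde{H}}}.
\]
By Remark~3.2, $v_{\widetilde{L}} \geq (r-1)e_{\widetilde{L}} + 1$ for every proper $\widetilde{L} \subsetneq L$ (and similarly for $K$); this controls the above max and yields a dominant contribution of $(nt)^k \leq (nt_M)^k$ via a $\widetilde{H}$ that omits the $k$ edges of $S$. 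Combining with the extension count, summing over the $O(1)$ many choices of $S$ and overlap types, and checking that the exponent difference $r - 2 + k/(\ell-1) + \alpha_0$ between the target and the raw bound is strictly positive for all $r \geq 3$, we arrive at the target bound $O(n^{\alpha' + (r-1)(e_L - k) - r - 1}(nt_M)^k)$.

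The main obstacle will be the case analysis for degenerate overlap types where $L' \cap K'$ strictly contains $\{\boldsymbol{h}\}$: such overlaps shrink both $v_{H'}$ and $e_{H'}$, shifting the optimal $\widetilde{H}$ in Lemma~5.1's maximum. The strict linearity inequality from Remark~3.2 applied to non-sub-cycle substructures is used to ensure that no degenerate overlap contributes more than the generic case, and the finite case analysis (bounded by the $O(1)$ overlap types) closes the argument.
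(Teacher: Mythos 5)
Your high-level plan is the same as the paper's: realize each pair $(L',K')$ as an embedded copy of a glued $r$-graph rooted at $\boldsymbol{f},\boldsymbol{g},\boldsymbol{h}$ and invoke Lemma~5.1. (The paper takes $H=\bar L\cup\bar K$ containing \emph{all} vertices of $L$ and $K$ but only the $k$ resp.\ $e_K-2$ edges forced into $\mathbb{H}(i)$, with $\hat H$ edgeless on $\boldsymbol{f}'\cup\boldsymbol{g}'\cup\boldsymbol{h}'$; you instead put $\boldsymbol{f},\boldsymbol{g},\boldsymbol{h}$ into $\hat H$ as edges and handle the free vertices of $L$ by a separate $n^{v_L-v_{\hat L_S}}$ extension count. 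These are organizational variants of the same argument.) However, there is a genuine gap at the heart of the proof: the control of $\max_{\hat H\subseteq\widetilde H\subseteq H'}$ in Lemma~5.1 is asserted, not proved. The sentence ``$v_{\widetilde L}\geqslant(r-1)e_{\widetilde L}+1$ for every proper $\widetilde L\subsetneq L$ \dots controls the above max'' does not work quantitatively. Tracing the exponents (as in the paper's (5.14)), one must show
\begin{align*}
\bigl(v_A-(r-1)e_A\bigr)+\bigl(v_B-(r-1)e_B\bigr)\geqslant 3r-1
\end{align*}
for the $L$-part $A$ and $K$-part $B$ of an arbitrary intermediate $\widetilde H$; the weak inequality from Remark~3.2 only gives $\geqslant 2$, which is hopeless. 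The paper's proof hinges on the stronger bounds $v_A-(r-1)e_A\geqslant 2r-2+\mathbbm{1}_{\{v_A\neq v_L\}}$ and $v_B-(r-1)e_B\geqslant 2r-2+\mathbbm{1}_{\{v_B\neq v_K\}}$, which exploit that $A$ contains the vertices of the two cycle edges $\boldsymbol{f}',\boldsymbol{h}'$ of $L$ without containing them as edges (similarly $\boldsymbol{g}',\boldsymbol{h}'$ for $B$), so each of $A$, $B$ is ``two edges short'' of a full segment of its cycle. You never extract this gain of $2(r-1)$ per side.

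A second, related omission: Remark~3.2 applies to subgraphs of a single $L\in\mathcal{L}$, whereas $\widetilde H$ ranges over subgraphs of the glued structure $\hat L_S\cup K$, including degenerate overlaps where $L'$ and $K'$ share edges or vertices beyond $\boldsymbol{h}$. The paper handles this by the explicit decomposition $A=\widetilde H\cap\bar L$, $B=\bar K\cap(\widetilde H\cup\bar L)$ together with the identities $e_A+e_B=e_{\bar L\cap\bar K}+e_{\widetilde H}$ and $v_A+v_B=v_{\bar L\cap\bar K}+v_{\widetilde H}$, which make the overlap bookkeeping exact. Your proposal defers all of this to ``a finite case analysis'' of overlap types; that is exactly the part of the argument that needs to be written down, and without the $2r-2$ inequalities above the case analysis cannot close. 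Finally, your identification of the ``dominant'' $\widetilde H$ (the one omitting the $k$ edges of $S$) is not correct as stated --- that choice carries an $n$-exponent penalty of $-(r-1)k$ and is far from extremal; the $(nt_M)^k$ factor in the target is obtained in the paper by the crude uniform bound $(nt)^{e_H-e_{\widetilde H}}\leqslant(nt_M)^{k+e_K-2}$ followed by absorbing $(nt_M)^{e_K-2}$ into $n^{\alpha'}$, not by locating a single dominant $\widetilde H$.
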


The proof of Lemma~5.7 is left to Section~5.3. 
We firstly use it 
to prove the equations~(4.13)-(4.15) in Claim~4.1.3 and the equation~(4.23) in Claim~4.1.6.

\begin{proof}[Proof of Claim~4.1.3.]\ 
For any $\boldsymbol{f}\in {Q}(i)$, $L\in \mathcal{L}$
and $L'\in {W}_{\boldsymbol{f},L,k}(i)$ with $0\leqslant k\leqslant e_L-2$, by the definitions of
 $\Upsilon_{\boldsymbol{f}}(i)$ in~(4.2), $\Psi_{L'}(i)$ in~(4.6),
$\Lambda_{\boldsymbol{f},L,k-1}(i)$ in~(4.9),
${R}^{+}_{\boldsymbol{f},L,k,\boldsymbol{g},K}(i)$ and ${R}^{++}_{\boldsymbol{f},L,k,K}(i)$
in~(5.8) and~(5.9), 
we have the parameters $|\Upsilon_{\boldsymbol{f}}(i)|$, $|\Psi_{L'}(i)|$ and
$\mathbbm{1}_{\{k\geqslant 1\}}\cdot|\Lambda_{\boldsymbol{f},L,k-1}(i)|$ in Claim~4.1.3
 satisfy
\begin{align*}
|\Upsilon_{\boldsymbol{f}}(i)|&=\sum_{(L, K)\in \mathcal{L}^2}|{R}^{+}_{\boldsymbol{f},L,e_L-2,\boldsymbol{f},K}(i)|,\\
|\Psi_{L'}(i)|&=\sum_{\substack{\boldsymbol{f},\boldsymbol{g}\in L'\cap {Q}(i)\\\ \boldsymbol{f}\neq \boldsymbol{g}}}
\sum_{(L,K)\in \mathcal{L}^2}|{R}^{+}_{\boldsymbol{f},L,e_L-2,\boldsymbol{g},K}(i)|,\\
\mathbbm{1}_{\{k\geqslant 1\}}\cdot|\Lambda_{\boldsymbol{f},L,k-1}(i)|&=
\mathbbm{1}_{\{k\geqslant 1\}}\cdot \sum_{K\in \mathcal{L}}|{R}^{++}_{\boldsymbol{f},L,k-1,K}(i)|.
\end{align*} 
By Remark~5.6 and Lemma~5.7, we have
\begin{align*}
|{R}^{+}_{\boldsymbol{f},L,e_L-2,\boldsymbol{f},K}(i)|&=O\bigl( n^{ \alpha' +r-3}(nt_{{M}})^k\bigr),\\
|{R}^{+}_{\boldsymbol{f},L,e_L-2,\boldsymbol{g},K}(i)|&=O\bigl( n^{ \alpha' +r-3}(nt_{{M}})^k\bigr),\\
\mathbbm{1}_{\{k\geqslant 1\}}\cdot|{R}^{++}_{\boldsymbol{f},L,k-1,K}(i)|&=O\bigl( n^{ \alpha' +(r-1)(e_L-k)-2}(nt_{{M}})^{k-1}\bigr).
\end{align*}
Note that $e_L, e_K\leqslant \ell$ 
 and $|\mathcal{L}|=O(1)$,
 we complete the proof of the equations (4.11)-(4.13) in Claim~4.1.3.
\end{proof}

\begin{proof}[Proof of (4.23) in Claim~4.1.6.]\
For any $\boldsymbol{f},\boldsymbol{g}\in {Q}(i)$, $L\in \mathcal{L}$
and  $0\leqslant k\leqslant e_L-2$, by the definitions of
$\Phi_{\boldsymbol{f},L,k,\boldsymbol{g}}(i)$ in Section~4.1.4 and
${R}^{+}_{\boldsymbol{f},L,k,\boldsymbol{g},K}(i)$
in~(5.8), 
we have $|\Phi_{\boldsymbol{f},L,k,\boldsymbol{g}}(i)|$ satisfies
\begin{align*}
|\Phi_{\boldsymbol{f},L,k,\boldsymbol{g}}(i)|&=\sum_{K\in \mathcal{L}}|{R}^{+}_{\boldsymbol{f},L,k,\boldsymbol{g},K}(i)|.
\end{align*} 
By Remark~5.6 and Lemma~5.7, 
\begin{align*}
|{R}^{+}_{\boldsymbol{f},L,k,\boldsymbol{g},K}(i)|=O\bigl( n^{ \alpha' +(r-1)(e_L-k)-r-1}(nt_{{M}})^k\bigr).
\end{align*}
Note that $e_K\leqslant \ell$ 
 and $|\mathcal{L}|=O(1)$, we complete the proof of the equation~(4.23) in Claim~4.1.6.
\end{proof}

\subsection{Lemma~5.4 and Lemma~5.7}

We will apply Lemma~5.1 to prove 
Lemma~5.4 and Lemma~5.7 below.

\begin{proof}[Proof of Lemma~5.4]\, For any $L\in\mathcal{L}$, 
let $H\subset L$ on vertex-set $V(H)=V(L)$ and $|H|=k$ with $0\leqslant k\leqslant e_L-2$.
Choose ${U}'\subseteq L\backslash H$ with $|{U}'|=|{U}|$ and a $m$-set
$\boldsymbol{f}'_m$ with
$\boldsymbol{f}'_m\subseteq\boldsymbol{f}'$ for some $\boldsymbol{f}'\in L\backslash (H\cup {U}')$.
Note that $\boldsymbol{f}'=\emptyset$ when  $m=0$. 

Define  $\hat{H}$ on the vertex-set $V(\hat{H})=V({U}')\cup \boldsymbol{f}'_m$
and $|\hat{H}|=0$. Let $\theta$ be an injection
 that maps the $r$-sets in ${U}'$ 
onto the $r$-sets in ${U}$, and $\boldsymbol{f}'_m$ onto  $\boldsymbol{f}_m$.
Since the number of ways to choose $H$, ${U}'$, $\boldsymbol{f}'_m$ 
and $\theta$ is finite, 
 we have $|\Gamma_{{U},\boldsymbol{f}_m,L,k}(i)|=O(|\Psi_{\theta,\hat{H},H}(i)|)$.
We are interested in applying Lemma~5.1 to prove an upper bound of $|\Psi_{\theta,\hat{H},H}(i)|$.

Consider any $\widetilde{H}$ with $\hat{H}\subseteq \widetilde{H}\subseteq H$. 
Note that  for $t\in[0,t_{{M}}]$ with $t_{{M}}$ in~(3.26), there exists $\alpha'\in (\alpha_0,\alpha)$
with $\alpha_0= \frac{\ell-2}{\ell-1}$ such that
\begin{align*}
n^\epsilon(nt)^{e_H-e_{\widetilde{H}}}\leqslant n^\epsilon(nt_M)^{\ell-2}=o(n^{\alpha'}).
\end{align*} 
By Lemma~5.1, it suffices to show
\begin{align}
&[v_H-(r-1)e_H]-[v_{\widetilde{H}}-(r-1)e_{\widetilde{H}}]\notag\\
&\leqslant (r-1)\cdot(e_L-k-|{U}|-\mathbbm{1}_{\{|{U}|+\mathbbm{1}_{\{m\geqslant 1\}}< e_L-k\}})-\mathbbm{1}_{\{m\geqslant 1\}}\cdot m\notag\\
&=\begin{cases}(r-1)\cdot(e_L-k-|{U}|-\mathbbm{1}_{\{|{U}|< e_L-k\}}),\ \text{if }m=0,\\
(r-1)\cdot(e_L-k-|{U}|-\mathbbm{1}_{\{|{U}|< e_L-k-1\}})-m,\ \text{otherwise},
\end{cases}
\end{align}
to complete our proof.

If $V(\widetilde{H})=V(H)$, then the trivial estimate 
\begin{align*}
[v_H-(r-1)e_H]-[v_{\widetilde{H}}-(r-1)e_{\widetilde{H}}]=(r-1)(e_{\widetilde{H}}-e_H)\leqslant 0
\end{align*}
implies the above inequality in~(5.10). 

Now suppose that
 $V(\widetilde{H})\subsetneq V(H)$. 
 Since  $H\subset L$ with
 $|H|=k$ for $0\leqslant k\leqslant e_L-2$, 
we have the number of edges of $L$ induced on $V(\widetilde{H})\cup \boldsymbol{f}'$
is at most $ \frac{v_{\widetilde{H}}+\mathbbm{1}_{\{m\geqslant 1\}}(r-m-1)}{r-1}$. 
In order to obtain the value of $e_{\widetilde{H}}$, 
note that ${U}'\cup \{\boldsymbol{f}'\}\subseteq L\backslash H$, then we delete
$|{U}|+\mathbbm{1}_{\{m\geqslant 1\}}$ from $ \frac{v_{\widetilde{H}}+\mathbbm{1}_{\{m\geqslant 1\}}(r-m-1)}{r-1}$.
If $|{U}|+\mathbbm{1}_{\{m\geqslant 1\}}<e_L-k$, 
then $\widetilde{H}$ consists of one linear path or several linear paths because $L$ is 
a  linear cycle of length at most $\ell$ by Remark~3.2. 
Then we further delete $\mathbbm{1}_{\{|{U}|+\mathbbm{1}_{\{m\geqslant 1\}}<e_L-k\}}$ 
from $e_{\widetilde{H}}$. 
Finally, we have the value of $e_{\widetilde{H}}$ satisfies
\begin{align*}
e_{\widetilde{H}}\leqslant \frac{v_{\widetilde{H}}+\mathbbm{1}_{\{m\geqslant 1\}}(r-m-1)}{r-1}-|{U}|-\mathbbm{1}_{\{m\geqslant 1\}}-\mathbbm{1}_{\{|{U}|+\mathbbm{1}_{\{m\geqslant 1\}}<e_L-k\}},
\end{align*}
which implies that
\begin{align*}
v_{\widetilde{H}}-(r-1)e_{\widetilde{H}}\geqslant (r-1)\cdot
(|{U}|+\mathbbm{1}_{\{|{U}|+\mathbbm{1}_{\{m\geqslant 1\}}<e_L-k\}})+\mathbbm{1}_{\{m\geqslant 1\}}\cdot m.
\end{align*}
Since  $v_L=(r-1)e_L$ because $L$ is 
a  linear cycle by Remark~3.2, we have $v_H=(r-1)e_L$.
Now $[v_H-(r-1)e_H]-[v_{\widetilde{H}}-(r-1)e_{\widetilde{H}}]$  satisfies the
inequality shown in~(5.10) by replacing  $e_H$ by $k$, and $v_{\widetilde{H}}-(r-1)e_{\widetilde{H}}$
by the above inequality.
\end{proof}

\vskip 0.3cm

\begin{proof}[Proof of Lemma~5.7]\, For any given $(L,K)\in \mathcal{L}^2$,
let $\bar{L}\subset L$ on vertex set $V(\bar{L})=V(L)$ and $|\bar{L}|=k$ with $0\leqslant k\leqslant e_L-2$,
and $\bar{K}\subset K$ with $V(\bar{K})=V(K)$ and $|\bar{K}|=e_K-2$,
such that there are $r$-sets $\boldsymbol{f}'$, $\boldsymbol{g}'$ and
$\boldsymbol{h}'$ satisfying $\bar{L}\cup \{\boldsymbol{f}',\boldsymbol{h}'\}\subseteq L$
and $\bar{K}\cup\{\boldsymbol{g}',\boldsymbol{h}'\}=K$,
where we allow $\boldsymbol{f}'=\boldsymbol{g}'$, while $\boldsymbol{f}'\neq \boldsymbol{h}'$ and 
$\boldsymbol{g}'\neq \boldsymbol{h}'$.

Consider $H=\bar{L}\cup \bar{K}$ with $V(H)=V(L)\cup V(K)$. 
Define $\hat{H}$  on vertex-set $V(\hat{H})=\boldsymbol{f}'\cup \boldsymbol{g}'\cup \boldsymbol{h}'$
and $|\hat{H}|=0$. Let $\theta$ be one injection with the property that 
$\boldsymbol{f}'$, $\boldsymbol{g}'$ and $\boldsymbol{h}'$ are mapped 
onto $\boldsymbol{f}$, $\boldsymbol{g}$ and $\boldsymbol{h}$.
Since  the number of ways to choose  $\bar{L}$, $\bar{K}$,  $\boldsymbol{f}'$, $\boldsymbol{g}'$, 
$\boldsymbol{h}'$ and $\theta$ is finite, we have
$|{R}_{\{\boldsymbol{f},\boldsymbol{h}\},L,k,\{\boldsymbol{g},\boldsymbol{h}\},K}(i)|
=O(|\Psi_{\theta,\hat{H},H}(i)|)$.
We are interested in applying Lemma~5.1 to show an upper bound 
of  $|\Psi_{\theta,\hat{H},H}(i)|$.

For any $\widetilde{H}$ with $\hat{H}\subseteq \widetilde{H}\subseteq H$, in Lemma~5.1,
note that for any
 $t\in[0,t_{{M}}]$
with $t_{{M}}$ in~(3.26), there exists $\alpha'\in (\alpha_0,\alpha)$ with $\alpha_0= \frac{\ell-2}{\ell-1}$
such that
\begin{align*}
n^\epsilon(nt)^{e_H-e_{\widetilde{H}}}\leqslant 
n^\epsilon(nt_M)^{k+e_K-2}=o\bigl(n^{\alpha'}(nt_{{M}})^{k}\bigr).
\end{align*}  
 It suffices to show
\begin{align}
[v_H-(r-1)e_H]-[v_{\widetilde{H}}-(r-1)e_{\widetilde{H}}]\leqslant (r-1)(e_L-k)-r-1
\end{align}
to complete our proof.

Let 
\begin{align}
A=\widetilde{H}\cap \bar{L}\quad\quad\text{and}\quad\quad B=\bar{K}\cap (\widetilde{H}\cup  \bar{L})
\end{align} with $V(A)=V(\widetilde{H})\cap V(\bar{L})$ and $V(B)=V(\bar{K})\cap (V(\widetilde{H})\cup  V(\bar{L}))$.
In fact, based on $H=\bar{L}\cup \bar{K}$ and $\widetilde{H}\subseteq H$, 
it follows that
\begin{align}
B=(\widetilde{H}\backslash \bar{L})\cup(\bar{K}\cap \bar{L}).
\end{align}
From the definitions of the set $A$ and $B$ in~(5.12) and~(5.13),
we have 
\begin{align*}
e_A+e_B&=e_{\bar{L}\cap \bar{K}}+e_{\widetilde{H}},\\
v_A+v_B&=v_{\bar{L}\cap \bar{K}}+v_{\widetilde{H}},
\end{align*}
 and then 
\begin{align}
&(v_H-v_{\widetilde{H}})-(r-1)(e_H-e_{\widetilde{H}})\notag\\
&=\bigl(v_{\bar{L}}+v_{\bar{K}}\bigr)-\bigl(v_A+v_B\bigr)-(r-1)\bigl(e_{\bar{L}}+e_{\bar{K}}\bigr)+(r-1)\bigl(e_A+e_B\bigr).
\end{align}

Note that $V(A)\subseteq V(L)$, we have $e_A\leqslant \frac{v_A-\mathbbm{1}_{\{v_A\neq v_L\}}}{r-1}$
by Remark~3.2.
Since $\boldsymbol{f}',\boldsymbol{h}'\subseteq V(A)$ while $\boldsymbol{f}'$,
$\boldsymbol{h}'\notin \bar{L}$, 
 we further have $e_A\leqslant \frac{v_A-\mathbbm{1}_{\{v_A\neq v_L\}}}{r-1}-2$,
which implies 
\begin{align}
v_A-(r-1)e_A\geqslant 2r-2+\mathbbm{1}_{\{v_A\neq v_L\}}.
\end{align} 
Similarly, since $V(B)\subseteq V(K)$, $\boldsymbol{g}'$, $\boldsymbol{h}'\subseteq V(B)$, 
$\boldsymbol{g}'$, $\boldsymbol{h}'\notin \bar{K}$,
we also have 
\begin{align}
v_B-(r-1)e_B\geqslant 2r-2+\mathbbm{1}_{\{v_B\neq v_{K}\}}.
\end{align}

Combined the equations shown in~(5.14)-(5.16) with
\begin{align*}
v_{\bar{L}}-(r-1)e_{\bar{L}}=(r-1)(e_L-k) \text{ and }v_{\bar{K}}=(r-1)(e_{\bar{K}}+2)
\end{align*}
because $L$, $K\in \mathcal{L}$, we have  the equation shown in~(5.11) holds
when $r\geqslant 3$ to complete the proof of Lemma~5.7.  
\end{proof}



\end{document}